\documentclass[reqno]{amsart}



\usepackage{color}
\usepackage[pdfstartpage=1,colorlinks=true,bookmarks=false,pdfstartview={FitH}]{hyperref}

\newcommand{\R} {\mathbb R}
\newcommand{\cuad}{{\sqcap\kern-.68em\sqcup}}

\newcommand{\dist}{{\rm dist}\, }

\newcommand{\be}{\begin{equation}}
\newcommand{\ee}{\end{equation}}

\usepackage{amsmath,amssymb}
\usepackage{mathtools}
\usepackage{verbatim}
\usepackage{comment}
\newcommand{\COMMENT}[1]{}
\DeclarePairedDelimiter\abs{\lvert}{\rvert}
\makeatletter
\let\oldabs\abs
\def\abs{\@ifstar{\oldabs}{\oldabs*}}
\newcommand{\eps}{\varepsilon}
\newcommand{\Beta}{\mathrm{B}}
\newcommand{\p}{\partial}
\newcommand{\Div}{\textnormal{div}\,}
\newcommand{\sLap}{(-\Delta)^s}
\newcommand{\sLapx}{(-\partial^2)^s}

\newcommand{\PV}{\textnormal{P.V.}\,}

\newcommand{\norm}[2][]{\left\|{#2}\right\|_{#1}}

\newcommand{\set}[1]{\left\{#1\right\}}

\newtheorem{lem}{Lemma}[section]
\newtheorem{cor}{Corollary}[section]
\newtheorem{thm}{Theorem}[section]

\newtheorem{prop}{Proposition}[section]

\newtheorem{remark}{Remark}[section]
\newcommand{\bremark}{\begin{remark} \em}
\newcommand{\eremark}{\end{remark} }

\numberwithin{equation}{section}

\definecolor{g2}{rgb}{0,0.6,0}
\definecolor{r2}{rgb}{0.8,0,0}

\vbadness=\maxdimen

\begin{document}

\title[Pyramidal traveling wave for fractional Allen-Cahn equations]{ Traveling wave solutions for bistable fractional Allen-Cahn equations with a pyramidal front }

\author{Hardy Chan}
\address{\noindent H. Chan - Department of Mathematics, University of British Columbia, Vancouver, B.C., Canada, V6T 1Z2.}
\email{hardy@math.ubc.ca}

\author{Juncheng Wei}
\address{\noindent J. Wei - Department of Mathematics, University of British Columbia, Vancouver, B.C., Canada, V6T 1Z2.} \email{jcwei@math.ubc.ca}

\thanks{J. Wei is partially supported by NSERC of Canada.}

\begin{abstract}
Using the method of sub-super-solution, we construct a solution of $(-\Delta)^su-cu_z-f(u)=0$ on $\R^3$ of pyramidal shape. Here $(-\Delta)^s$ is the fractional Laplacian of sub-critical order $1/2<s<1$ and $f$ is a bistable nonlinearity. Hence, the existence of a traveling wave solution for the parabolic fractional Allen-Cahn equation with pyramidal front is asserted.

The maximum of planar traveling wave solutions in various directions gives a sub-solution. A super-solution is roughly defined as the one-dimensional profile composed with the signed distance to a rescaled mollified pyramid. In the main estimate we use an expansion of the fractional Laplacian in the Fermi coordinates.
\end{abstract}

\maketitle

\section{Introduction}

\subsection{Traveling waves with local diffusion}
Consider the nonlinear diffusion equations
$$v_t-\Delta{v}-f(v)=0,\qquad\text{in}~\R^n.$$
The study of such equations is initiated by Kolmogorov, Petrovsky and Piskunow \cite{kolmogorov-petrovsky-piskunow} and Fisher \cite{fisher}.
Such reaction-diffusion equations have numerous applications in sciences
(\cite{fisher}, \cite{aronson-weinberger}, \cite{kolmogorov-petrovsky-piskunow}, \cite{allen-cahn}, \cite{fife2}, \cite{bebernes-eberly}, just to name a few)
as a model of genetics and pattern formation in biology, phase transition phenomena in physics, chemical reaction and combustion and many more.

It is natural to look for traveling wave solutions, that is, solutions of the form $v(x,t)=u(x',x_n-ct)$ where $x=(x',x_n)$ and $c$ is the speed. The equation for $u$ reads as
$$-\Delta{u}-cu_{x_n}-f(u)=0,\qquad\text{in}~\R^n.$$
Planar traveling fronts are obtained by further restricting $u(x)=U(x_n)$, resulting in an one-variable ODE
$$-U''-cU'-f(U)=0,\qquad\text{in}~\R.$$

For the KPP nonlinearity $f(t)=t(1-t)$ which is monostable, a planar front exists if $c>2\sqrt{f'(0)}>0$. In the case of cubic bistable nonlinearity $f(t)=-(t-t_0)(t-1)(t+1)$, the nonlinearity determines the speed uniquely by
$$c=\dfrac{\int_{-1}^1f(t)\,dt}{\int_{-1}^1U'(t)^2\,dt}.$$
These classical results are discussed in \cite{fife1}.

The study of non-planar traveling waves with a unbalanced bistable nonlinearity ($t_0\neq0$) is more interesting.
Ninomiya and Taniguchi \cite{ninomiya-taniguchi} proved the existence of a V-shaped traveling wave when $n=2$.
Hamel, Monneau and Roquejoffre \cite{hamel-monneau-roquejoffre2} obtained a higher dimensional analog with cylindrical symmetry.
Taniguchi \cite{taniguchi1} found asymptotically pyramidal waves.
He also constructed traveling waves whose conical front has a level set given by any convex compact set in any dimension $n$ \cite{taniguchi3} (see also \cite{kurokawa-taniguchi}).
Generalized traveling fronts, like curved and pulsating ones, are also considered, notably by Berestycki and Hamel \cite{berestycki-hamel}.

Qualitative properties such as stability and uniqueness of various nonlinearities have also been studied.
The readers are referred to \cite{matano-nara-taniguchi}, \cite{mellet-nolen-roquejoffre-ryzhik}, \cite{shen}, \cite{hamel-roques} and the references therein.

Hereafter we assume that $f\in{C^2}(\R)$ is a more general bistable nonlinearity, that is, there exists $t_0\in(-1,1)$ such that
\begin{equation}\label{bistable}\left\{\begin{array}{l}
f(\pm1)=f(t_0)=0\\
f(t)<0, \quad \forall t\in(-1,t_0)\\
f(t)>0, \quad \forall t\in(t_0,1)\\
f'(\pm1)<0.
\end{array}\right.\end{equation}

\subsection{Fractional Laplacian}
One way to define the fractional Laplacian is via integral operator. Let $0<s<1$ and $n\geq1$ be an integer. Consider the space of functions
$$C_s^{2}(\R^n)=\set{v\in{C}^{2}(\R^n):\int_{\R^n}\dfrac{\abs{v(x)}}{(1+\abs{x})^{n+2s}}\,dx<\infty}.$$
For any function $u\in{C_s^{2}}(\R^n)$, we have the equivalent definitions
\begin{equation*}\begin{split}
\sLap{u}(x)
&=C_{n,s}\PV\int_{\R^n}\!\dfrac{u(x)-u(x+\xi)}{\abs{\xi}^{n+2s}}\,d\xi\\
&=C_{n,s}\PV\int_{\R^n}\!\dfrac{u(x)-u(\xi)}{\abs{x-\xi}^{n+2s}}\,d\xi\\
&=C_{n,s}\int_{\R^n}\!\dfrac{2u(x)-u(x+\xi)-u(x-\xi)}{2\abs{\xi}^{n+2s}}\,d\xi\\
&=C_{n,s}\int_{\R^n}\!\dfrac{u(x)-u(x+\xi)+\chi_D(\xi)\nabla{u}(x)\cdot\xi}{\abs{\xi}^{n+2s}}\,d\xi
\end{split}\end{equation*}
where $D$ is any ball centered at the origin and
$$C_{n,s}=\left(\int_{\R^n}\dfrac{1-\cos(\zeta_1)}{\abs{\zeta}^{n+2s}}\,d\zeta\right)^{-1}=\dfrac{2^{2s}s\Gamma(\frac{n}{2}+s)}{\Gamma(1-s)\pi^{\frac{n}{2}}}.$$

We can also define it as a pseudo-differential operator with symbol $\abs{\xi}^{2s}$, that is, for any $u\in\mathcal{S}(\R^n)$, the Schwartz space of rapidly decaying functions,
$$\widehat{(-\Delta)^{s}u}(\xi)=\abs{\xi}^{2s}\hat{u}(\xi),\qquad\text{for}~\xi\in\R^n.$$
See, for instance, \cite{landkof}.

Caffarelli and Silvestre \cite{caffarelli-silvestre1} considered the localized extension problem
\begin{equation}\label{extension}\begin{cases}
\Div(y^{1-2s}\nabla{v(x,y)})=0,&(x,y)\in\R^n_+:=\R^n\times(0,\infty),\\
v(x,0)=u(x),&x\in\R^n,
\end{cases}\end{equation}
and proved that the fractional Laplacian is some normal derivative
$$(-\Delta)^{s}u(x)=-\dfrac{\Gamma(s)}{2^{1-2s}\Gamma(1-s)}\lim_{y\to0^+}y^{1-2s}v_y(x,y).$$
Hence, the fractional Laplacian is a Dirichlet-to-Neumann map.
The $s$-harmonic extension $v$ of $u$ can be recovered by the convolution $v(\cdot,y)=u\ast{P}_{n,s}(\cdot,y)$ where $P_{n,s}$ is the Poisson kernel
$$P_{n,s}(x,y)=\dfrac{\Gamma(\frac{n}{2}+s)}{\pi^{\frac{n}{2}}\Gamma(s)}\dfrac{y^{2s}}{\left(\abs{x}^2+y^2\right)^{\frac{n}{2}+s}}.$$

The fractional Laplacian can also be understood as the infinitesimal generator of a L\'evy process \cite{bertoin} and it arises in the areas of probability and mathematical finance.

Its mathematical aspects have been studied extensively by many authors, for instance
\cite{silvestre1}, \cite{caffarelli-silvestre1}, \cite{cabre-sire1}, \cite{palatucci-savin-valdinoci}, \cite{gonzalez}, \cite{savin-valdinoci1}, \cite{savin-valdinoci2} and \cite{brandle-colorado-depablo-sanchez}.

In appendix \ref{sect frac lap} we list some useful properties.
When $n=1$ let us also write $\sLap=\sLapx$.

\subsection{The one-dimensional profile}

Consider the equation

\begin{equation}\label{1D}\begin{cases}
\sLapx\Phi(\mu)-k\Phi'(\mu)-f(\Phi(\mu))=0,&\forall\mu\in\R\\
\Phi'(\mu)<0,&\forall\mu\in\R\\
\displaystyle\lim_{\mu\to\pm\infty}\Phi(\mu)=\mp1.
\end{cases}\end{equation}

Gui and Zhao \cite{gui-zhao} proved that

\begin{thm}[Existence of 1-dimensional profile]\label{1Dexist}
For any $s\in(0,1)$ and for any bistable nonlinearity $f\in C^2(\R)$ there exists a unique pair $(k,\Phi)$ such that (\ref{1D}) is satisfied. Moreover, $k>0$ and $\Phi(\mu)$, $\Phi'(\mu)$ decay algebraically as $\abs{\mu}\to\infty$:
$$1-\abs{\Phi(\mu)}=O\left(\abs{\mu}^{-2s}\right) \text{ as } \abs{\mu}\to\infty$$
and
$$0<-\Phi'(\mu)=O\left(\abs{\mu}^{-1-2s}\right) \text{ as } \abs{\mu}\to\infty.$$
\end{thm}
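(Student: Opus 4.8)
The plan is to realize $\Phi$ as the trace on $\set{y=0}$ of the Caffarelli--Silvestre extension $V(\mu,y)$, which solves $\Div(y^{1-2s}\nn V)=0$ in $\R\times(0,\infty)$ with the nonlinear Neumann condition $d_s\lim_{y\to0^+}(-y^{1-2s}V_y)=k\,\p_\mu\Phi+f(\Phi)$ on $\set{y=0}$, and to build it by a truncation--plus--passage--to--the--limit scheme in which the wave speed $k$ is treated as a hidden parameter, fixed only at the end through the normalization $\Phi(0)=t_0$. For each $k\in\R$ and $R>0$ I would first solve the equation on $(-R,R)$ with exterior datum $\Phi\equiv\pm1$ on $\set{\pm\mu\ge R}$. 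Since $f(\pm1)=0$, the constants $-1$ and $+1$ form an ordered pair of sub-- and super--solutions, so Perron's method for the extension (or a monotone iteration) produces a solution $\Phi_{R,k}$ with $-1<\Phi_{R,k}<1$; a sliding argument on the interval, using the strong maximum principle and Hopf lemma for $\Div(y^{1-2s}\nn\cdot)$, shows $\Phi_{R,k}'<0$.

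Next I would select $k$ and remove the truncation. The value $\Phi_{R,k}(0)$ depends continuously and, by comparison, monotonically on $k$, and for $R$ large, varying $k$ over $\R$ sweeps it across $t_0$ thanks to the bistable shape of $f$ (this is the nonlocal counterpart of pinning down the speed in the classical bistable problem); fix $k=k_R$ with $\Phi_{R,k_R}(0)=t_0$. Uniform bounds for $k_R$ and for $\Phi_{R,k_R}$ in $C^{0,1}_\loc$ --- from $-1<\Phi<1$, the equation, and the De Giorgi--Nash--Moser / Schauder theory for the degenerate extension --- let one send $R\to\infty$ and extract a nonincreasing solution $(k,\Phi)$ on all of $\R$ with $\Phi(0)=t_0$. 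The delicate point, and what I expect to be the main obstacle, is to upgrade $\Phi$ to the genuinely heteroclinic, strictly monotone profile with $\lim_{\mu\to\pm\infty}\Phi=\mp1$: one must exclude the possibility that $\Phi$ stabilizes at another zero of $f$, or plateaus at $t_0$, or fails to converge. I would handle this by translating, $\Phi(\cdot+\tau_j)$ with $\tau_j\to\pm\infty$, to an entire monotone solution $\Psi$ with constant one--sided limits, and use the nondegeneracy $f'(\pm1)<0$ (together with $f\ne0$ off $\set{-1,t_0,1}$) to force $\Psi\equiv\pm1$; a further sliding comparison rules out a $t_0$--plateau, so that $\lim_{\mu\to\pm\infty}\Phi=\mp1$ and $\Phi'<0$ everywhere.

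For the decay rates I would use barriers tuned to the fractional kernel. Near $+\infty$, $\psi:=1+\Phi$ solves a linear equation $\sLapx\psi-k\psi'+a(\mu)\psi=0$ with $a(\mu)\to-f'(-1)>0$; the mechanism forcing a polynomial tail is that the value $\approx2$ of $\psi$ near $-\infty$ feeds back through the kernel $\sim\abs{\xi}^{-1-2s}$ to create a source of size $\sim\abs{\mu}^{-2s}$ at $+\infty$, which, balanced against the positive zeroth--order term, gives $\psi\sim c\,\abs{\mu}^{-2s}$. Making this rigorous: for $C$ large and $\mu$ large, $C\abs{\mu}^{-2s}$ is a super--solution for $1-\abs{\Phi}$ and $C\abs{\mu}^{-1-2s}$ one for $-\Phi'$ (small multiples of these give sub--solutions), so the maximum principle yields $1-\abs{\Phi(\mu)}=O(\abs{\mu}^{-2s})$ and $0<-\Phi'(\mu)=O(\abs{\mu}^{-1-2s})$, and symmetrically at $-\infty$. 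In particular all the integrals below converge absolutely.

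Finally, uniqueness and the sign of $k$. Given two solutions $(k_1,\Phi_1)$ and $(k_2,\Phi_2)$, I would slide $\Phi_2(\cdot+\tau)$ until it first touches $\Phi_1$; the strong maximum principle then forces $\Phi_1\equiv\Phi_2(\cdot+\tau_0)$, hence $k_1=k_2$, the decay estimates and the nondegeneracy at $\pm1$ being exactly what legitimizes the comparison near infinity. For the sign, multiply the equation by $\Phi'$ and integrate over $\R$: since $\int_\R\sLapx\Phi\cdot\Phi'\,d\mu=0$ (the symbol $\abs{\xi}^{2s}i\xi$ is odd, and the decay makes the integration by parts rigorous), one obtains $k\int_\R(\Phi')^2\,d\mu=\int_{-1}^1f(t)\,dt$, so $k$ has the sign of $\int_{-1}^1f$; for the unbalanced, forward--moving nonlinearities in play this integral is positive, whence $k>0$.
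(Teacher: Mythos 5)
The first thing to say is that this paper does not prove Theorem \ref{1Dexist} at all: it is quoted verbatim from Gui--Zhao \cite{gui-zhao}, and the only related argument the authors supply is the comparison in Appendix \ref{sect 1Ddecay} for the decay of $\Phi''$. So your proposal can only be measured against the cited proof and against that appendix. Your overall architecture --- truncation on $(-R,R)$ with exterior data $\pm1$, monotone iteration between the ordered constant sub/super-solutions, a sliding argument for monotonicity, selection of $k$ by a normalization at the origin, passage to the limit, exclusion of spurious plateaus via translation and the nondegeneracy $f'(\pm1)<0$, uniqueness by sliding, and the identity $k\int_\R(\Phi')^2=\int_{-1}^1 f$ for the sign of $k$ --- is the standard route and is essentially what is done in \cite{gui-zhao}. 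Your observation that $k>0$ really requires $\int_{-1}^1f>0$ is correct; the theorem as stated in the paper suppresses this, and the formula for $c$ quoted in the introduction confirms your reading.

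The one step that does not work as written is the decay argument, and the defect is precisely the point the authors flag after Theorem \ref{1Dexist}. You claim that ``for $C$ large'' the functions $C\abs{\mu}^{-2s}$ and $C\abs{\mu}^{-1-2s}$ are super-solutions of the linearized equations for $1-\abs{\Phi}$ and $-\Phi'$. But the super-solution inequality $\sLapx w-kw'+aw\ge0$ is homogeneous in $w$, so enlarging $C$ changes nothing; the issue is that the nonlocal tail of $\sLapx w$ --- the contribution of the region where $w$ is of order one --- produces a \emph{negative} term of size $\norm[L^1]{w}\,\abs{\mu}^{-1-2s}$ (respectively $\norm[L^\infty]{w}\,\abs{\mu}^{-2s}$ for the first barrier), which is of exactly the same order in $\mu$ as the good term $aw$. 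The correct knob is not $C$ but a flattening parameter: one must use barriers of the form $(M+\abs{\mu})^{-1-2s}$ with $M$ large, so that $\norm[L^1]{w}=O(M^{-2s})$ is beaten by $a=-f'(\mp1)+o(1)$; equivalently --- and this is what \cite{gui-zhao} and Appendix \ref{sect 1Ddecay} actually do --- one compares with the explicit Cabr\'e--Sire layer $v_t$ and its derivative $v_t'$, which have exactly the rates $\abs{\mu}^{-2s}$ and $\abs{\mu}^{-1-2s}$ and satisfy known equations, so that no computation of $\sLapx$ of a truncated power is needed. That this is not a pedantic point is shown by the authors' remark that the expected rate $\Phi''=O(\abs{\mu}^{-2-2s})$ is \emph{not} accessible this way: for that rate the tail term $\norm[L^1]{w}\abs{\mu}^{-1-2s}$ strictly dominates $w(\mu)$ and no power barrier exists. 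Please repair the barrier step accordingly; the rest of the outline, including the convergence of $\int_\R\sLapx\Phi\cdot\Phi'\,d\mu$ (which does require the decay estimates and $s>1/2$ to make the Gagliardo seminorm finite), then goes through.
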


Note that here $\Phi$ is the negative of the profile stated in \cite{gui-zhao}. To fix the phase, we assume that $\Phi(0)=0$.

One may expect that $\Phi''(\mu)$ decays like $\abs{\mu}^{-2-2s}$ as in the almost-explicit example of Cabr\'e and Sire \cite{cabre-sire2} but it would not be as easy to prove because there is no known example of explicit positive function decaying at such rate and satisfying an equation involving the fractional Laplacian.

For our purpose, it is enough to have $\Phi''(\mu)=O\left(\abs{\mu}^{-1-2s}\right)$ as $\abs{\mu}\to\infty$. It is done by a comparison similar to the one in \cite{gui-zhao}. We postpone the proof to appendix \ref{sect 1Ddecay}.

\subsection{Traveling waves with nonlocal diffusion}

Nonlocal reaction-diffusion equations often gives a more accurate model by taking into account long-distance interactions.
Equations involving a convolution with various kernels have been studied, as in
\cite{demasi-gobron-presutti}, \cite{chen}, \cite{bates-chen-chmaj}, \cite{bates-fife-ren-wang}, \cite{wang}, \cite{garroni-muller1}, \cite{garroni-muller2} and \cite{cuitino-koslowski-ortiz}.

From now on let us focus on the case with fractional Laplacian. For $c>k$, we consider a three-dimensional nonlocal diffusion equation
\begin{equation}\label{3D}
\mathcal{L}[u]:=\sLap{u}-cu_z-f(u)=0,\qquad\text{in}~\R^3.
\end{equation}

We say that $v\in{C}(\R^3)\cap{L}^\infty(\R^3)$ is a {\em{sub-solution}} if $v=\max_j{v_j}$ for finitely many $v_j\in{C}^2(\R^3)\cap{L}^\infty(\R^3)$ satisfying $\mathcal{L}[v_j]\leq0$ for each $j$.
A {\em{super-solution}} is defined similarly.

In order to state our main result, let us define a pyramid in the sense of \cite{taniguchi1}. Let $m_*=\sqrt{c^2-k^2}/k>0$ and let $N\geq3$ be an integer. Let
$$\set{(a_j,b_j)\in\R^2\mid1\leq{j}\leq{N}}$$
be pairs of real numbers satisfying the following properties.
\begin{itemize}
\item $a_j^2+b_j^2=m_*^2$, for each $1\leq{j}\leq{N}$;
\item $a_{j}b_{j+1}-a_{j+1}b_{j}>0$, for each $1\leq{j}\leq{N}$, where we have set $a_{n+1}=a_1$ and $b_{n+1}=b_1$;
\item $(a_{j},b_{j})\neq(a_{j'},b_{j'})$ if $j\neq{j'}$.
\end{itemize}
For each $1\leq{j}\leq{N}$ we define the function $h_j(x,y)=a_{j}x+b_{j}y$ and we define
$$h(x,y)=\displaystyle\max_{1\leq{j}\leq{N}}h_j(x,y).$$
Let us call $\set{(x,y,z)\in\R^3\mid{z}=h(x,y)}$ a pyramid. We decompose $\R^2=\bigcup_{j=1}^{N}\Omega_j$, where
$$\Omega_{j}=\set{(x,y)\in\R^2\mid{h}(x,y)=h_j(x,y)}.$$
It is clear that $h_j(x,y)\geq0$ for $(x,y)\in\overline{\Omega_j}$ and hence $h(x,y)\geq0$ for all $(x,y)\in\R^2$. By the assumptions on $(a_j,b_j)$, we see that $\Omega_j$ are oriented counter-clockwise. The set of all edges of the pyramid is $\Gamma=\bigcup_{j=1}^N\Gamma^j$ where
$$\Gamma^j=\set{(x,y,z)\in\R^3\mid{z}=h_j(x,y)~\text{for}~(x,y)\in\p\Omega_j}.$$
Denote
$$\Gamma_R=\set{(x,y,z)\in\R^3\mid\dist((x,y,z),\Gamma)>R}.$$
The projection of $\Gamma$ on $\R^2\times\set{0}$ is identified as $E=\bigcup_{j=1}^{N}\p\Omega_j\subset\R^2$.

In Section \ref{sect subsol} we show that
\begin{equation}\label{subsol def}
v(x,y,z)=\Phi\left(\dfrac{k}{c}(z-h(x,y))\right).
\end{equation}
is a sub-solution of \eqref{3D}. In Sections \ref{sect mol pyr} and \ref{sect supersol}, we obtain a super-solution in the form
\begin{equation}\label{supersol def}
V(x,y,z)=\Phi\left(\dfrac{z-\alpha^{-1}\varphi(\alpha{x},\alpha{y})}{\sqrt{1+\abs{\nabla\varphi(\alpha{x},\alpha{y})}^2}}\right)+{\eps}S(\alpha{x},\alpha{y}).
\end{equation}
For the precise definition, see \eqref{def varphi}, \eqref{def S}, \eqref{param eps} and \eqref{param alpha}.

The main result is as follows.

\begin{thm}\label{main}
Given any bistable nonlinearity $f$ and any $c>k$, where $k>0$ is given in Theorem \ref{1Dexist}, there exists a solution $u$ of \eqref{3D} such that $v<u<V$ in $\R^3$ where $v$ and $V$ are defined by \eqref{subsol def} and \eqref{supersol def} respectively. In particular,
$$\lim_{R\to\infty}\sup_{(x,y,z)\in\Gamma_R}\,\abs{u(x,y,z)-v(x,y,z)}=0.$$
\end{thm}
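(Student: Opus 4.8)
The plan is to run the classical sub--super-solution (monotone iteration) scheme adapted to the nonlocal, traveling-wave operator $\mathcal{L}$. First I would verify the two inputs that make the scheme work: that $v\leq V$ in $\R^3$, and that $v$, $V$ are genuine sub- and super-solutions in the sense defined above (so that $\mathcal L[v_j]\le 0$ for each planar piece $v_j$ and $\mathcal L[V]\ge 0$ in the viscosity/distributional sense, with the correction term $\eps S$ chosen precisely to absorb the error coming from mollifying the pyramid). The ordering $v\le V$ should follow from the monotonicity of $\Phi$ together with a geometric comparison between the exact signed ``height'' $\tfrac{k}{c}(z-h(x,y))$ and the mollified, rescaled argument $\bigl(z-\alpha^{-1}\varphi(\alpha x,\alpha y)\bigr)/\sqrt{1+\abs{\nabla\varphi}^2}$, using that $\varphi$ lies above the pyramid and that $\eps S\ge 0$; the quantitative decay estimates on $\Phi,\Phi',\Phi''$ from Theorem \ref{1Dexist} will be needed to keep all remainders under control.

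Next I would set up the iteration. Fix a large constant $M>0$ with $M\geq \sup_{|t|\le 1}|f'(t)|$ so that $t\mapsto f(t)+Mt$ is nondecreasing on $[-1,1]$, and rewrite \eqref{3D} as $\sLap u - c u_z + M u = f(u)+Mu=:F(u)$. Define $u_0=V$ and inductively let $u_{n+1}$ solve the linear problem $\sLap u_{n+1} - c\,\partial_z u_{n+1} + M u_{n+1} = F(u_n)$ on $\R^3$; solvability and the maximum principle for this linear nonlocal operator (invoking the extension characterization of $\sLap$ from \eqref{extension} if a PDE formulation is more convenient, or working directly with the integral kernel) give a well-defined, bounded $u_{n+1}$. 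The monotonicity of $F$ on $[-1,1]$ plus the comparison principle yields the chain $v\leq \dots\leq u_{n+1}\leq u_n\leq\dots\leq V$, so $u:=\lim_{n\to\infty}u_n$ exists pointwise and is squeezed between $v$ and $V$. Standard elliptic (here, nonlocal) regularity upgrades the convergence so that $u$ is a classical solution of \eqref{3D}; strict inequality $v<u<V$ then comes from the strong maximum principle, since $u$ cannot touch $v$ or $V$ without coinciding with one of them, which is excluded because $v$ and $V$ are not themselves solutions.

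Finally, the asymptotic statement $\lim_{R\to\infty}\sup_{\Gamma_R}\abs{u-v}=0$ is a consequence of the squeeze $v<u<V$ together with the fact that $V-v\to 0$ uniformly away from the edge set $\Gamma$: far from $\Gamma$ the mollified pyramid $\alpha^{-1}\varphi(\alpha\cdot)$ agrees with $h$ up to an error that the rescaling forces to vanish, $\nabla\varphi\to$ the relevant planar slope so the denominator $\sqrt{1+\abs{\nabla\varphi}^2}\to c/k$, and $\eps S\to 0$; hence $V-v\to 0$ there and $u-v$ is trapped between $0$ and this quantity. I expect the genuine difficulty to be none of the above soft steps but rather the verification that $V$ is a super-solution --- i.e.\ the ``main estimate'' advertised in the abstract: one must compute $\sLap V$ via the expansion of the fractional Laplacian in Fermi (normal) coordinates around the mollified pyramid, isolate the leading term $\sLapx\Phi$, and show that the curvature and geometric remainders --- which are largest near the edges $\Gamma$ and decay slowly because $\Phi''$ only decays like $\abs{\mu}^{-1-2s}$ --- are dominated by the favorable sign produced by the linearized operator acting on $\eps S$. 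Choosing $\alpha$ small and $\eps$ appropriately (per \eqref{param eps}, \eqref{param alpha}) to close this inequality is the crux; the rest is the standard machinery above.
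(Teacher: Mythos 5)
Your proposal is correct and follows essentially the same route as the paper: monotone iteration on the linearized operator $\sLap u - cu_z + Ku$ with $K$ large enough to make $f(u)+Ku$ monotone, comparison via the strong maximum principle, Schauder estimates plus Arzel\`a--Ascoli to pass to the limit, and the squeeze $v<u<V$ combined with $V-v\to0$ on $\Gamma_R$ for the asymptotics. The only (immaterial) difference is that you iterate downward from $V$ while the paper iterates upward from $w_0=v$, and you correctly identify that the real work lies in the super-solution verification (Propositions \ref{R} and \ref{supersol}), which the paper likewise carries out in the preceding sections rather than inside the proof of the theorem itself.
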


\medskip

\section{Motivation}
\label{sect motiv}

It is worthwhile to sketch the idea in \cite{taniguchi1} for the standard Laplacian case $s=1$.

Suppose there exists a one-dimensional solution $\Phi(\mu)$ of
$$-\Phi''(\mu)-k\Phi'(\mu)-f(\Phi(\mu))=0.$$
Let $\rho:\R^2\to(0,1]$ be a smooth radial mollifier satisfying $\int\rho=1$ and decaying exponentially at infinity, and $h(x,y)=(\sqrt{2}k)^{-1}\sqrt{c^2-k^2}(\abs{x}+\abs{y})$ be a square pyramid.
Let $\varphi=\rho\ast{h}$ be its mollification and $S(x,y)=c\left/\sqrt{1+\abs{\nabla\varphi(x,y)}^2}\right.-k$ be an auxiliary function.

It is easy to check that $v(x,y,z)=\Phi\left((k/c)(z-h(x,y))\right)$, as a maximum of solutions, is a sub-solution of
$$-\Delta{u}-cu_z-f(u)=0.$$
For $\alpha,\eps\in(0,1)$, define
$$V(x,y,z)=\Phi\left(\hat\mu\right)+\eps{S}(\alpha{x},\alpha{y}),$$
where
$$\hat\mu=\dfrac{z-\alpha^{-1}\varphi(\alpha{x},\alpha{y})}{\sqrt{1+\abs{\nabla\varphi(\alpha{x},\alpha{y})}^2}}.$$
Introducing the rescaled function $\Phi_\alpha(\mu)=\Phi(\alpha^{-1}\mu)$, we can also write
$$V(x,y,z)=\Phi_\alpha\left(\bar\mu(\alpha{x},\alpha{y},\alpha{z})\right)+\eps{S}(\alpha{x},\alpha{y}),$$
where
$$\bar\mu(x,y,z)=\dfrac{z-\varphi(x,y)}{\sqrt{1+\abs{\nabla\varphi(x,y)}^2}}.$$
$V$ will be a super-solution if $\alpha$ and $\eps$ are small. Indeed,
\begin{equation*}\begin{split}
-\Delta V
&=-\Phi''(\hat\mu)+\alpha{R}\\
&=k\Phi'(\hat\mu)+f(\Phi(\hat\mu))+\alpha{R}\\
-cV_z
&=-\dfrac{c}{\sqrt{1+\abs{\nabla\varphi(\alpha{x},\alpha{y})}^2}}\Phi'(\hat\mu)
\end{split}\end{equation*}
where $R=R(\alpha{x},\alpha{y};\hat\mu,\alpha,\eps)$ is bounded and each of its terms contains a second or third order derivative of $\varphi$. Then we have
\begin{equation*}\begin{split}
&\quad\;\mathcal{L}[V]\\
&=-\Delta{V}-cV_z-f(V)\\
&=S(\alpha{x},\alpha{y})\left(-\Phi'(\hat\mu)-\eps\int_0^1\!f'(\Phi(\hat\mu)+t\eps{S})\,dt+\alpha\dfrac{R(\alpha{x},\alpha{y})}{S(\alpha{x},\alpha{y})}\right)
\end{split}\end{equation*}
As $R(\alpha{x},\alpha{y})$ decays at an (exponential) rate not lower than $S(\alpha{x},\alpha{y})$ as $\abs{x},\abs{y}\to\infty$, the last term is bounded. By choosing $\alpha\ll\eps\ll1$ small, we are left with the main term $-\Phi'(\hat\mu)$ or $-\int_0^1f'\,dt$, depending on the magnitude of $\hat\mu$, which is positive.

For $1/2<s<1$, we cannot compute the Laplacian pointwisely using the chain rule but we can still arrange $\mathcal{L}[V]$, in terms of the difference $\sLap(\Phi(\hat\mu))-(\sLapx\Phi)(\hat\mu)$, as
\begin{equation*}\begin{split}
\sLap{V}=k\Phi'(\hat\mu)+f(\Phi(\hat\mu))+\tilde{R}
\end{split}\end{equation*}
where the ``remainder''
\begin{equation*}\begin{split}
\tilde{R}
&=\tilde{R}(\alpha{x},\alpha{y};\hat\mu,\alpha,\eps)\\
&=\sLap(\Phi(\hat\mu))-(\sLapx\Phi)(\hat\mu)+\eps\sLap(S(\alpha{x},\alpha{y}))
\end{split}\end{equation*}
is now a non-local term. We still have
$${\mathcal{L}}[V]=S(\alpha{x},\alpha{y})\left(-\Phi'(\hat\mu)-\eps\int_0^1\!f'(\Phi(\hat\mu)+t\eps{S})\,dt+\dfrac{\tilde{R}(\alpha{x},\alpha{y})}{S(\alpha{x},\alpha{y})}\right).$$

In terms of the rescaled one-dimensional solution $\Phi_\alpha$, by the homogeneity of the fractional Laplacian (see Lemma \ref{homo}), we have $\tilde{R}=\alpha^{2s}(R_1+R_2)$ where
\begin{equation*}\begin{split}
R_1
&=R_1(x,y,z;\alpha)\\
&=\sLap(\Phi_\alpha(\bar\mu(\alpha{x},\alpha{y},\alpha{z})))-(\sLapx\Phi_\alpha)(\bar\mu(\alpha{x},\alpha{y},\alpha{z}))\\
R_2
&=R_2(x,y;\alpha,\eps)\\
&=\eps(\sLap{S})(\alpha{x},\alpha{y}).
\end{split}\end{equation*}
It remains to show that $R_1,R_2=o\left(\alpha^{-2s}\right)$ as $\alpha\to0$, uniformly in $(x,y,z)\in\R^3$.
This will be done in sections \ref{sect mol pyr} and \ref{sect supersol}.

\begin{remark}[On the sub-criticality of $s$]
Since $\sLapx{S}$ cannot decay any faster than $\abs{(x,y)}^{-2s}$ as $\dist((x,y),E)\to\infty$ by its non-local nature, this argument will work out only if $S$ has an algebraic decay.
Hence, instead of an exponentially small mollifier, we must choose $\rho(x,y)=\Omega\left(\abs{(x,y)}^{-1-2s}\right)$.
On the other hand, in order that $\varphi$ is well defined, it is necessary to take $\rho(x,y)=O\left(\abs{(x,y)}^{-2}\right)$.
This forces $-1-2s<-2$, or $s>1/2$.
\end{remark}

\medskip
\section{The sub-solution}
\label{sect subsol}

We show that $v$ given by \eqref{subsol def} is a sub-solution.

\begin{prop}\label{subsol}
$v$ is a sub-solution of equation \eqref{3D}.
\end{prop}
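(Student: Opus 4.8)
The plan is to realize $v$ explicitly as a maximum of planar traveling fronts, each of which solves \eqref{3D} exactly, so that the definition of a sub-solution applies directly. For $1\le j\le N$ I would set
$$v_j(x,y,z)=\Phi\left(\frac{k}{c}\bigl(z-h_j(x,y)\bigr)\right),\qquad h_j(x,y)=a_jx+b_jy.$$
Since $\Phi$ is strictly decreasing by \eqref{1D} and $k/c>0$, and since $h=\max_j h_j$, one has $z-h(x,y)=\min_j\bigl(z-h_j(x,y)\bigr)$, and composing the minimum with the decreasing function $\Phi$ turns it into a maximum:
$$v(x,y,z)=\Phi\Bigl(\tfrac{k}{c}\min_j(z-h_j)\Bigr)=\max_{1\le j\le N}v_j(x,y,z).$$
Each $v_j$ inherits from $\Phi\in C^2(\R)$ and $|\Phi|\le1$ the regularity $v_j\in C^2(\RRN)\cap L^\infty(\RRN)$, and the uniform bound $|v_j|\le1$ also places $v_j$ in the class on which $\sLap$ is defined pointwise; likewise $v=\max_j v_j$ is continuous and bounded. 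By the definition of a sub-solution it therefore remains only to check $\mathcal{L}[v_j]\le0$ for each fixed $j$.

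The crucial observation is that $v_j$ is a genuinely one-dimensional profile along a \emph{unit} direction. Writing $\mu_j=\tfrac{k}{c}(z-a_jx-b_jy)=\mathbf{n}_j\cdot(x,y,z)$ with $\mathbf{n}_j=\tfrac{k}{c}(-a_j,-b_j,1)$, the constraint $a_j^2+b_j^2=m_*^2$ together with $m_*^2=(c^2-k^2)/k^2$ gives
$$|\mathbf{n}_j|^2=\frac{k^2}{c^2}\bigl(a_j^2+b_j^2+1\bigr)=\frac{k^2}{c^2}\bigl(m_*^2+1\bigr)=\frac{k^2}{c^2}\cdot\frac{c^2}{k^2}=1.$$
Hence, by the dimensional reduction property of the fractional Laplacian recorded in Appendix \ref{sect frac lap} — namely $(\sLap w)(X)=(\sLapx\phi)(\mathbf{n}\cdot X)$ whenever $w(X)=\phi(\mathbf{n}\cdot X)$ with $|\mathbf{n}|=1$, which follows from the kernel representation after integrating out the directions orthogonal to $\mathbf{n}$ — I get $\sLap v_j=(\sLapx\Phi)(\mu_j)$ on all of $\RRN$. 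Combining this with $-c\,(v_j)_z=-k\Phi'(\mu_j)$ yields
$$\mathcal{L}[v_j]=\sLap v_j-c\,(v_j)_z-f(v_j)=(\sLapx\Phi)(\mu_j)-k\Phi'(\mu_j)-f(\Phi(\mu_j))=0$$
by \eqref{1D}. Thus $\mathcal{L}[v_j]=0\le0$ for every $j$, so $v=\max_j v_j$ is a sub-solution of \eqref{3D}.

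There is no serious obstacle: the only real content is the identity $m_*^2+1=c^2/k^2$, built into the definition of $m_*$, which makes the slope vector $\mathbf{n}_j$ have unit length and lets the one-dimensional equation \eqref{1D} be applied verbatim. The sole ingredient used as a black box, rather than computed on the spot, is the reduction formula for $\sLap$ under composition with an affine map of unit slope; everything else is bookkeeping about continuity, the uniform bound $|\Phi|\le 1$, and the monotonicity of $\Phi$.
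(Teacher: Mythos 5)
Your proposal is correct and follows essentially the same route as the paper: write $v=\max_j v_j$ with $v_j=\Phi(\tfrac{k}{c}(z-h_j))$, use the chain rule for the fractional Laplacian under a linear map together with the identity $\tfrac{k^2}{c^2}(1+a_j^2+b_j^2)=1$ to reduce $\sLap v_j$ to $\sLapx\Phi$, and conclude $\mathcal{L}[v_j]=0$ from \eqref{1D}. The only cosmetic difference is that you phrase the reduction via a unit normal vector $\mathbf{n}_j$ while the paper invokes its Lemma \ref{chain} with the scaling factor $\bigl(\tfrac{k^2}{c^2}(1+a_j^2+b_j^2)\bigr)^s=1$; these are the same computation.
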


\begin{proof}
Let us define, for each $j=1,\dots,N$,
\begin{equation}\label{vj}
v_j(x,y,z)=\Phi\left(\dfrac{k}{c}(z-h_j(x,y))\right)=\Phi\left(\dfrac{k}{c}(z-a_j{x}-b_j{y})\right).
\end{equation}
Since $\Phi'<0$, we see that $v=\displaystyle\max_{1\leq{j}\leq{N}}v_j$.

By Lemma \ref{chain},
\begin{equation*}\begin{split}
\sLap{v_j}(x,y,z)
&=\left(\left(\dfrac{k}{c}\right)^2(1+a_j^2+b_j^2)\right)^s\sLapx\Phi\left(\dfrac{k}{c}(z-a_j{x}-b_j{y})\right)\\
&=\sLapx\Phi\left(\dfrac{k}{c}(z-a_{j}x-b_{j}y)\right).
\end{split}\end{equation*}
By \eqref{1D}, we have
$$\mathcal{L}[v_j]={\mathcal{L}}\left(\Phi\left(\dfrac{k}{c}(z-a_{j}x-b_{j}y)\right)\right)=0.$$
By definition, $v$ is a sub-solution of \eqref{3D}.
\end{proof}

\medskip
\section{The mollified pyramid and an auxiliary function}
\label{sect mol pyr}
Most of the materials in this section is technical and is a variation of those in \cite{taniguchi1}.

We define a radial mollifier $\rho\in{C}^{\infty}(\R^3)$ by $\rho(x,y)=\tilde\rho\left(\sqrt{x^2+y^2}\right)$, where $\tilde\rho\in C^\infty([0,\infty))$ satisfies the following properties:
\begin{itemize}
\item $0<\tilde\rho(r)\leq1$ and $\tilde\rho'(r)\leq0$ for $r>0$,
\item $\displaystyle2\pi\int_{0}^\infty\!r\tilde\rho(r)\,dr=1$,
\item $\tilde\rho(r)=1$ for $0\leq{r}\leq\tilde{r_0}\ll1$,
\item $\tilde\rho(r)=\tilde\rho_{0}r^{-{2s}-2}$ for $r\geq r_0\gg2$, where $\tilde\rho_{0}>0$ is chosen such that
    \begin{equation}\label{rhoconst}
    \dfrac{\tilde\rho_0}{{2s}}\Beta\left(\dfrac12,\dfrac12+s\right)=1
    \end{equation}
    and $r_0$ satisfies
    \begin{equation}\label{r0}
    {2s}(m_*^2+2)(2r_0)^{-{2s}}<1.
    \end{equation}
\end{itemize}
Define
\begin{equation}\label{def varphi}
\varphi(x,y)=\rho\ast{h}(x,y).
\end{equation}
We call $z=\varphi(x,y)$ a mollified pyramid. Define also an auxiliary function
\begin{equation}\label{def S}\begin{split}
S(x,y)
&=\dfrac{c}{\sqrt{1+\abs{\nabla\varphi(x,y)}^2}}-k\\
&=\dfrac{m_*^2-\abs{\nabla\varphi}^2}{\sqrt{1+\abs{\nabla\varphi(x,y)}^2}\left(c+k\sqrt{1+\abs{\nabla\varphi(x,y)}^2}\right)}.
\end{split}\end{equation}

By direct computation, we have

\begin{lem}\label{varphi1}
For any integers $i_1\geq0$ and $i_2\geq0$, with $i_1+i_2\leq3$,
$$\sup_{(x,y)\in\R^2}\abs{\p_x^{i_1}\p_y^{i_2}\varphi(x,y)}<\infty.$$
For all $(x,y)\in\R^2$, we have
$$h(x,y)<\varphi(x)\leq{h}(x,y)+2\pi{m_*}\int_0^\infty\!r^2\tilde\psi(r)\,dr$$ and $\abs{\nabla\varphi(x,y)}<m_*$. Hence, $0<S(x,y)\leq{c-k}$.
\end{lem}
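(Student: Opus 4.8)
The plan is to exploit that $\varphi = \rho \ast h$ and that $h$ is Lipschitz, piecewise linear, with bounded gradient. For the derivative bounds, I would move all derivatives onto $\rho$: since $h$ has at most linear growth and $\rho, \nabla\rho, \nabla^2\rho, \nabla^3\rho$ are all integrable (here the tail $\tilde\rho(r) = \tilde\rho_0 r^{-2s-2}$ gives $\rho \in L^1$ because $-2s-2 < -2$, and each differentiation improves decay by one power of $r$), we get $\p_x^{i_1}\p_y^{i_2}\varphi = (\p_x^{i_1}\p_y^{i_2}\rho)\ast h$ for $i_1+i_2 \geq 1$, and the convolution of an $L^1$ function against a function of linear growth is finite and locally bounded; translation invariance of the construction (the pyramid is globally Lipschitz) upgrades this to a uniform bound. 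The case $i_1 = i_2 = 0$ requires the two-sided estimate on $\varphi$ itself, which I address next.

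For the two-sided bound on $\varphi$, the key point is convexity of $h$: since $h = \max_j h_j$ is convex and $\rho$ is a probability density with mean zero (radial), Jensen's inequality gives $\varphi(x,y) = \int \rho(\xi) h((x,y)-\xi)\,d\xi \geq h(x,y)$, and strict positivity of $\rho$ makes this strict. For the upper bound I would use the Lipschitz estimate $h((x,y)-\xi) \leq h(x,y) + m_* |\xi|$ — valid because each $h_j$ has gradient of length $m_*$, so $h$ is $m_*$-Lipschitz — and integrate: $\varphi(x,y) \leq h(x,y) + m_* \int_{\R^2} |\xi| \rho(\xi)\,d\xi = h(x,y) + 2\pi m_* \int_0^\infty r^2 \tilde\rho(r)\,dr$. (I note the statement writes $\tilde\psi$; this should read $\tilde\rho$, and the integral converges since $r^2 \tilde\rho(r) \sim \tilde\rho_0 r^{-2s}$ is integrable near infinity as $2s > 1$.)

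For $|\nabla\varphi| < m_*$: write $\nabla\varphi = \rho \ast \nabla h$ wherever $\nabla h$ exists (off the measure-zero set $E$), so $\nabla\varphi(x,y) = \int \rho(\xi)\nabla h((x,y)-\xi)\,d\xi$ is an average of the vectors $(a_j, b_j)$, each of length exactly $m_*$; since $\rho > 0$ and the $(a_j,b_j)$ are not all equal (there are $N \geq 3$ distinct ones), the average lies strictly inside the disk of radius $m_*$, giving $|\nabla\varphi| < m_*$. Finally, $0 < S \leq c - k$ is immediate from the second displayed formula for $S$ in \eqref{def S}: the numerator $m_*^2 - |\nabla\varphi|^2$ is positive (just shown) and $\leq m_*^2 = c^2 - k^2$, and the denominator is $\geq c + k$, so $S \leq (c^2-k^2)/(c+k) = c - k$.

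The main obstacle is essentially bookkeeping rather than depth: one must be careful that differentiating the convolution is justified (dominated convergence, using that $\nabla^k \rho$ for $k \leq 3$ decays like $r^{-2s-2-k}$, comfortably integrable against the linear growth of $h$) and that the Jensen/Lipschitz arguments are applied on the correct side. No delicate cancellation or nonlocal estimate is needed here — the genuinely hard analysis is deferred to the later sections controlling $R_1$ and $R_2$.
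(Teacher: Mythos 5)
Your proposal is correct and follows essentially the same route as the paper, which simply defers to Taniguchi's argument (mollification of a Lipschitz, piecewise-linear, convex $h$) after recording the decay of the derivatives of $\rho$; your use of Jensen's inequality for the lower bound is a cosmetic variant of the paper's observation that $h_j=\rho\ast h_j<\rho\ast h$ for each $j$. Two small points need tightening. First, the convolution of an $L^1$ kernel against a function of linear growth is \emph{not} uniformly bounded in general (it can grow linearly); the uniform bound on $(\p_x^{i_1}\p_y^{i_2}\rho)\ast h$ for $i_1+i_2\ge1$ requires either the cancellation $\int_{\R^2}\p_x^{i_1}\p_y^{i_2}\rho=0$ combined with the Lipschitz bound $\abs{h(x-\xi,y-\eta)-h(x,y)}\le m_*\abs{(\xi,\eta)}$ and the integrability of $\abs{(\xi,\eta)}\,\abs{\p^{i}\rho}$, or, more simply, putting one derivative on $h$ and using $\nabla h\in L^\infty$ with $\p^{i-1}\rho\in L^1$. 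Second, $m_*^2=(c^2-k^2)/k^2$, not $c^2-k^2$, so your derivation of $S\le c-k$ from the second formula in \eqref{def S} is off by a factor of $k^2$; the bound follows immediately from the first formula, $S=c/\sqrt{1+\abs{\nabla\varphi}^2}-k\le c-k$.
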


\begin{proof}
The proof can be found in \cite{taniguchi1}, with a slight variation that there is a constant $C_{\tilde\rho}$ depending only on $\tilde\rho$ such that
$$\abs{\p_x^{i_1}\p_y^{i_2}\rho(x,y)}\leq{C_{\tilde\rho}}(x^2+y^2)^{-\frac{i_1+i_2}{2}}\rho(x,y),\quad\text{for}~x^2+y^2\geq{r_0^2}$$
and
$$\abs{\p_x^{i_1}\p_y^{i_2}\rho(x,y)}\leq{C_{\tilde\rho}}\quad\text{for~all}~(x,y)\in\R^2.$$

\COMMENT{}
\end{proof}

In the rest of this section, we study the behavior of $\varphi(x,y)-h(x,y)$ and $S(x,y)$ as well as their derivatives. It turns out that both of them depend on the distance from the edge of the pyramid.

The behavior of these functions of interest can be expressed using a ``mollified negative part''. Write $x_+=\max\set{x,0}$ and $x_-=\max\set{-x,0}$. Define $P:[0,\infty)\to(0,\infty)$ by
\begin{equation*}\begin{split}
P(x)
&=\int_{\R^2}\!\rho(x',y')(x-x')_{-}\,dx'dy'\\
&=-\int_{\R^2}\!\rho(x',y')(x'-x)_{+}\,dx'dy'\\
&=-\int_x^\infty\!\left(\int_{-\infty}^\infty\!\rho(x',y')\,dy'\right)(x-x')\,dx'.
\end{split}\end{equation*}

Let us state the properties of $P$.
\begin{lem}\label{P}
$P$ is in the class ${C}^3([0,\infty))$. Let $0\leq{i}\leq3$. There exists a constant $C_P=C_P(s,\tilde\rho)$ such that $\norm[C^3([0,\infty))]{P}\leq{C_P}$ and
$$C_P^{-1}(1+x)^{-{2s}+1-i}\leq\abs{P^{(i)}(x)}\leq{C_P}(1+x)^{-{2s}+1-i}.$$
Moreover, for $x>0$ we have $(-1)^{i}P^{(i)}(x)>0$ and if $x>{r_0}$, then
\begin{equation*}\begin{split}
P(x)
&=\dfrac{1}{({2s}-1)x^{{2s}-1}}
\COMMENT{}
.
\end{split}\end{equation*}
In particular, $P^{(i)}(x)\to0$ as $x\to\infty$.
\end{lem}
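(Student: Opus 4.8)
The plan is to reduce everything to the one-dimensional marginal of the mollifier, $g(x):=\int_{-\infty}^{\infty}\rho(x,y)\,dy=\int_{-\infty}^{\infty}\tilde\rho\big(\sqrt{x^2+y^2}\big)\,dy$, to rewrite $P$ in terms of $g$, and then to differentiate. First one records the elementary properties of $g$: it is even, of class $C^{\infty}$ (since $\rho$ is smooth and rapidly decaying), strictly positive, satisfies $\int_{\R}g=1$ because $\int_{\R^2}\rho=1$, and — the one computation that matters — $g(x)=2s\,\abs{x}^{-2s-1}$ for $\abs{x}\geq r_0$. This identity comes from inserting $\tilde\rho(r)=\tilde\rho_0 r^{-2s-2}$, substituting $y=\abs{x}t$, using the standard integral $\int_{-\infty}^{\infty}(1+t^2)^{-s-1}\,dt=\Beta(\tfrac12,\tfrac12+s)$, and invoking the normalization \eqref{rhoconst}. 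One also notes that $g$ is strictly decreasing on $(0,\infty)$: differentiating under the integral sign, $g'(x)=\int_{-\infty}^{\infty}\tilde\rho'\big(\sqrt{x^2+y^2}\big)\,\frac{x}{\sqrt{x^2+y^2}}\,dy<0$ for $x>0$, since $\tilde\rho'\leq0$ everywhere and $\tilde\rho'<0$ on $[r_0,\infty)$.

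Next, rewrite $P(x)=\int_x^{\infty}g(t)(t-x)\,dt=\int_0^{\infty}g(x+u)\,u\,du$, observing that this integral converges precisely because $s>1/2$ forces $g(t)\,t=O(t^{-2s})=o(t^{-1})$. Leibniz' rule then gives $P'(x)=-\int_x^{\infty}g(t)\,dt$, $P''(x)=g(x)$ and $P^{(3)}(x)=g'(x)$; hence $P\in C^3([0,\infty))$ (in fact $C^{\infty}$), and the uniform bound $\norm[C^3([0,\infty))]{P}\leq C_P$ is immediate from $\int_{\R}g=1$ and the boundedness of $g$ and $g'$. The alternating sign statement $(-1)^iP^{(i)}(x)>0$ on $(0,\infty)$ then reads off: $P>0$ and $P''=g>0$ are trivial, $-P'=\int_x^{\infty}g>0$, and $-P^{(3)}=-g'>0$ by the monotonicity of $g$.

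For the explicit formula and the two-sided algebraic bounds one splits at $r_0$. On $[r_0,\infty)$, substituting $g(t)=2s\,t^{-2s-1}$ and evaluating the two elementary integrals (the use of $2s>1$ being essential so that the antiderivatives vanish at infinity) yields $P(x)=\frac{1}{(2s-1)x^{2s-1}}$, and therefore $P'(x)=-x^{-2s}$, $P''(x)=2s\,x^{-2s-1}$, $P^{(3)}(x)=-2s(2s+1)x^{-2s-2}$; since $x$ and $1+x$ are comparable for $x\geq r_0$, each of these is bounded above and below by a multiple of $(1+x)^{-2s+1-i}$. On the compact interval $[0,r_0]$, each $P^{(i)}$ is continuous (hence bounded above) and, for $i\leq2$, nonvanishing and one-signed (hence bounded below), while $(1+x)^{-2s+1-i}$ lies between two positive constants there; enlarging $C_P$ completes the estimate. (For $i=3$ the lower bound degenerates at $x=0$ since $g'(0)=0$ by evenness of $g$; that bound should be stated only for $x$ bounded away from the origin, which is all that is needed later.)

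There is no deep obstacle here — the lemma is essentially a self-contained computation — but the point requiring care is the interlocking of three things: the decay exponent $-2s-2$ of $\tilde\rho$, the Beta-function normalization \eqref{rhoconst}, and the sub-criticality $s>1/2$. It is exactly this combination that makes $g(x)\sim 2s\,\abs{x}^{-2s-1}$ with the correct constant, makes $P$ (and hence $\varphi-h$) well defined, and produces the stated algebraic decay rates for $P$ and its derivatives.
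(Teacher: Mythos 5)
Your proposal is correct and follows essentially the same route as the paper: both reduce $P$ to the marginal $\int_{\R}\rho(x,y)\,dy$ (your $g$, which is exactly the paper's formula for $P''$), compute the explicit power law for $x\geq r_0$ via the Beta-function normalization \eqref{rhoconst}, read off the signs from monotonicity of $\tilde\rho$, and split at $r_0$ for the two-sided bounds. Your parenthetical remark is also a fair catch: since $P^{(3)}(0)=g'(0)=0$ by evenness, the stated lower bound genuinely fails for $i=3$ at $x=0$ (the paper's proof only establishes upper bounds there, and the lower bound for $i=3$ is never used later).
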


\begin{proof}
Clearly, $P\in{C}^3([0,\infty))$ and if $x>0$, then
\begin{equation*}\begin{split}
P(x)
&=-\int_x^\infty\!\left(\int_{-\infty}^\infty\!\rho(x',y)\,dy\right)(x-x')\,dx'>0\\
P'(x)
&=-\int_x^\infty\!\left(\int_{-\infty}^\infty\!\rho(x',y)\,dy\right)\,dx'<0\\
P''(x)
&=\int_{-\infty}^\infty\!\rho(x,y)\,dy>0\\
P^{(3)}(x)
&=\int_{-\infty}^\infty\!\dfrac{x}{\sqrt{x^2+y^2}}\tilde\rho'\left(\sqrt{x^2+y^2}\right)\,dy<0.
\end{split}\end{equation*}
For $x\geq{r_0}$, we have by \eqref{rhoconst},
\begin{equation*}\begin{split}
P(x)
&=-\int_x^\infty\!\left(\int_{-\infty}^\infty\!\dfrac{\tilde\rho_0}{((x')^2+y^2)^{1+s}}\,dy\right)(x-x')\,dx'\\
&=\tilde\rho_0\Beta\left(\dfrac12,\dfrac{1+{2s}}{2}\right)\int_x^\infty\!\dfrac{x'-x}{(x')^{1+{2s}}}\,dx'\\
&={2s}\left(\dfrac{1}{({2s}-1)x^{{2s}-1}}-\dfrac{x}{{2s}{x}^{{2s}}}\right)\\
&=\dfrac{1}{({2s}-1)x^{{2s}-1}}.
\end{split}\end{equation*}
The decay of the derivatives follows. Since they all have a sign, we have for any $x>0$,
\begin{equation*}\begin{split}
0<P(x)
&<P(0)
=\int_0^\infty\int_{-\infty}^\infty\!x\rho(x,y)\,dydx
=2\int_0^\infty\!r^2\tilde\rho(r)\,dr\\
0<-P'(x)
&<-P'(0)
=\int_0^\infty\int_{-\infty}^\infty\!\rho(x,y)\,dydx
=\dfrac12\\
0<P''(x)
&<P''(0)
=\int_{-\infty}^\infty\!\rho(0,y)\,dy
=2\int_0^\infty\!\tilde\rho(r)\,dr
\end{split}\end{equation*}
To get a bound for $P^{(3)}(x)$, we consider two cases. If $x>r_0$, then
$$\abs{P^{(3)}(x)}<-P^{(3)}(r_0)=\dfrac{{2s}({2s}+1)}{r_0^{{2s}+2}}.$$
If $x\leq{r_0}$, then we use the change of variable $r=\sqrt{x^2+y^2}$, to obtain
\begin{equation*}\begin{split}
\abs{P^{(3)}(x)}
&=2\int_0^\infty\!\dfrac{x}{\sqrt{x^2+y^2}}\tilde\rho'\left(\sqrt{x^2+y^2}\right)\,dy\\
&=2\int_x^\infty\!\dfrac{x}{\sqrt{r^2-x^2}}\abs{\tilde\rho'(r)}\,dr\\
&\leq\sqrt{2x}\int_x^\infty\!\dfrac{\abs{\tilde\rho'(r)}}{\sqrt{r-x}}\,dr\\
&\leq\sqrt{2r_0}\left(\int_0^{r_0}\!\dfrac{\abs{\tilde\rho(r+x)}}{\sqrt{r}}\,dr+\int_{r_0}^\infty\!\dfrac{\tilde\rho_0({2s}+2)}{\sqrt{r}(r+x)^{3+{2s}}}\,dr\right)\\
&\leq\sqrt{2}\left(2r_0\norm[{L^\infty([0,2r_0])}]{\tilde\rho'}+\tilde\rho_0r_0^{-2-{2s}}\right).
\end{split}\end{equation*}
This finishes the proof.
\end{proof}

To estimate $\varphi(x,y)-h(x,y)$, it suffices to fix $(x,y)\in\overline{\Omega_j}$ and study $\tilde\varphi_j(x,y)=\varphi(x,y)-h_j(x,y)=\rho\ast(h-h_j)(x,y)$. By this definition, we expect that $\tilde\varphi_j(x,y)$ to be controlled by the distance from $(x,y)$ to the boundary $\p\Omega_j$ because this distance determine the size of a neighborhood of $(x,y)$ such that $h=h_j$.

To fix the notation, we observe that $\overline{\Omega_j}\cap\overline{\Omega_{j\pm1}}$ is a half line on which $h_j(x,y)-h_{j\pm1}(x,y)=(a_j-a_{j\pm1})x+(b_j-b_{j\pm1})y=0$. Let us write, for $(x,y)\in\overline{\Omega_j}$,
\begin{equation*}\begin{split}
m_j^\pm
&=\sqrt{(a_j-a_{j\pm1})^2+(b_j-b_{j\pm1})^2}\\
\lambda_j^\pm
=\lambda_j^\pm(x,y)
&=\dist((x,y),\Omega_{j\pm1})
=\dfrac{(a_j-a_{j\pm1})x+(b_j-b_{j\pm1})y}{m_j^\pm}\\
\lambda_j
=\lambda_j(x,y)
&=\dist((x,y),\p\Omega_j)
=\min\set{\lambda_j^+,\lambda_j^-},\\
\end{split}\end{equation*}
where $\Omega_{j\pm1}$ is understood to be $\Omega_{j\pm1\pmod{N}}$. We also write
$$\lambda=\lambda(x,y)=\dist((x,y),E)=\min_{1\leq{j}\leq{N}}\lambda_j.$$

By the above motivation, we express $\tilde\varphi_j$ as
\begin{equation*}\begin{split}
\tilde\varphi_j(x,y)
&=\rho\ast(h_j-h_{j+1})_{-}(x,y)+\rho\ast(h_j-h_{j-1})_{-}(x,y)+\rho\ast{g_j}(x,y)\\
&=m_j^{+}P(\lambda_j^+)+m_j^{-}P(\lambda_j^-)+\rho\ast{g_j}(x,y)
\end{split}\end{equation*}
where $g_j=h-h_j-(h_j-h_{j+1})_{-}-(h_j-h_{j-1})_{-}$ vanishes on $\overline{\Omega_{j-1}}\cup\overline{\Omega_j}\cup\overline{\Omega_{j+1}}$.

We prove that $\rho\ast{g_j}$ is an error term, up to three derivatives.
\begin{lem}\label{varphi2}
There exists constants $C_g=C_g(\tilde\rho)$ and $\gamma>1$ such that for any $1\leq{j}\leq{N}$, any $(x,y)\in\overline{\Omega_j}$, and any integers $i_1\geq0$, $i_2\geq0$ with $i_1+i_2\leq3$, we have
$$\abs{\p_x^{i_1}\p_y^{i_2}(\rho\ast{g_j})(x,y)}\leq{C_g}\left(1+\sqrt{x^2+y^2}\right)^{-i_1-i_2}P(\gamma\lambda_j)$$
and, in particular,
$$\abs{\p_x^{i_1}\p_y^{i_2}(\rho\ast{g_j})(x,y)}\leq{C_g}\abs{P^{(i_1+i_2)}(\gamma\lambda_j)}.$$
\end{lem}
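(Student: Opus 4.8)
The plan is to exploit that $g_j$ is globally Lipschitz, vanishes at the origin, and has support two full sectors away from $\overline{\Omega_j}$. Since $g_j=h-h_j-(h_j-h_{j+1})_--(h_j-h_{j-1})_-$ is piecewise linear with $\abs{\nabla g_j}\le 6m_*$ a.e.\ and $g_j(0)=0$ (the origin lies in $\overline{\Omega_j}$, where $g_j\equiv0$), we have $\abs{g_j(p)}\le 6m_*\abs{p}$. Combining this with the bounds $\abs{\p_x^{i_1}\p_y^{i_2}\rho}\le C_{\tilde\rho}$ and, for $\abs{(x',y')}\ge r_0$, $\abs{\p_x^{i_1}\p_y^{i_2}\rho(x',y')}\le C_{\tilde\rho}\abs{(x',y')}^{-i_1-i_2}\rho(x',y')=C_{\tilde\rho}\tilde\rho_0\abs{(x',y')}^{-i_1-i_2-2s-2}$ recorded in the proof of Lemma \ref{varphi1}, the integral $\int_{\R^2}(\p_x^{i_1}\p_y^{i_2}\rho)(x',y')\,g_j\bigl((x,y)-(x',y')\bigr)\,dx'dy'$ converges absolutely for $i_1+i_2\le3$ (already for $i_1+i_2=0$ this uses $s>1/2$, as in the remark on sub-criticality), and dominated convergence identifies it with $\p_x^{i_1}\p_y^{i_2}(\rho\ast g_j)(x,y)$. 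Its integrand vanishes unless $(x,y)-(x',y')\in\operatorname{supp}g_j\subset W:=\bigcup_{k\notin\{j-1,j,j+1\}}\overline{\Omega_k}$; if $N=3$ then $W=\emptyset$ and $g_j\equiv0$, so the lemma is vacuous, and from now on $N\ge4$.

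The heart of the matter is a geometric separation: for every $(x,y)\in\overline{\Omega_j}$,
\[
\dist\bigl((x,y),W\bigr)\ \ge\ \lambda_j(x,y)+\delta\abs{(x,y)}\qquad\text{for some fixed }\delta>0,
\]
so in particular $\dist((x,y),W)\ge\gamma\,\lambda_j(x,y)$ with $\gamma:=1+\delta>1$ (using $\lambda_j(x,y)=\dist((x,y),\p\Omega_j)\le\abs{(x,y)}$) and $\dist((x,y),W)\ge\delta\abs{(x,y)}$. To prove the display, note that $W\cap\overline{\Omega_j}=\{0\}$ (non-consecutive sectors meet only at the origin) and that each $\Omega_j$ is a convex sector of opening $\psi_j=\tfrac12(\mathrm{arc}_{j-1,j}+\mathrm{arc}_{j,j+1})<\pi$, because the angular gaps between the $N\ge3$ distinct directions $(a_k,b_k)/m_*$ are positive and sum to $2\pi$; hence a unit vector $\omega\in\overline{\Omega_j}$ is strictly closer to $\p\Omega_j$ than to the origin, which yields $\dist(\omega,W)>\dist(\omega,\p\Omega_j)$ for every $\omega$ on the compact arc $\overline{\Omega_j}\cap S^1$ after a short case analysis on whether the nearest point of $W$ is the origin. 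Since $\omega\mapsto\dist(\omega,W)-\dist(\omega,\p\Omega_j)$ is continuous, it is bounded below by some $\delta>0$ there, and the display follows by homogeneity.

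With this in hand, on the support of the integrand we have $\abs{(x',y')}\ge\dist((x,y),W)\ge\max\{\gamma\lambda_j,\ \delta\abs{(x,y)}\}$, hence $\abs{g_j((x,y)-(x',y'))}\le 6m_*\abs{(x,y)-(x',y')}\le 6m_*(1+\delta^{-1})\abs{(x',y')}$. We split the domain at $\abs{(x',y')}=r_0$. On $\{\abs{(x',y')}\le r_0\}$ — which is nonempty only when $\gamma\lambda_j\le r_0$ and (since $\abs{(x',y')}\ge\delta\abs{(x,y)}$) $\abs{(x,y)}$ is bounded — we bound $\abs{\p_x^{i_1}\p_y^{i_2}\rho}\le C_{\tilde\rho}$ and get a contribution $\lesssim\int_0^{r_0}r^2\,dr$, which is $\lesssim P(\gamma\lambda_j)$ because $P$ is bounded below on $[0,r_0]$, the weight $(1+\abs{(x,y)})^{-i_1-i_2}$ being $\asymp1$ here. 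On $\{\abs{(x',y')}>r_0\}$ we use $\abs{\p_x^{i_1}\p_y^{i_2}\rho}\le C_{\tilde\rho}\abs{(x',y')}^{-i_1-i_2}\rho$ and extract $(1+\abs{(x,y)})^{-i_1-i_2}$ from $\abs{(x',y')}^{-i_1-i_2}$ via $\abs{(x',y')}\ge\delta\abs{(x,y)}$ when $\abs{(x,y)}\gtrsim1$ and via $\abs{(x',y')}>r_0$ when $\abs{(x,y)}\lesssim1$. Both cases reduce to $\int_{\abs{(x',y')}\ge\gamma\lambda_j}\rho(x',y')\abs{(x',y')}\,dx'dy'=2\pi\int_{\gamma\lambda_j}^\infty r^2\tilde\rho(r)\,dr$, which by \eqref{rhoconst} and Lemma \ref{P} equals $2\pi\tilde\rho_0\,P(\gamma\lambda_j)$ for $\gamma\lambda_j\ge r_0$ and, as a continuous positive function of $\gamma\lambda_j\in[0,\infty)$, is $\le C\,P(\gamma\lambda_j)$ throughout. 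This is the first inequality; the second follows from it, since $1+\gamma\lambda_j\le\gamma(1+\abs{(x,y)})$ makes $(1+\abs{(x,y)})^{-i_1-i_2}P(\gamma\lambda_j)$ a multiple of $(1+\gamma\lambda_j)^{-i_1-i_2}P(\gamma\lambda_j)$, which by the two-sided bounds in Lemma \ref{P} is comparable to $\abs{P^{(i_1+i_2)}(\gamma\lambda_j)}$.

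The only delicate point is the strict inequality $\gamma>1$ in the separation estimate: having $\dist((x,y),\operatorname{supp}g_j)$ comfortably larger than $\lambda_j$, rather than merely $\ge\lambda_j$, is what makes $\rho\ast g_j$ a true remainder against the principal terms $m_j^\pm P(\lambda_j^\pm)$ of $\tilde\varphi_j$, and it rests on the pyramid geometry — that $\operatorname{supp}g_j$ is separated from $\overline{\Omega_j}$ by the two entire adjacent sectors $\Omega_{j\pm1}$, and that each sector has opening strictly less than $\pi$. Everything else is routine bookkeeping with the mollifier decay, in the spirit of \cite{taniguchi1}.
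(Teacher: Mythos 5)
Your proof is correct and follows essentially the same route as the paper's: the same mollifier-derivative bounds recorded in Lemma \ref{varphi1}, the same key geometric fact that the set carrying $g_j$ lies at distance at least $\gamma\lambda_j$ from $(x,y)\in\overline{\Omega_j}$ with $\gamma>1$, and the same identification of the resulting tail integral with $P(\gamma\lambda_j)$ via the explicit form of $\tilde\rho$ for $r\geq r_0$. The only differences are cosmetic — you bound $\abs{g_j(p)}\leq 6m_*\abs{p}$ and compute $\int_{r\geq\gamma\lambda_j}r^2\tilde\rho(r)\,dr$ directly, where the paper bounds $\abs{g_j}\leq3(h-h_j)$ and recognizes the convolution as a sum of terms $P\bigl((h_j-h_k)(x,y)/m_{jk}\bigr)$ — and you in fact supply the compactness argument for the existence of $\gamma>1$ that the paper only asserts.
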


\begin{proof}
We first claim that $\abs{g_j}\leq3\abs{h-h_j}$ on the whole $\R^2$. Indeed, by the definition of $g_j$, we have $g_j\leq{h-h_j}$ and $g_j\geq0$ on $(\set{h_j\leq{h_{j+1}}}\cap\set{h_j\leq{h_{j+1}}})^c$. On $\set{h_j\leq{h_{j+1}}}\cap\set{h_j\leq{h_{j+1}}}$, $g_j=h+h_j-h_{j+1}-h_{j-1}$ and so $g_j+(h-h_j)=2h-h_{j+1}-h_{j-1}$. Thus, $0\leq{g_j}+(h-h_j)\leq2(h-h_j)$ on $\R^2$. Therefore,
$$\abs{g_j}\leq\abs{g_j+(h-h_j)}+\abs{h-h_j}=3(h-h_j),$$
as we claimed.

We have
\begin{equation*}\begin{split}
\abs{\p_x^{i_1}\p_y^{i_2}(\rho\ast{g_j})}
&=\abs{(\p_x^{i_1}\p_y^{i_2}\rho)\ast{g_j}}\\
&=\abs{\sum_{k\neq{j},j\pm1}\int_{\Omega_k}\!(\p_x^{i_1}\p_y^{i_2}\rho)(x-x',y-y')g_j(x',y')\,dx'dy'},\\
\end{split}\end{equation*}
where $k\neq{j\pm1}$ is understood as $k\not\equiv{j\pm1}\pmod{N}$.

Suppose $x^2+y^2>r_0^2$. From the proof of Lemma \ref{varphi1}, we can find a constant ${C_{\tilde\rho}}$ such that
$$\abs{\p_x^{i_1}\p_y^{i_2}\rho(x-x',y-y')}\leq{C_{\tilde\rho}}((x-x')^2+(y-y')^2)^{-\frac{i_1+i_2}{2}}\rho(x-x',y-y').$$
For $(x',y')\notin\Omega_j$, $(x-x')^2+(y-y')^2\geq{x}^2+y^2$. Hence,
\begin{equation*}\begin{split}
&\quad\,\,\abs{\p_x^{i_1}\p_y^{i_2}(\rho\ast{g_j})}\\
&\leq3{C_{\tilde\rho}}(x^2+y^2)^{-\frac{i_1+i_2}{2}}\sum_{k\neq{j},j\pm1}\int_{\Omega_k}\!\rho(x-x',y-y')(h_k-h_j)(x',y')\,dx'dy'\\
&=3{C_{\tilde\rho}}(x^2+y^2)^{-\frac{i_1+i_2}{2}}\sum_{k\neq{j},j\pm1}\int_{(x,y)-\Omega_k}\!\rho(x',y')(h_k-h_j)(x-x',y-y')\,dx'dy'\\
&\leq3{C_{\tilde\rho}}(x^2+y^2)^{-\frac{i_1+i_2}{2}}\sum_{k\neq{j},j\pm1}\int_{(x,y)-\Omega_k}\!\rho(x',y')\Big((h_j-h_k)(x',y')\\
&\hspace{8.5cm}-(h_j-h_k)(x,y)\Big)\,dx'dy'\\
\end{split}\end{equation*}

By a simple use of intermediate value theorem, we have
$$\set{(x,y)\in\R^2\mid(h_j-h_k)(x,y)=0}\subset\left(\R^2\backslash\overline{\Omega_j}\right)\cup\set{(0,0)},$$
that is, roughly, the line $\set{h_j=h_k}$ is contained in exterior of $\Omega_j$. Therefore, we can find a constant $\gamma>1$, depending only on the configuration of the $\Omega_j$'s, such that $\dist((x,y),\set{h_j=h_k})>\gamma\lambda_j$. But for $(x,y)\in\Omega_j$,
$$\dist((x,y),\set{h_j=h_k})=\dfrac{(h_j-h_k)(x,y)}{m_{jk}}>0,$$
where $m_{jk}=\sqrt{(a_j-a_k)^2+(b_j-b_k)^2}$. Hence,
$$\dfrac{(h_j-h_k)(x,y)}{m_{jk}}>\gamma\lambda_j.$$
Note also that by a rotation, we can write
$$P(x)=\int_{\R^2}\!\rho(x',y')\left(\dfrac{(h_k-h_j)(x',y')}{m_{jk}}-x\right)_{+}\,dx'dy'.$$

We now have
\begin{equation*}\begin{split}
&\quad\,\,\abs{\p_x^{i_1}\p_y^{i_2}(\rho\ast{g_j})}\\
&\leq3{C_{\tilde\rho}}\left(\max_{1\leq{j,k}\leq{N}}m_{jk}\right)(x^2+y^2)^{-\frac{i_1+i_2}{2}}\\
&\qquad\cdot\sum_{k\neq{j},j\pm1}\int_{(x,y)-\Omega_k}\!\rho(x',y')\Bigg(\dfrac{(h_j-h_k)(x',y')}{m_{jk}}-\dfrac{(h_j-h_k)(x,y)}{m_{jk}}\Bigg)\,dx'dy'\\
&\leq3{C_{\tilde\rho}}\left(\max_{1\leq{j,k}\leq{N}}m_{jk}\right)(x^2+y^2)^{-\frac{i_1+i_2}{2}}\\
&\qquad\cdot\sum_{k\neq{j},j\pm1}\int_{\R^2}\!\rho(x',y')\Bigg(\dfrac{(h_j-h_k)(x',y')}{m_{jk}}-\dfrac{(h_j-h_k)(x,y)}{m_{jk}}\Bigg)_+\,dx'dy'\\
&\leq3{C_{\tilde\rho}}\left(\max_{1\leq{j,k}\leq{N}}m_{jk}\right)(x^2+y^2)^{-\frac{i_1+i_2}{2}}\sum_{k\neq{j},j\pm1}P\left(\dfrac{(h_j-h_k)(x,y)}{m_{jk}}\right)\\
&\leq3{C_{\tilde\rho}}\left(\max_{1\leq{j,k}\leq{N}}m_{jk}\right)(x^2+y^2)^{-\frac{i_1+i_2}{2}}P(\gamma\lambda_j)
\end{split}\end{equation*}

If $x^2+y^2\leq{r_0^2}$, then we use the fact
$$\abs{\p_x^{i_1}\p_y^{i_2}\rho(x-x',y-y')}\leq{C_{\tilde\rho}}\rho(x-x',y-y'),$$
which is also mentioned in Lemma \ref{varphi1}. By the exact same argument, the first estimate is established.

To obtain the second one, noting that since the angle of $\Omega_j$ (that is, the angle between the lines $h_j=h_{j+1}$ and $h_j=h_{j-1}$) is less than $\pi$, we have $\sqrt{x^2+y^2}\geq\lambda_j$. Now the right hand side of the two estimates are bounded for $\lambda_j\leq{r_0}$ and is $O\left(\lambda_j^{-{2s}+1-i_1-i_2}\right)$ for $\lambda_j\geq{r_0}$. Hence, by enlarging $C_g$ if necessary, the proof is completed.
\end{proof}

\begin{lem}\label{varphi3}
There exists a constant $C_\varphi=C_\varphi(\tilde\rho,\gamma)$ such that for any non-negative integers $i_1$, $i_2$ with $0\leq{i_1+i_2}\leq3$ and any $(x,y)\in\R^2\backslash{E}$, we have
$$\abs{\p_x^{i_1}\p_y^{i_2}(\varphi(x,y)-h(x,y))}\leq{C_\varphi}\abs{P^{(i_1+i_2)}(\lambda)},$$
and for any $(x,y)\in\R^2$,
$$\varphi(x,y)-h(x,y)\geq{C_\varphi^{-1}}P(\lambda).$$
In particular, if $2\leq{i_1+i_2}\leq3$, then for any $(x,y)\in\R^2$, there holds
$$\abs{\p_x^{i_1}\p_y^{i_2}\varphi(x,y)}\leq{C_\varphi}\abs{P^{(i_1+i_2)}(\lambda)}.$$
\end{lem}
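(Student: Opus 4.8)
The plan is to estimate on each sector separately. Fix $(x,y)\in\R^2\setminus E$; it lies in the interior of a unique $\Omega_j$, where $h\equiv h_j$, so that near $(x,y)$ one has $\varphi-h=\tilde\varphi_j$ and every partial derivative of $\varphi-h$ agrees with that of $\tilde\varphi_j$. The first thing to record is the geometric identity $\lambda(x,y)=\lambda_j(x,y)=\min\set{\lambda_j^+,\lambda_j^-}$ for $(x,y)\in\overline{\Omega_j}$: a segment joining an interior point of $\Omega_j$ to a point of $\p\Omega_k$ with $k\neq j$ must cross $\p\Omega_j$, hence $\dist((x,y),\p\Omega_k)\geq\dist((x,y),\p\Omega_j)$, and the equality $\dist((x,y),E)=\dist((x,y),\p\Omega_j)$ extends to $\overline{\Omega_j}$ by continuity. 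In particular $\lambda_j^\pm\geq\lambda$, and $\gamma\lambda_j=\gamma\lambda\geq\lambda$ since $\gamma>1$.

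For the derivative bounds I would differentiate the decomposition $\tilde\varphi_j=m_j^+P(\lambda_j^+)+m_j^-P(\lambda_j^-)+\rho\ast g_j$ on the interior of $\Omega_j$ (where $\lambda_j^\pm>0$, so $P(\lambda_j^\pm)$ is genuinely $C^3$). Since $\lambda_j^\pm$ is linear with $\abs{\nabla\lambda_j^\pm}=1$, the chain rule gives $\abs{\p_x^{i_1}\p_y^{i_2}P(\lambda_j^\pm)}\leq\abs{P^{(i_1+i_2)}(\lambda_j^\pm)}$, while Lemma \ref{varphi2} controls the remaining term by $C_g\abs{P^{(i_1+i_2)}(\gamma\lambda_j)}$. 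The one elementary ingredient that still needs checking is a monotone comparison for $P$: from the two-sided estimate of Lemma \ref{P}, and because $-2s+1-i<0$ for every $0\leq i\leq3$ (this is where $s>1/2$ enters), one has $\abs{P^{(i)}(t)}\leq C_P(1+t)^{-2s+1-i}\leq C_P(1+\lambda)^{-2s+1-i}\leq C_P^2\abs{P^{(i)}(\lambda)}$ whenever $t\geq\lambda$. Applying this with $t=\lambda_j^\pm$ and $t=\gamma\lambda_j$ and summing yields $\abs{\p_x^{i_1}\p_y^{i_2}(\varphi-h)}\leq C_\varphi\abs{P^{(i_1+i_2)}(\lambda)}$, with $C_\varphi$ depending only on $C_P$, $C_g$, $\gamma$ and $\max_j(m_j^++m_j^-)$.

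The lower bound requires no differentiation: for any $(x,y)\in\R^2$, say $(x,y)\in\overline{\Omega_j}$, one has $\varphi-h=\rho\ast(h-h_j)\geq\rho\ast(h_j-h_{j+1})_-=m_j^+P(\lambda_j^+)$ because $h\geq\max\set{h_j,h_{j+1}}$ pointwise and $\rho>0$, and likewise $\varphi-h\geq m_j^-P(\lambda_j^-)$; taking the larger and using that $P$ is decreasing gives $\varphi-h\geq(\min_k\min\set{m_k^+,m_k^-})\,P(\lambda)$, which is the asserted bound after enlarging $C_\varphi$. Finally, for $2\leq i_1+i_2\leq3$ one has $\p_x^{i_1}\p_y^{i_2}h\equiv0$ on $\R^2\setminus E$ (as $h$ is locally affine there), so the bound for $\p_x^{i_1}\p_y^{i_2}(\varphi-h)$ is one for $\p_x^{i_1}\p_y^{i_2}\varphi$ on $\R^2\setminus E$; since $\varphi\in C^3$, $\lambda$ is continuous and $P\in C^3([0,\infty))$, the inequality passes from the dense set $\R^2\setminus E$ to all of $\R^2$. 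The only place demanding care is the sign and monotonicity bookkeeping for $P^{(i)}$ — especially for $i=3$, where no information on $P^{(4)}$ is available and one must invoke the two-sided estimate of Lemma \ref{P} rather than a naive monotonicity comparison; everything else is routine.
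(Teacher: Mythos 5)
Your argument is correct. For the upper bound you follow essentially the paper's route: differentiate the decomposition $\tilde\varphi_j=m_j^+P(\lambda_j^+)+m_j^-P(\lambda_j^-)+\rho\ast g_j$, use that $\lambda_j^\pm$ is linear with unit gradient, invoke Lemma \ref{varphi2} for the $\rho\ast g_j$ term, and compare $\abs{P^{(i)}}$ at $\lambda_j^\pm$ and $\gamma\lambda_j$ with its value at $\lambda$. Your explicit remark that for $i=3$ one cannot use monotonicity of $\abs{P^{(3)}}$ (no sign information on $P^{(4)}$) and must instead pass through the two-sided bound $C_P^{-1}(1+t)^{-2s-2}\leq\abs{P^{(3)}(t)}\leq C_P(1+t)^{-2s-2}$ is a point the paper silently elides, so this is a genuine improvement in rigor rather than a deviation.

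Where you genuinely diverge is the lower bound, and your route is simpler and arguably more robust. The paper splits $\overline{\Omega_j}$ into three regions: for $\lambda_j\geq r_1$ it uses the decomposition and absorbs the $\rho\ast g_j$ error into half of the main term; for $\lambda_j<r_1$ and $\abs{(x,y)}\geq r_2$ it bounds $\rho\ast(h-h_j)$ from below by an explicit integral over a half-ball sitting inside $\Omega_{j+1}$; and on the remaining compact set it appeals to continuity and strict positivity. You instead observe the pointwise inequality $h-h_j\geq(h_j-h_{j\pm1})_-$ on all of $\R^2$ (since $h\geq h_{j\pm1}$ and $h\geq h_j$), so that positivity of $\rho$ gives $\varphi-h=\rho\ast(h-h_j)\geq m_j^\pm P(\lambda_j^\pm)$ directly; choosing the branch realizing $\lambda_j=\min\set{\lambda_j^+,\lambda_j^-}$ yields the bound globally with an explicit constant and no case analysis. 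This buys you two things: no need to choose $r_1$, $r_2$ (the paper's choice of $r_1$ relies on $P(\gamma\lambda)/P(\lambda)$ being small for large $\lambda$, which is delicate since this ratio only tends to $\gamma^{1-2s}$), and a constant with transparent dependence on the data. Your handling of the final "in particular" claim — that $\p^{i}h\equiv0$ off $E$ for $i\geq2$ and the estimate extends to $E$ by continuity of $\varphi\in C^3$, $\lambda$, and $P^{(i)}$ — is also correct and fills in a step the paper leaves implicit.
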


\begin{proof}
Without loss of generality, let $(x,y)\in\Omega_j$. We have
$$\varphi(x,y)-h_j(x,y)=m_j^{+}P(\lambda_j^{+})+m_j^{-}P(\lambda_j^{-})+\rho\ast{g_j}(x,y)$$
and so
\begin{equation*}\begin{split}
\p_x^{i_1}\p_y^{i_2}\tilde\varphi_j(x,y)
&=m_j^{+}(\p_x\lambda_j^{+})^{i_1}(\p_y\lambda_j^{+})^{i_2}P^{(i_1+i_2)}(\lambda_j^{+})\\
&\qquad+m_j^{-}(\p_x\lambda_j^{-})^{i_1}(\p_y\lambda_j^{-})^{i_2}P^{(i_1+i_2)}(\lambda_j^{-})+\p_x^{i_1}\p_y^{i_2}(\rho\ast{g_j})(x,y)\\
&=(m_j^{+})^{1-i_1-i_2}(a_j-a_{j+1})^{i_1}(b_j-b_{j+1})^{i_2}P^{(i_1+i_2)}(\lambda_j^{+})\\
&\qquad+(m_j^{-})^{1-i_1-i_2}(a_j-a_{j-1})^{i_1}(b_j-b_{j-1})^{i_2}P^{(i_1+i_2)}(\lambda_j^{-})\\
&\qquad+\p_x^{i_1}\p_y^{i_2}(\rho\ast{g_j})(x,y).
\end{split}\end{equation*}
By Lemma \ref{varphi2}, we have
\begin{equation*}\begin{split}
\abs{\p_x^{i_1}\p_y^{i_2}\varphi(x,y)}
&\leq\tilde{C}_1\abs{P^{(i_1+i_2)}(\lambda_j)}+C_{g}\abs{P^{(i_1+i_2)}(\gamma\lambda_j)}\\
&\leq(\tilde{C}_1+C_{g})\abs{P^{(i_1+i_2)}(\lambda_j)}.
\end{split}\end{equation*}

To get the lower bound, we estimate $\tilde\varphi_j$ in three regions as follows. We choose $r_1>r_0$ such that for all $\lambda_j\geq{r_1}$,
$${C_g}P(\gamma\lambda_j)<\dfrac{1}{2}\min_{1\leq{j}\leq{N}}\left(\min\set{m_j^+,m_j^-}\right)P(\lambda_j).$$
On $\set{(x,y)\in\overline{\Omega_j}\mid\lambda_j\geq{r_1}}$,
$$\tilde\varphi_j(x,y)\geq\dfrac{1}{2}\min_{1\leq{j}\leq{N}}\left(\min\set{m_j^+,m_j^-}\right)P(\lambda_j).$$
It suffices to show that
$$\inf\set{\varphi_j(x,y)\,\Big|\,\lambda_j<r_1,\,\sqrt{x^2+y^2}\geq{r_2}}>0$$
for some $r_2>0$, since $\tilde\varphi_j$ attains a positive minimum in any compact set. Without loss of generality we assume that $\lambda_j^+<r_1$. Let $r_2$ be large enough such that $B_{r_1+2}\subset\overline{\Omega_j}\cup\overline{\Omega_{j+1}}$. Note that the outward unit normal of $\Omega_j$ on $\p\Omega_j\cap\p\Omega_{j+1}$ is $-\dfrac{{\bf{a}}_j-{\bf{a}}_{j+1}}{m_j^+}$. We estimate
\begin{equation*}\begin{split}
\tilde\varphi_j(x,y)
&=\rho\ast(h-h_j)(x,y)\\
&=\int_{\Omega_j^c}\!\tilde\rho(x-x',y-y')(h-h_j)(x',y')\,dx'dy'\\
&\geq\tilde\rho(r_1+2)\int_{B_{r_1+2}(x,y)\cap\Omega_{j+1}}(h_{j+1}-h_j)(x',y')\,dx'dy'\\
&\geq\tilde\rho(r_1+2)\int_{B_*}(h_{j+1}-h_j)(x',y')\,dx'dy'\\
\end{split}\end{equation*}
where $B_*\subset{B_{r_1+2}(x,y)\cap\Omega_{j+1}}$ is the half ball
$$B_*=B_{1}\left((x,y)-(\lambda_j^{+}+1)\dfrac{{\bf{a}}_j-{\bf{a}}_{j+1}}{m_j^+}\right)\cap\set{\dfrac{h_j-h_{j+1}}{m_j^+}\geq1}.$$
Now
\begin{equation*}\begin{split}
\tilde\varphi_j(x,y)
&\geq\tilde\rho(r_1+2)\int_{B_*}{m_j^+}\,dx'dy'\\
&=\dfrac{\pi{m_j^+}}{2}\tilde\rho(r_1+2).
\end{split}\end{equation*}
This completes the proof.
\end{proof}

Now, in terms of $P$ and $\lambda$ we state a key lemma concerning the decay properties $S(x,y)$ and its derivatives. In the following, it is convenient to use the vector notation ${\bf{a}}_j=(a_j,b_j)$.

\begin{lem}\label{S}
There exists a constant $C_S=C_S(s,\tilde\rho,\gamma)$ such that for any $(x,y)\in\R^2$ and any non-negative integers $i_1$, $i_2$ with $1\leq{i_1+i_2}\leq2$,
$$C_S^{-1}\abs{P'(\lambda)}\leq{S}(x,y)\leq{C_S}\abs{P'(\lambda)},$$
$$\abs{\p_x^{i_1}\p_y^{i_2}S(x,y)}\leq{C_S}\abs{P^{(1+i_1+i_2)}(\lambda)}$$
and
$$\abs{\sLap{S}(x,y)}\leq{C_S}(1+\lambda)^{-2s}.$$
In particular, with a possibly larger constant $C_S$, we have
$$\abs{\sLap{S}(x,y)}\leq{C_S}\abs{P'(\lambda)}.$$
\end{lem}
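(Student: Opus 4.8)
The plan is to reduce every assertion to the behaviour of $m_*^2-\abs{\nabla\varphi}^2$ near the edge set, which is already encoded by Lemmas~\ref{varphi1} and~\ref{varphi3} through the profile $P$. Setting $t=\abs{\nabla\varphi}^2$, formula \eqref{def S} writes $S$ as $(m_*^2-t)$ times a smooth function of $t$ that is bounded above and below by positive constants (depending only on $c,k$) on $[0,m_*^2]$; since $\abs{\nabla\varphi}<m_*$ everywhere (Lemma~\ref{varphi1}) the range of $t$ lies in that interval, so $S(x,y)$ is comparable to $m_*^2-\abs{\nabla\varphi(x,y)}^2$ and the first displayed inequality amounts to $m_*^2-\abs{\nabla\varphi}^2\asymp\abs{P'(\la)}$. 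For the upper half I would fix $(x,y)\in\Omega_j$, where $\nabla h=\mathbf a_j$ with $\abs{\mathbf a_j}=m_*$, and write
\[
m_*^2-\abs{\nabla\varphi}^2=(\mathbf a_j-\nabla\varphi)\cdot(\mathbf a_j+\nabla\varphi)=-\nabla\tilde\varphi_j\cdot(2\mathbf a_j+\nabla\tilde\varphi_j);
\]
since $\abs{\nabla\tilde\varphi_j}=\abs{\nabla(\varphi-h)}\le C_\varphi\abs{P'(\la)}\le C_\varphi\abs{P'(0)}$ is bounded (Lemma~\ref{varphi3}), the right side is at most $(2m_*+C_\varphi\abs{P'(0)})\abs{\nabla\tilde\varphi_j}\le C\abs{P'(\la)}$, and the edge set $E$ is trivial by continuity.

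The lower bound is the delicate point, and I expect it to be the main obstacle: $\rho$ decays only algebraically, so the remainder $\rho\ast g_j$ in the decomposition of $\tilde\varphi_j$ carries the \emph{same} order of decay as the leading terms and a direct estimate via that decomposition loses a constant one cannot control. I would instead use the probabilistic reading of $\varphi=\rho\ast h$: as $\rho\ge0$ and $\int_{\R^2}\rho=1$, the measure $\rho((x,y)-z)\,dz$ is a probability measure, $\nabla\varphi(x,y)=\int_{\R^2}\rho((x,y)-z)\,\nabla h(z)\,dz$ is the mean of $\nabla h$, and $m_*^2=\int_{\R^2}\rho((x,y)-z)\abs{\nabla h(z)}^2\,dz$ is constant, so
\[
m_*^2-\abs{\nabla\varphi(x,y)}^2=\int_{\R^2}\rho((x,y)-z)\,\abs{\nabla h(z)-\nabla\varphi(x,y)}^2\,dz
\]
is the variance of $\nabla h$. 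For $(x,y)\in\Omega_j$ one has $\nabla h\equiv\mathbf a_j$ on the open ball $B_\la(x,y)\subset\Omega_j$; writing $q=q(x,y)=\int_{\set{\nabla h\ne\mathbf a_j}}\rho((x,y)-z)\,dz$ for the exterior mass and using $\int_{\R^2}\rho((x,y)-z)\abs{\nabla h(z)-\mathbf a_j}^2\,dz=\bigl(m_*^2-\abs{\nabla\varphi}^2\bigr)+\abs{\nabla\varphi-\mathbf a_j}^2$, one gets
\[
m_*^2-\abs{\nabla\varphi}^2\ \ge\ \Bigl(\min_{k\ne j}\abs{\mathbf a_j-\mathbf a_k}^2\Bigr)q-\Bigl(\max_{k\ne j}\abs{\mathbf a_j-\mathbf a_k}^2\Bigr)q^2\ \ge\ \tfrac12\Bigl(\min_{k\ne j}\abs{\mathbf a_j-\mathbf a_k}^2\Bigr)q
\]
once $q$ is small. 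After the change of variables $w=(x,y)-z$ one has $q=\int_{\set{w:(x,y)-w\notin\Omega_j}}\rho(w)\,dw$; this region is contained in $\set{\abs{w}\ge\la}$ (the segment from $(x,y)$ to $(x,y)-w$ must cross $\p\Omega_j$) and contains the translated sector $(x,y)-\Omega_{j\pm1}$ on the side realizing $\la$, a cone of fixed opening angle at distance $\la$ from the origin, so since $\rho(w)=\tilde\rho_0\abs{w}^{-2s-2}$ for $\abs{w}\ge r_0$ we obtain $C^{-1}\la^{-2s}\le q\le C\la^{-2s}$ for $\la$ large. With $\abs{P'(\la)}\asymp\la^{-2s}$ (Lemma~\ref{P}, where $s>1/2$ enters) this settles $\la\ge r_1$; for $\la\le r_1$ I would argue as at the end of the proof of Lemma~\ref{varphi3}, using that $m_*^2-\abs{\nabla\varphi}^2>0$ is continuous, stays bounded below as $\abs{(x,y)}\to\infty$ along the edge region by a half-ball estimate, and that there $\abs{P'(\la)}\le\abs{P'(0)}$ is bounded.

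For the derivative bounds, write $S=\Theta(\nabla\varphi)$ with $\Theta(p)=c(1+\abs{p}^2)^{-1/2}-k$, which is $C^\infty$ with all derivatives bounded on $\set{\abs{p}\le m_*}$, a set containing the range of $\nabla\varphi$. By the Fa\`a di Bruno formula $\p_x^{i_1}\p_y^{i_2}S$ is a finite sum of terms $\Theta^{(\ell)}(\nabla\varphi)\prod_{r=1}^\ell\p^{b_r}\nabla\varphi$ with $b_r\ge1$, $\sum_r b_r=i_1+i_2$; each factor $\p^{b_r}\nabla\varphi$ is a derivative of $\varphi$ of order $m_r=b_r+1$, which lies in $\set{2,3}$ when $i_1+i_2\le2$, so Lemma~\ref{varphi3} and the two-sided estimates of Lemma~\ref{P} bound the term by $C\prod_r\abs{P^{(m_r)}(\la)}\le C(1+\la)^{-\sum_r(2s-1+m_r)}=C(1+\la)^{-2s\ell-(i_1+i_2)}\le C(1+\la)^{-2s-(i_1+i_2)}\le C\abs{P^{(1+i_1+i_2)}(\la)}$, the inequality $\ell\ge1$ making the single-highest-derivative term dominant. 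Hence $\abs{\p_x^{i_1}\p_y^{i_2}S}\le C_S\abs{P^{(1+i_1+i_2)}(\la)}$ for $1\le i_1+i_2\le2$.

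Finally, for the nonlocal term: by Lemma~\ref{varphi1} and the bounds just proved, $S$ is bounded and $C^2$, hence $S\in C_s^2(\R^2)$ and $\sLap S$ is given pointwise by the symmetrized second-difference formula. Fix $(x,y)$ and put $\la=\la(x,y)$. If $\la\le2$, split the integral at $\abs{\xi}=1$, bounding the second difference by $\abs{\xi}^2\|D^2S\|_{L^\infty(\R^2)}$ for $\abs{\xi}<1$ and by $4\|S\|_{L^\infty(\R^2)}$ for $\abs{\xi}\ge1$, which gives $\abs{\sLap S(x,y)}\le C\le C\,3^{2s}(1+\la)^{-2s}$. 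If $\la>2$, split at $\abs{\xi}=1$ and $\abs{\xi}=\la/2$: on $\set{\abs{\xi}<1}$ the three points lie in $B_1(x,y)$, where $\la(\cdot)\ge\la-1\ge\la/2$, so the second difference is $\le\abs{\xi}^2\sup_{B_1(x,y)}\abs{D^2S}\le C\abs{\xi}^2\abs{P''(\la/2)}\le C\abs{\xi}^2(1+\la)^{-2s-1}$, contributing $\le C(1+\la)^{-2s-1}$; on $\set{1\le\abs{\xi}<\la/2}$ the points $(x,y)\pm\xi$ lie at distance $\ge\la/2$ from $E$, so the second difference is $\le4\sup_{\set{\la(\cdot)\ge\la/2}}S\le C\abs{P'(\la/2)}\le C(1+\la)^{-2s}$, contributing $\le C(1+\la)^{-2s}\int_{\set{\abs{\xi}\ge1}}\abs{\xi}^{-2-2s}\,d\xi$; and on $\set{\abs{\xi}\ge\la/2}$ the second difference is $\le4\|S\|_{L^\infty(\R^2)}$, contributing $\le C\int_{\set{\abs{\xi}\ge\la/2}}\abs{\xi}^{-2-2s}\,d\xi=C\la^{-2s}$. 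Summing, $\abs{\sLap S(x,y)}\le C_S(1+\la)^{-2s}$, and the last assertion follows from $(1+\la)^{-2s}\le C_P\abs{P'(\la)}$ in Lemma~\ref{P}. Besides the lower bound above, the only other point needing care is keeping this splitting uniform as $\la\to0$.
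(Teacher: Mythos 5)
Your proposal is correct, and for most of the lemma it runs parallel to the paper's proof: the upper bound on $S$ comes from $m_*^2-\abs{\nabla\varphi}^2=-\nabla\tilde\varphi_j\cdot(2\mathbf{a}_j+\nabla\tilde\varphi_j)$ and Lemma \ref{varphi3}; the derivative bounds come from the chain rule plus Lemmas \ref{P} and \ref{varphi3} (the paper writes out $S_x,S_{xx},S_{xy}$ where you invoke Fa\`a di Bruno, but the order-counting is identical); and the bound on $\sLap S$ comes from splitting the singular integral at radii $1$ and $\asymp\lambda$ (the paper routes this through Corollary \ref{decay2} with outer radius $\abs{(x,y)}/2$; your choice of $\lambda/2$ is the more natural one, since it is the distance to $E$, not to the origin, that controls $D^2S$ on the intermediate annulus). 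The genuinely different step is the lower bound $S\geq C_S^{-1}\abs{P'(\lambda)}$. The paper keeps the two leading terms of $-2\mathbf{a}_j\cdot\nabla\tilde\varphi_j$, each bounded below by $m_0\abs{P'(\lambda_j^\pm)}$, and absorbs the remainder $\nabla\rho\ast g_j$ via $2C_g\abs{P'(\gamma\lambda_j)}\leq\tfrac{m_0}{2}\abs{P'(\lambda_j)}$ for $\lambda_j$ large; your reservation about this absorption is fair, since $\abs{P'(\gamma\lambda)}/\abs{P'(\lambda)}\to\gamma^{-2s}$ and the inequality is really a constraint on the constants $\gamma,C_g,m_0$ rather than an automatic consequence of $\lambda_j\to\infty$. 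Your variance identity $m_*^2-\abs{\nabla\varphi}^2=\int\rho((x,y)-z)\abs{\nabla h(z)-\nabla\varphi(x,y)}^2\,dz$, combined with the two-sided cone estimate $q\asymp\lambda^{-2s}$ for the exterior mass, sidesteps the absorption entirely, using only $\rho>0$, $\int\rho=1$, $\abs{\nabla h}\equiv m_*$ and the separation of the $\mathbf{a}_j$; it is the more robust route, at the cost of the extra geometric observation that $B_\lambda(x,y)\subset\Omega_j$ and that the complement contains a fixed-angle cone at distance $\lambda$. The one place you are only sketching is the regime $\lambda\leq r_1$ of the lower bound, but the same variance identity closes it: two half-balls of definite $\rho$-mass near $(x,y)$ on which $\nabla h$ equals the two distinct vectors $\mathbf{a}_j$ and $\mathbf{a}_{j\pm1}$ force the variance to exceed $\tfrac{c_0}{2}\min\abs{\mathbf{a}_j-\mathbf{a}_{j\pm1}}^2>0$, exactly in the spirit of the half-ball estimate at the end of the proof of Lemma \ref{varphi3}, so this is a matter of exposition rather than a gap.
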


\begin{proof}
Again, let $(x,y)\in\overline{\Omega_j}$. Since $\abs{\bf{a}_j}^2=m_*^2$, we write
\begin{equation*}\begin{split}
S
&=\dfrac{m_*^2-\abs{\nabla\varphi}^2}{\sqrt{1+\abs{\nabla\varphi}^2}\left(c+k\sqrt{1+\abs{\nabla\varphi}^2}\right)}\\
&=\dfrac{-2{\bf{a}}_j\cdot\nabla\tilde\varphi_j-\abs{\nabla\tilde\varphi_j}^2}{\sqrt{1+\abs{\nabla\varphi}^2}\left(c+k\sqrt{1+\abs{\nabla\varphi}^2}\right)}\\
\end{split}\end{equation*}
Since the denominator is bounded between two positive numbers, it suffices to show that the dominant term of the numerator is $\abs{P'(\lambda_j)}$. We have
$$\nabla\tilde\varphi_j=({\bf{a}}_j-{\bf{a}}_{j+1})P'(\lambda_j^+)+({\bf{a}}_j-{\bf{a}}_{j-1})P'(\lambda_j^-)+\nabla\rho\ast{g_j}.$$
so
\begin{equation*}\begin{split}
&\quad\,\,-2{\bf{a}}_j\cdot\nabla\tilde\varphi_j\\
&=2(\abs{{\bf{a}}_j}^2-{\bf{a}}_j\cdot{\bf{a}}_{j+1})\abs{P'(\lambda_j^+)}+2(\abs{{\bf{a}}_j}^2-{\bf{a}}_j\cdot{\bf{a}}_{j-1})\abs{P'(\lambda_j^-)}-2{\bf{a}}_j\cdot\nabla\rho\ast{g_j}
\end{split}\end{equation*}
By Lemma \ref{varphi2}, $\abs{\nabla\rho\ast{g_j}}\leq{C_g}\abs{P'(\gamma\lambda_j)}$ and so
$$\abs{-2{\bf{a}}_j\cdot\nabla\rho\ast{g_j}}\leq2m_*{C_g}\abs{P'(\gamma\lambda_j)}.$$

Using the inequality $\abs{{\bf{a}}+{\bf{b}}+{\bf{c}}}^2\leq3(\abs{\bf{a}}^2+\abs{\bf{b}}^2+\abs{\bf{c}}^2)$, we have
\begin{equation*}\begin{split}
\abs{\nabla\tilde\varphi_j}^2
&\leq3\left(\abs{{\bf{a}}_j-{\bf{a}}_{j+1}}^2\abs{P'(\lambda_j^+)}^2+\abs{{\bf{a}}_j-{\bf{a}}_{j-1}}^2\abs{P'(\lambda_j^-)}+\abs{\nabla\rho\ast{g_j}}^2\right)\\
&\leq3\left(4\abs{P'(\lambda_j)}^2+C_g^2\abs{P'(\gamma\lambda_j)}^2\right)\\
&\leq3(4+C_g^2)\abs{P'(\lambda_j)}.
\end{split}\end{equation*}

Let $$m_0=\displaystyle\min_{1\leq{j,k}\leq{N},\,j\neq{k}}\left(\abs{{\bf{a}}_j}^2-{\bf{a}}_j\cdot{\bf{a}}_k\right).$$
We can find an $r_3>r_0$ such that for all $\lambda_j\geq{r_3}$,
$$3(4+C_g^2)\abs{P'(\lambda_j)}\leq\dfrac{m_0}{2}\quad\text{and}\quad2C_g\abs{P'(\gamma\lambda_j)}\leq\dfrac{m_0}{2}\abs{P'(\lambda_j)}.$$
This implies for all $\lambda_j\geq{r_3}$,
$$m_0\abs{P'(\lambda_j)}\leq\abs{-2{\bf{a}}_j\cdot\nabla\tilde\varphi_j-\abs{\nabla\tilde\varphi_j}^2}\leq(4m_*^2+m_0)\abs{P'(\lambda_j)}.$$
Thus, the lower and upper bounds for $S$ is established.

Next, we compute the derivatives of $S$ to yield
\begin{equation*}\begin{split}
S_x
&=-\dfrac{c\varphi_x\varphi_{xx}}{(1+\abs{\nabla\varphi})^{\frac32}}\\
S_{xx}
&=\dfrac{c}{(1+\abs{\nabla\varphi}^2)^{\frac32}}\left(\varphi_x\varphi_{xxx}+\varphi_{xx}^2-\dfrac{3\varphi_x^2\varphi_{xx}^2}{1+\abs{\nabla\varphi}^2}\right)\\
S_{xy}
&=\dfrac{3c\varphi_x\varphi_y\varphi_{xx}\varphi_{yy}}{(1+\abs{\nabla\varphi})^{\frac52}}.
\end{split}\end{equation*}
Using the Lemmas \ref{varphi1}, \ref{P} and \ref{varphi3}, it is clear that
\begin{equation*}\begin{split}
\abs{S_x}
&\leq{c}{m_*}{C_\varphi}P''(\lambda)\\
\abs{S_{xx}}
&\leq{c}\left({m_*}{C_\varphi}\abs{P^{(3)}(\lambda)}+(1+3m_*^2)C_\varphi^2P''(\lambda)^2\right)\\
&\leq\tilde{C}_2(c,m_*,\tilde\rho,\gamma)\abs{P^{(3)}(\lambda)}\\
\abs{S_{xy}}
&\leq3c{m_*^2}C_\varphi^2P''(\lambda)^2\\
&\leq\tilde{C}_2(c,m_*,\tilde\rho,\gamma)\abs{P^{(3)}(\lambda)}
\end{split}\end{equation*}
Hence the $C^2$ norm of $S$ is controlled.

To estimate the $s$-Laplacian, we use Corollary \ref{decay2} to get
\begin{equation*}\begin{split}
\abs{\sLap{S}(x,y)}
&\leq{C_\Delta}\left(\norm[\dot{C}^2(B_1(x,y))]{S}+\norm[L^\infty(\R^2)]{S}\right)\\
&\leq{C_\Delta}\tilde{C}_3\left(\abs{P^{(3)}(\lambda-1)}+c-k\right)\\
&\leq\tilde{C}_4
\end{split}\end{equation*}
for $\lambda\leq2r_0$, and
\begin{equation*}\begin{split}
&\quad\,\,\abs{\sLap{S}(x,y)}\\
&\leq{C_\Delta}\left(\norm[\dot{C}^2(B_{\frac{\abs{(x,y)}}{2}}(x,y))]{S}+\norm[\dot{C}^1(B_{\frac{\abs{(x,y)}}{2}}(x,y))]{S}+\norm[L^\infty(\R^2)]{S}\abs{(x,y)}^{-2s}\right)\\
&\leq{C_\Delta}\tilde{C}_3\left(4\abs{P^{(3)}\left(\dfrac{\lambda}{2}\right)}+2\abs{P''\left(\dfrac{\lambda}{2}\right)}+(c-k)\lambda^{-2s}\right)\\
&\leq\tilde{C}_5\lambda^{-2s}
\end{split}\end{equation*}
for $\lambda\geq2r_0$, by Lemma \ref{P}. Thus,
$$\abs{\sLap{S}(x,y)}\leq\tilde{C}_6(1+\lambda)^{-2s}.$$
The last assertion follows from Lemma \ref{P}.
\end{proof}

\medskip
\section{The super-solution}
\label{sect supersol}

Let $\alpha,\eps\in(0,1)$ be small parameters (which will be chosen according to \eqref{param eps} and \eqref{param alpha}) and write
\begin{equation*}
V(x,y,z)=\Phi\left(\hat\mu\right)+\eps{S}(\alpha{x},\alpha{y}),
\end{equation*}
where $S$ is defined in \eqref{def S} and
\begin{equation}\label{def mu}
\hat\mu=\dfrac{z-\alpha^{-1}\varphi(\alpha{x},\alpha{y})}{\sqrt{1+\abs{\nabla\varphi(\alpha{x},\alpha{y})}^2}}.
\end{equation}

Introducing the rescaled one-dimensional profile
\begin{equation}\label{def phi alpha}
\Phi_\alpha(\mu)=\Phi(\alpha^{-1}\mu),
\end{equation}
we can also write
$$V(x,y,z)=\Phi_\alpha\left(\bar\mu(\alpha{x},\alpha{y},\alpha{z})\right)+\eps{S}(\alpha{x},\alpha{y}),$$
where
$$\bar\mu(x,y,z)=\dfrac{z-\varphi(x,y)}{\sqrt{1+\abs{\nabla\varphi(x,y)}^2}}.$$

We will prove that $V$ is a super-solution when $\eps$ and $\alpha$ are sufficiently small.

Observing that $\bar\mu(x,y,z)$ is a linear in $z$ and ``almost linear'' in $x$ and $y$ as $\lambda(x,y)\to\infty$, it is tempting to compare $\sLap(\Phi_\alpha(\bar\mu))$ with $\sLapx{\Phi_\alpha}(\bar\mu)$, according to the chain rule, Lemma \ref{chain}. It turns out that their difference
$$R_1(x,y,z;\alpha)=\sLap(\Phi_\alpha(\bar\mu(x,y,z)))-\sLapx\Phi_\alpha(\bar\mu(x,y,z))$$
decays at the right order as $\lambda\to\infty$. This crucial estimate is stated as follows.

\begin{prop}[Main estimate]\label{R}
There exists a constant $C_{R_1}=C_{R_1}(s,\tilde\rho,\gamma)$ such that for any $(x,y,z)\in\R^3$, we have
$$\abs{R_1(x,y,z;\alpha)}\leq{C_{R_1}}\alpha^{-s}\left(1+\lambda(x,y)\right)^{-2s},$$
uniformly in $z$. We can also write
$$\abs{R_1(x,y,z;\alpha)}\leq{C_{R_1}}\alpha^{-s}\abs{P'(\lambda(x,y))}.$$
\end{prop}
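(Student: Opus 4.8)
The plan is to estimate $R_1(x,y,z;\alpha) = \sLap(\Phi_\alpha(\bar\mu)) - \sLapx\Phi_\alpha(\bar\mu)$ by working in Fermi coordinates adapted to the level set $\{z = \varphi(x,y)\}$, since $\bar\mu$ is precisely the signed distance to the mollified pyramid (up to the normalization factor $\sqrt{1+|\nabla\varphi|^2}$). First I would fix a point $(x_0,y_0,z_0)$ and set $\lambda_0 = \lambda(x_0,y_0)$. The singular integral defining $\sLap(\Phi_\alpha(\bar\mu))(x_0,y_0,z_0)$ is split into a near region $|\xi| \le \delta$ and a far region $|\xi| > \delta$, with $\delta$ chosen of order $1$ (or of order $\lambda_0$ when $\lambda_0$ is large, to exploit the flatness of the graph there). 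On the near region, I would Taylor-expand $\bar\mu(x_0+\xi)$ around $\xi = 0$: the zeroth and first order terms reproduce exactly the one-dimensional operator $\sLapx\Phi_\alpha(\bar\mu)$ by the chain rule (Lemma \ref{chain}, using $|\nabla\bar\mu|=1$ at the base point after normalization), and the genuinely new contribution comes from the second-order term, i.e.\ the curvature of the level set, which is controlled by $D^2\varphi$. By Lemma \ref{varphi3}, $|D^2\varphi| \le C_\varphi |P''(\lambda)|$, and after the $\alpha$-rescaling this curvature is of size $\alpha |P''(\alpha\lambda_0)|$, which when integrated against the kernel $|\xi|^{-3-2s}$ over $|\xi|\le\delta$ and combined with the mean-value bound $|\Phi_\alpha'| \lesssim \alpha^{-1}$, $|\Phi_\alpha''| \lesssim \alpha^{-2}$ produces a term of the claimed order.

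**The far-region estimate.** On $|\xi| > \delta$ the kernel is integrable, so I would bound the contribution by $\|\Phi_\alpha\|_{L^\infty} \int_{|\xi|>\delta} |\xi|^{-3-2s}\,d\xi \lesssim \delta^{-2s}$, but this crude bound is not enough when $\lambda_0$ is large — there we need the far term to also decay like $(1+\lambda_0)^{-2s}$. The refinement is to use the cancellation coming from the fact that far away from the edge set $E$, $\bar\mu$ agrees with a \emph{linear} function $\ell$ of $(x,y,z)$ (the exact distance to one face of the unmollified pyramid), for which $\sLap(\Phi_\alpha\circ\ell) = \sLapx\Phi_\alpha(\ell)$ exactly. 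Thus I would write $\bar\mu = \ell + (\bar\mu - \ell)$ where $\bar\mu - \ell = -(\varphi - h_j)/\sqrt{\cdots} + (\text{lower order})$, which by Lemma \ref{varphi3} is $O(P(\lambda))$ together with derivative bounds $O(|P^{(i)}(\lambda)|)$, and expand the difference of the two nonlocal operators in this perturbation. Using the algebraic decay of $\Phi$ and $\Phi'$ from Theorem \ref{1Dexist}, together with $|P^{(i)}(\lambda)| \le C_P(1+\lambda)^{-2s+1-i}$ from Lemma \ref{P}, one checks each resulting integral decays at least like $(1+\lambda_0)^{-2s}$, picking up an extra $\alpha^{-s}$ (rather than $\alpha^{-2s}$) from the rescaling by tracking powers of $\alpha$ carefully through the homogeneity (Lemma \ref{homo}) — the point being that only \emph{one} derivative of $\Phi_\alpha$ ever multiplies the leading perturbation, so the worst power is $\alpha^{-1}$ times an $\alpha^{2s}/\alpha^{s} = \alpha^s$ from the kernel scaling against the $P$-decay.

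**Assembling the bound and the expected obstacle.** Combining near and far estimates, choosing $\delta \sim 1 + \lambda_0$ (truncated so that the mollified graph over the ball $B_\delta(x_0,y_0)$ stays within its natural $C^2$-bounds, which is exactly the content of Lemmas \ref{varphi1} and \ref{varphi3}), every piece is bounded by $C\alpha^{-s}(1+\lambda_0)^{-2s}$, and the final rewriting $(1+\lambda)^{-2s} \le C|P'(\lambda)|$ is immediate from Lemma \ref{P}. The uniformity in $z$ is automatic because $\bar\mu$ enters only through $\Phi_\alpha(\bar\mu)$ and its derivatives, all of which are bounded in $\mu$ (with $\alpha$-dependent constants already accounted for), and translating $z_0$ merely translates $\bar\mu$. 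The main obstacle I expect is the bookkeeping in the intermediate region where $\lambda_0$ is comparable to $1$ and to $r_0$: there one cannot use the clean linear approximation $\ell$ (the point may see several faces of the pyramid), so the curvature term must be estimated using the full strength of Lemma \ref{varphi3} with no simplification, and one must be careful that the near/far split radius $\delta$ and the region where $\Phi_\alpha$-derivative bounds are sharp are compatible; controlling the constant $C_{R_1}$ uniformly across $\lambda_0 \in [0,\infty)$ — rather than just in the two asymptotic regimes — is the delicate point, and it is presumably handled by the same Fermi-coordinate expansion of the fractional Laplacian advertised in the abstract, applied on a ball of $(x,y,z)$-radius $\min\{1,\lambda_0/2\}$ around the base point with the remainder estimated by the global $L^\infty$ bounds.
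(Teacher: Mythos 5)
Your overall architecture --- a near/far splitting of the singular integral, a rotation so that the linearization of $\bar\mu$ at the base point turns $\sLapx\Phi_\alpha(\bar\mu)$ into a three-dimensional integral with the same kernel, a Taylor expansion whose genuinely new contribution is the curvature $D^2\varphi$ controlled through Lemmas \ref{varphi3} and \ref{P}, and a crude $L^\infty$ bound in the far region --- is the same as the paper's. The genuine gap is your choice of splitting radius $\delta\sim 1+\lambda_0$, independent of $\alpha$. The second-order Taylor remainder in the near region contains, among others, the term $\norm[\infty]{\Phi_\alpha''}\norm[\infty]{A}^2\sim\alpha^{-2}r^4(1+\lambda)^{-4s-2}$ coming from the square of the curvature correction; integrating $r^4\cdot r^{-1-2s}$ up to $r=\delta\sim 1+\lambda$ produces a contribution of order $\alpha^{-2}(1+\lambda)^{-6s+2}$, and the linear-in-curvature term likewise produces $\alpha^{-1}(1+\lambda)^{-4s+1}$. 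Neither is $O(\alpha^{-s})$, and the quadratic one is not even $o(\alpha^{-2s})$, so it would destroy the super-solution argument in Proposition \ref{supersol}, where one needs $\alpha^{2s}R_1/S\to0$. The paper's radius is $\tfrac{\sqrt\alpha}{2}\left(C_P\abs{P'(\lambda)}\right)^{-1/(2s)}\sim\sqrt\alpha\,(1+\lambda)$: the extra factor $\sqrt\alpha$ is precisely what converts $\alpha^{-1}\delta^{2-2s}$ and $\alpha^{-2}\delta^{4-2s}$ into $\alpha^{-s}$ times decaying quantities, while the crude far-region bound $\delta^{-2s}$ then yields exactly $\alpha^{-s}\abs{P'(\lambda)}$. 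Without an $\alpha$-dependent radius the two regimes cannot be balanced, and no amount of care in the ``intermediate region'' bookkeeping you flag at the end will recover the stated power of $\alpha$.

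A secondary point: your far-field ``refinement'' via the exact linear face function $\ell$ is both unnecessary and problematic. With the correct radius, the trivial bound $\abs{\Phi_\alpha}\leq1$ integrated over the complement of the ball already gives $\alpha^{-s}(1+\lambda)^{-2s}$, which is all that is claimed (your worry that the crude bound ``is not enough'' is unfounded once $\delta\gtrsim 1+\lambda_0$, since $\delta^{-2s}\lesssim(1+\lambda_0)^{-2s}$). Conversely, the comparison with $\Phi_\alpha\circ\ell$ cannot work as described: in the far region the point $(x+\xi,y+\eta)$ may lie near a different face or near the edge set $E$, where $\bar\mu-\ell$ is not small, so the proposed perturbative expansion in $\bar\mu-\ell$ has no smallness to exploit there. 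Finally, note that the first-order term of the expansion contains a piece of the form $B(0)\,\mu^*\Phi_\alpha'(\mu^*)$ with $B(0)$ odd and of size $\abs{\xi}$, which is not absolutely integrable against the kernel; the paper removes it by inserting the compensating odd term $B(0)\,\bar\mu\Phi_\alpha'(\bar\mu)$ (which integrates to zero in the principal-value sense) and estimating the difference $G(1)-G(0)$, a step your sketch omits.
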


\begin{remark}
In fact, denoting $\varphi^\alpha(x,y)=\alpha^{-1}\varphi(\alpha{x},\alpha{y})$, the first term of $V$ can be expressed as
$$\Phi\left(\dfrac{z-\varphi^\alpha(x,y)}{\sqrt{1+\abs{\nabla\varphi^\alpha(x,y)}^2}}\right).$$
In this way, the estimate of $R_1$ gives the first term of the fractional Laplacian in the Fermi coordinates, roughly as $\sLap=(-\p_\mu^2)^s+\cdots$ if $\mu$ denotes the signed distance to the surface $z=\varphi^\alpha(x,y)$, as $\alpha\to0$.
\end{remark}

It is helpful to keep in mind that the decay in $x$ and $y$ comes from the second and third derivatives of $\varphi$. The uniformity in $z$ will follow from the facts that $z$ can be written in terms of $\bar\mu(x,y,z)$ and quantities like $\bar\mu^i\Phi_\alpha^{(j)}(\bar\mu)$ for $0\leq{i}\leq{j}$, $1\leq{j}\leq2$ are uniformly bounded.

\begin{proof}[Proof of Proposition \ref{R}]
We will prove the estimate pointwisely in $(x,y)\in\R^2$. To simplify the notations, $\bar\mu$ and $\varphi$ and their derivatives will be evaluated at $(x,y,z)$ or $(x,y)$ unless otherwise specified.

First we write down the difference. By definition, we have
\begin{equation*}\begin{split}
&\quad\;\sLap(\Phi_\alpha(\bar\mu))\\
&=C_{3,s}\PV\int_{\R^3}\dfrac{\Phi_\alpha(\bar\mu)-\Phi_\alpha(\bar\mu(x+\xi,y+\eta,z+\zeta))}{(\xi^2+\eta^2+\zeta^2)^{\frac32+s}}\,d{\xi}d{\eta}d{\zeta}\\
&=C_{3,s}\PV\int_{\R^3}\!\left[\Phi_\alpha(\bar\mu)-\Phi_\alpha\left(\dfrac{z+\zeta-\varphi(x+\xi,y+\eta)}{\sqrt{1+\abs{\nabla\varphi(x+\xi,y+\eta)}^2}}\right)\right]\dfrac{d{\xi}d{\eta}d{\zeta}}{(\xi^2+\eta^2+\zeta^2)^{\frac32+s}}
\end{split}\end{equation*}

\COMMENT{}

On the other hand, by Corollary \ref{1var}, we have
\begin{equation*}\begin{split}
\sLapx\Phi_\alpha(\bar\mu)
&=C_{1,s}\PV\int_{\R}\!\dfrac{\Phi_\alpha(\bar\mu)-\Phi_\alpha(\bar\mu+\mu)}{\abs{\mu}^{1+2s}}\,d\mu\\
&=C_{3,s}\PV\int_{\R^3}\!\dfrac{\Phi_\alpha(\bar\mu)-\Phi_\alpha(\bar\mu+\mu)}{(\mu^2+\nu^2+\theta^2)^{\frac32+s}}\,d{\mu}d{\nu}d{\theta}
\end{split}\end{equation*}
In order to rotate the axes to the tangent and normal directions of the graph $z=\varphi(x,y)$, we wish to find an orthogonal linear transformation
$\begin{pmatrix}\mu\\\nu\\\theta\end{pmatrix}=T\begin{pmatrix}\zeta\\\xi\\\eta\end{pmatrix}$
that sends $\mu$ to
$\dfrac{\zeta-\varphi_{x}\xi-\varphi_{y}\eta}{\sqrt{1+\varphi_x^2+\varphi_y^2}}$.
To this end, we apply Gram-Schmidt process to the basis
$$\dfrac{(1,-\varphi_x,-\varphi_y)}{\sqrt{1+\varphi_x^2+\varphi_y^2}},\quad\dfrac{(\varphi_x,1,0)}{\sqrt{1+\varphi_x^2}},\quad\dfrac{(\varphi_y,0,1)}{\sqrt{1+\varphi_y^2}}$$
of $\R^3$ to obtain the orthonormal vectors
$$\dfrac{(1,-\varphi_x,-\varphi_y)}{\sqrt{1+\varphi_x^2+\varphi_y^2}},\quad\dfrac{(\varphi_x,1,0)}{\sqrt{1+\varphi_x^2}},\quad\dfrac{(\varphi_y,-\varphi_x\varphi_y,1+\varphi_x^2)}{\sqrt{1+\varphi_x^2}\sqrt{1+\varphi_x^2+\varphi_y^2}}.$$
It is easy to check that
\begin{equation*}
T=\begin{pmatrix}
\dfrac{1}{\sqrt{1+\varphi_x^2+\varphi_y^2}}&\dfrac{-\varphi_x}{\sqrt{1+\varphi_x^2+\varphi_y^2}}&\dfrac{-\varphi_y}{\sqrt{1+\varphi_x^2+\varphi_y^2}}\\
\dfrac{\varphi_x}{\sqrt{1+\varphi_x^2}}&\dfrac{1}{\sqrt{1+\varphi_x^2}}&0\\
\dfrac{\varphi_y}{\sqrt{1+\varphi_x^2}\sqrt{1+\varphi_x^2+\varphi_y^2}}&\dfrac{-\varphi_x\varphi_y}{\sqrt{1+\varphi_x^2}\sqrt{1+\varphi_x^2+\varphi_y^2}}&\sqrt{\dfrac{1+\varphi_x^2}{1+\varphi_x^2+\varphi_y^2}}
\end{pmatrix}
\end{equation*}
indeed satisfies the required condition.

Under this transformation, we have
\begin{equation*}\begin{split}
&\quad\;\sLapx\Phi_\alpha(\bar\mu)\\
&=C_{3,s}\PV\int_{\R^3}\left[\Phi_\alpha(\bar\mu)-\Phi_\alpha\left(\bar\mu+\dfrac{\zeta-\varphi_{x}\xi-\varphi_{y}\eta}{\sqrt{1+\abs{\nabla\varphi}^2}}\right)\right]\dfrac{d{\zeta}d{\xi}d{\eta}}{(\zeta^2+\xi^2+\eta^2)^{\frac32+s}}\\
&=C_{3,s}\PV\int_{\R^3}\left[\Phi_\alpha(\bar\mu)-\Phi_\alpha\left(\dfrac{z+\zeta-\varphi-\varphi_{x}\xi-\varphi_{y}\eta}{\sqrt{1+\abs{\nabla\varphi}^2}}\right)\right]\dfrac{d{\zeta}d{\xi}d{\eta}}{(\zeta^2+\xi^2+\eta^2)^{\frac32+s}}
\end{split}\end{equation*}
The difference is therefore
\begin{equation*}\begin{split}
&\quad\,\,\sLap(\Phi_\alpha(\bar\mu))-\sLapx\Phi_\alpha(\bar\mu)\\
&=C_{3,s}\PV\int_{\R^3}\!\Bigg[-\Phi_\alpha\left(\dfrac{z+\zeta-\varphi(x+\xi,y+\eta)}{\sqrt{1+\abs{\nabla\varphi(x+\xi,y+\eta)}^2}}\right)\\
&\qquad\qquad\qquad\qquad+\Phi_\alpha\left(\dfrac{z+\zeta-\varphi-\varphi_{x}\xi-\varphi_{y}\eta}{\sqrt{1+\abs{\nabla\varphi}^2}}\right)\Bigg]\,\dfrac{d{\xi}d{\eta}d{\zeta}}{(\xi^2+\eta^2+\zeta^2)^{\frac32+s}}\\
\end{split}\end{equation*}

We will estimate this integral in
$$D_\alpha=\set{(\xi,\eta,\zeta)\in\R^3\,\Bigg|\,\sqrt{\xi^2+\eta^2+\zeta^2}<\dfrac{\sqrt{\alpha}}{2}\left(C_P\abs{P'(\lambda)}\right)^{-\frac{1}{{2s}}}}$$
and its complement $D_\alpha^c$ in $\R^3$
and write
\begin{equation*}\begin{split}
\sLap(\Phi_\alpha(\bar\mu))-\sLapx\Phi_\alpha(\bar\mu)=C_{3,s}(J_1+J_2),
\end{split}\end{equation*}
where
\begin{equation}\label{J1}\begin{split}
J_1
&=\int_{D_\alpha}\!\Bigg[-\Phi_\alpha\left(\dfrac{z+\zeta-\varphi(x+\xi,y+\eta)}{\sqrt{1+\abs{\nabla\varphi(x+\xi,y+\eta)}^2}}\right)\\
&\qquad\qquad\quad+\Phi_\alpha\left(\dfrac{z+\zeta-\varphi-\varphi_{x}\xi-\varphi_{y}\eta}{\sqrt{1+\abs{\nabla\varphi}^2}}\right)\\
&\qquad\qquad\quad-\bar\mu\Phi_\alpha'(\bar\mu)\dfrac{\varphi_{x}\varphi_{xx}\xi+\varphi_{y}\varphi_{yy}\eta}{1+\abs{\nabla\varphi}^2}\Bigg]\,\dfrac{d{\xi}d{\eta}d{\zeta}}{(\xi^2+\eta^2+\zeta^2)^{\frac32+s}}\\
\end{split}\end{equation}
and
\begin{equation*}\begin{split}
J_2
&=\int_{D_\alpha^c}\!\Bigg[-\Phi_\alpha\left(\dfrac{z+\zeta-\varphi(x+\xi,y+\eta)}{\sqrt{1+\varphi'(x+\xi,y+\eta)^2}}\right)\\
&\qquad\qquad+\Phi_\alpha\left(\dfrac{z+\zeta-\varphi-\varphi_{x}\xi-\varphi_{y}\eta}{\sqrt{1+\abs{\nabla\varphi}^2}}\right)\Bigg]\,\dfrac{d{\xi}d{\eta}d{\zeta}}{(\xi^2+\eta^2+\zeta^2)^{\frac32+s}}\\
\end{split}\end{equation*}
Note that the extra term in $J_1$, being odd in $\xi$ and $\eta$, is inserted to get rid of the principal value.

Since $\abs{\Phi_\alpha}\leq1$, it is easy to see that
\begin{equation*}\begin{split}
\abs{J_2}
&\leq2\int_{D_\alpha^c}\!\,\dfrac{d{\xi}d{\eta}d{\zeta}}{(\xi^2+\eta^2+\zeta^2)^{\frac32+s}}\\
&=8\pi\int_{\frac{\sqrt{\alpha}}{2}\left(C_P\abs{P'(\lambda)}\right)^{-\frac{1}{{2s}}}}^{\infty}\dfrac{r^2\,dr}{r^{3+2s}}\\
&=\dfrac{4\pi}{s}\left(\dfrac{\sqrt{\alpha}}{2}\left(C_P\abs{P'(\lambda)}\right)^{-\frac{1}{{2s}}}\right)^{-2s}\\
&=\dfrac{2^{2+2s}\pi}{s}C_P\alpha^{-s}\abs{P'(\lambda)}\\
&\leq\dfrac{2^{2+2s}\pi}{s}C_P^2\alpha^{-s}(1+\lambda)^{-2s}
\end{split}\end{equation*}
by using Lemma \ref{P}.

To estimate $J_1$, we let $F(t)=F(t,\xi,\eta,\zeta;x,y,z,\alpha)=-\Phi_\alpha(\mu^*(t))$, where
\begin{equation*}\begin{split}
\mu^*(t)
&=\mu^*(t,\xi,\eta,\zeta;x,y,z)\\
&=\dfrac{z+\zeta-(1-t)(\varphi+\varphi_{x}\xi+\varphi_{y}\eta)-t\varphi(x+\xi,y+\eta)}{\sqrt{1+\abs{\nabla\varphi(x+t\xi,y+t\eta)}^2}}.
\end{split}\end{equation*}
Let us introduce the shorthand notation
$$\varphi^t=\varphi(x+t\xi,y+t\eta),\quad(\nabla\varphi)^t=\nabla\varphi(x+t\xi,y+t\eta)$$
and similarly for other derivatives of $\varphi$ for $t\in[0,1]$. So we can rewrite
$$\mu^*(t)=\dfrac{z+\zeta-(1-t)(\varphi+\varphi_{x}\xi+\varphi_{y}\eta)-t\varphi^1}{\sqrt{1+\abs{(\nabla\varphi)^t}^2}}.$$
We will use the second order Taylor expansion to write the first two terms in the integrand (considering the denominator as a weight) as
$$F(1)-F(0)=F'(0)+\int_0^1(1-t)F''(t)\,dt.$$
The derivative of $\mu^*(t)$ can be written as
$$\mu_t^*(t)=-A(t)-B(t)\mu^*(t),$$
where
\begin{equation*}\begin{split}
A(t)=A(t,\xi,\eta;x,y)
&=\dfrac{\varphi^1-\varphi-\varphi_{x}\xi-\varphi_{y}\eta}{\sqrt{1+\abs{(\nabla\varphi)^t}^2}}\\
&=\dfrac{\displaystyle\int_0^1(1-t')\left(\varphi_{xx}^{t'}\xi^2+2\varphi_{xy}^{t'}\xi\eta+\varphi_{yy}^{t'}\eta^2\right)\,dt'}{\sqrt{1+\abs{(\nabla\varphi)^t}^2}}\\
B(t)=B(t,\xi,\eta;x,y)
&=\dfrac{(\varphi_x^t\varphi_{xx}^t+\varphi_y^t\varphi_{xy}^t)\xi+(\varphi_x^t\varphi_{xy}^t+\varphi_y^t\varphi_{yy}^t)\eta}{1+\abs{(\nabla\varphi)^t}^2}\\
&=\dfrac{\left(\abs{(\nabla\varphi)^t}^2\right)_x\xi+\left(\abs{(\nabla\varphi)^t}^2\right)_y\eta}{2\left(1+\abs{(\nabla\varphi)^t}^2\right)}.
\end{split}\end{equation*}
Note that $A_t(t)=-A(t)B(t)$ and
\begin{equation*}\begin{split}
B_t(t)
&=\left(\dfrac{\left(\abs{(\nabla\varphi)^t}^2\right)_{xx}}{2\left(1+\abs{(\nabla\varphi)^t}^2\right)}-\left(\dfrac{\left(\abs{(\nabla\varphi)^t}^2\right)_x}{1+\abs{(\nabla\varphi)^t}^2}\right)^2\right)\xi^2\\
&\qquad+\left(\dfrac{\left(\abs{(\nabla\varphi)^t}^2\right)_{xy}}{1+\abs{(\nabla\varphi)^t}^2}-\dfrac{2\left(\abs{(\nabla\varphi)^t}^2\right)_x\left(\abs{(\nabla\varphi)^t}^2\right)_y}{\left(1+\abs{(\nabla\varphi)^t}^2\right)^2}\right)\xi\eta\\
&\qquad+\left(\dfrac{\left(\abs{(\nabla\varphi)^t}^2\right)_{yy}}{2\left(1+\abs{(\nabla\varphi)^t}^2\right)}-\left(\dfrac{\left(\abs{(\nabla\varphi)^t}^2\right)_y}{1+\abs{(\nabla\varphi)^t}^2}\right)^2\right)\eta^2,
\end{split}\end{equation*}
each of whose term containing at least a third derivative or a product of two second derivatives of $\varphi$, evaluated at $(x+t\xi,y+t\eta)$.

We compute
\begin{equation*}\begin{split}
F'
&=-\Phi_\alpha'(\mu^*)\mu_t^*,\\
F''
&=-\Phi_\alpha''(\mu^*)(\mu_t^*)^2-\Phi_\alpha'(\mu^*)\mu_{tt}^*,\\
(\mu_t^*)^2
&=(A+B\mu^*)^2\\
&=A^2+2AB\mu^*+B^2(\mu^*)^2,\\
\mu_{tt}^*
&=-A_t-B_t\mu^*-B\mu_t^*\\
&=AB-B_t\mu^*+B(A+B\mu^*)\\
&=2AB+(B^2-B_t)\mu^*,\\
F'(0)
&=A(0)\Phi_\alpha'(\mu^*(0))+B(0)\mu^*(0)\Phi_\alpha'(\mu^*(0)).
\end{split}\end{equation*}
Observe that the extra term in the integrand is chosen in such a way
$$-\bar\mu\Phi_\alpha'(\bar\mu)\dfrac{(\varphi_{x}\varphi_{xx}+\varphi_{y}\varphi_{xy})\xi+(\varphi_{x}\varphi_{xy}+\varphi_{y}\varphi_{yy})\eta}{1+\abs{\nabla\varphi}^2}=-B(0)\bar\mu\Phi_\alpha'(\bar\mu),$$
that is similar to the second term in $F'(0)$. In fact, if we let $$\tilde\mu(t)=\tilde\mu(t,\xi,\eta,\zeta;x,y,z)=\bar\mu+t(\mu^*(0)-\bar\mu)$$
and
\begin{equation*}\begin{split}
G(t)
&=G(t,\xi,\eta,\zeta;x,y,z,\alpha)\\
&=\tilde\mu(t)\Phi_\alpha'(\tilde\mu(t))\\
&=(\bar\mu+t(\mu^*(0)-\bar\mu))\Phi_\alpha'(\bar\mu+t(\mu^*(0)-\bar\mu)),\\
\end{split}\end{equation*}
then
\begin{equation*}\begin{split}
&\quad\,\,\mu^*(0)\Phi_\alpha'(\mu^*(0))-\bar\mu\Phi_\alpha'(\bar\mu)\\
&=G(1)-G(0)\\
&=\int_0^1G'(t)\,dt\\
&=(\mu^*(0)-\bar\mu)\int_0^1\left(\Phi_\alpha'(\tilde\mu(t))+\tilde\mu(t)\Phi_\alpha''(\tilde\mu(t))\right)\,dt\\
&=\dfrac{\zeta-\varphi_{x}\xi-\varphi_{y}\eta}{\sqrt{1+\abs{\nabla\varphi}^2}}\int_0^1\left(\Phi_\alpha'(\tilde\mu(t))+\tilde\mu(t)\Phi_\alpha''(\tilde\mu(t))\right)\,dt
\end{split}\end{equation*}
and hence
\begin{equation*}\begin{split}
&\quad\,\,F'(0)-B(0)\bar\mu\Phi_\alpha'(\bar\mu)\\
&=A(0)\Phi_\alpha'(\mu^*(0))\\
&\qquad+B(0)\dfrac{\zeta-\varphi_{x}\xi-\varphi_{y}\eta}{\sqrt{1+\abs{\nabla\varphi}^2}}\int_0^1\!\left(\Phi_\alpha'(\tilde\mu(t))+\tilde\mu(t)\Phi_\alpha''(\tilde\mu(t))\right)\,dt.
\end{split}\end{equation*}
The integrand in \eqref{J1} now has the expression
\begin{equation*}\begin{split}
&\quad\,\,-\Phi_\alpha\left(\dfrac{z+\zeta-\varphi^1}{\sqrt{1+\abs{(\nabla\varphi)^1}^2}}\right)+\Phi_\alpha\left(\dfrac{z+\zeta-\varphi-\varphi_{x}\xi-\varphi_{y}\eta}{\sqrt{1+\abs{\nabla\varphi}^2}}\right)\\
&\qquad\,\,-\bar\mu\Phi_\alpha'(\bar\mu)\dfrac{\varphi_{x}\varphi_{xx}\xi+\varphi_{y}\varphi_{yy}\eta}{1+\abs{\nabla\varphi}^2}\\
&=F(1)-F(0)-B(0)\bar\mu\Phi_\alpha'(\bar\mu)\\
&=F'(0)-B(0)\bar\mu\Phi_\alpha'(\bar\mu)+\int_0^1\!(1-t)F''(t)\,dt\\
&=A(0)\Phi_\alpha'(\mu^*(0))+B(0)\dfrac{\zeta-\varphi_{x}\xi-\varphi_{y}\eta}{\sqrt{1+\abs{\nabla\varphi}^2}}\int_0^1\!(\Phi_\alpha'(\tilde\mu)+\tilde\mu\Phi_\alpha''(\tilde\mu))\,dt\\
&\quad\,\,+\int_0^1\!(1-t)\Big[-\Phi_\alpha''(\mu^*)\left(A^2+2AB\mu^*+B^2(\mu^*)^2\right)\\
&\qquad\qquad\qquad\qquad-\Phi_\alpha'(\mu^*)\left(2AB+\left(B^2-B_t\right)\mu^*\right)\Big]\,dt
\end{split}\end{equation*}

In $D_\alpha\subset{D_1}$, we have $$\sqrt{\xi^2+\eta^2+\zeta^2}\leq\dfrac{1}{2}\left(C_P\abs{P'(\lambda)}\right)^{-\frac{1}{{2s}}}\leq\dfrac{1+\lambda}{2}$$
and hence
$$1+\lambda(x+t\xi,y+t\eta)\geq1+\lambda(x,y)-t\sqrt{\xi^2+\eta^2}\geq\dfrac{1+\lambda(x,y)}{2}.$$
We use this to bound, using Lemma \ref{varphi3} and Lemma \ref{P},
\begin{equation*}\begin{split}
\abs{\p_x^{i_1}\p_y^{i_2}\varphi(x+t\xi,y+t\eta)}
&\leq{C_\varphi}\abs{P^{(i_1+i_2)}(\lambda(x+t\xi,y+t\eta))}\\
&\leq{C_\varphi}{C_P}(1+\lambda(x+t\xi,y+t\eta))^{-{2s}+1-i_1-i_2}\\
&\leq{C_\varphi}{C_P}2^{{2s}-1+i_1+i_2}(1+\lambda(x,y))^{-{2s}+1-i_1-i_2}\\
&\leq16{C_\varphi}{C_P}(1+\lambda)^{-{2s}+1-i_1-i_2}.
\end{split}\end{equation*}
This gives an estimate for $A$, $B$ and $B_t$
\begin{equation*}\begin{split}
\abs{A(t)}
&\leq8{C_\varphi}{C_P}\left(\abs{\xi}^2+2\abs{\xi}\abs{\eta}+\abs{\eta}^2\right)(1+\lambda)^{-{2s}-1},\\
\abs{B(t)}
&\leq32{m_*}{C_\varphi}{C_P}(\abs{\xi}+\abs{\eta})(1+\lambda)^{-{2s}-1},\\
\abs{B_t(t)}
&\leq\tilde{C}_7(s,m_*,\tilde\rho,\gamma)\left(\abs{\xi}^2+\abs{\xi}\abs{\eta}+\abs{\eta}^2\right)(1+\lambda)^{-{2s}-2},
\end{split}\end{equation*}
which are all uniform in $t\in[0,1]$. Note that for the estimate of $B_t$ we have used \eqref{r0}. Denoting
$$r=\sqrt{\xi^2+\eta^2+\zeta^2}<1\qquad\text{and}\qquad\norm[\infty]{A}=\displaystyle\sup_{t\in[0,1]}\abs{A(t)},$$
similarly for $B$ and $B_t$, we can find a constant $C_{AB}=C_{AB}(s,m_*,\tilde\rho,\gamma)$ such that
\begin{equation*}\begin{split}
\norm[\infty]{A}
&\leq{C}_{AB}r^2(1+\lambda)^{-{2s}-1},\\
2\norm[\infty]{B}
&\leq{C}_{AB}r(1+\lambda)^{-{2s}-1},\\
\norm[\infty]{A}^2
&\leq{C}_{AB}r^4(1+\lambda)^{-{4s}-2},\\
4\norm[\infty]{A}\norm[\infty]{B}
&\leq{C}_{AB}r^2(1+\lambda)^{-{4s}-1},\\
2\norm[\infty]{B}^2
&\leq{C}_{AB}r^2(1+\lambda)^{-{4s}-2},\\
\norm[\infty]{B_t}
&\leq{C}_{AB}r^2(1+\lambda)^{-{2s}-2}.
\end{split}\end{equation*}
Note also that by Cauchy-Schwarz inequality,
$$\dfrac{\zeta-\varphi_{x}\xi-\varphi_{y}\eta}{\sqrt{1+\abs{\nabla\varphi}^2}}\leq\sqrt{\xi^2+\eta^2+\zeta^2}=r.$$

Denoting $\norm[\infty]{\mu^i\Phi_\alpha^{(j)}}=\displaystyle\sup_{\mu\in\R}\,\abs{\mu^i\Phi_\alpha^{(j)}(\mu)}$ for $0\leq{i}\leq{j}\leq2$ and using Corollary \ref{Phi''}, we can estimate $J_1$ as
\begin{equation*}\begin{split}
\abs{J_1}
&\leq\int_{D_\alpha}\!\Big[\abs{A(0)}\norm[\infty]{\Phi_\alpha'}+\abs{B(0)}r\left(\norm[\infty]{\Phi_\alpha'}+\norm[\infty]{\mu\Phi_\alpha''}\right)\\
&\qquad\qquad+\norm[\infty]{\Phi_\alpha''}\norm[\infty]{A}^2+2\norm[\infty]{A}\norm[\infty]{B}(\norm[\infty]{\Phi_\alpha'}+\norm[\infty]{\mu\Phi_\alpha''})\\
&\qquad\qquad+\norm[\infty]{B}^2(\norm[\infty]{\mu\Phi_\alpha'}+\norm[\infty]{\mu^2\Phi_\alpha''})\\
&\qquad\qquad+\norm[\infty]{B_t}\norm[\infty]{\mu\Phi_\alpha'}\Big]\,\dfrac{d{\xi}d{\eta}d{\zeta}}{(\xi^2+\eta^2+\zeta^2)^{\frac32+s}}\\
&\leq4\pi{C_{AB}}{C_\Phi}(1+\lambda)^{-{2s}-1}\int_{0}^{\frac{\sqrt{\alpha}}{2}\left(C_P\abs{P'(\lambda)}\right)^{-\frac{1}{{2s}}}}\!\Bigg[\alpha^{-1}(r^2+r^2)\\
&\qquad\qquad+\alpha^{-2}r^4(1+\lambda)^{-{2s}-1}+\alpha^{-1}r^2(1+\lambda)^{-{2s}}+r^2(1+\lambda)^{-{2s}-1}\\
&\qquad\qquad+r^2(1+\lambda)^{-1}\Bigg]\dfrac{r^2\,dr}{r^{3+2s}}\\
&\leq4\pi{C_{AB}}{C_\Phi}(1+\lambda)^{-{2s}-1}\int_{0}^{\frac{\sqrt{\alpha}}{2}\left(C_P\abs{P'(\lambda)}\right)^{-\frac{1}{{2s}}}}\!\Big[\alpha^{-2}(1+\lambda)^{-{2s}-1}r^{3-2s}\\
&\hspace{7.5cm}+5\alpha^{-1}r^{1-2s}\Big]\,dr\\
&=\dfrac{4\pi{C_{AB}}{C_\Phi}}{4-2s}\alpha^{-2}(1+\lambda)^{-{4s}-2}\left(\dfrac{\sqrt\alpha}{2}\right)^{4-2s}(C_P\abs{P'(\lambda)})^{-\frac{4-2s}{{2s}}}\\
&\qquad+\dfrac{20\pi{C_{AB}}{C_\Phi}}{2-2s}\alpha^{-1}(1+\lambda)^{-{2s}-1}\left(\dfrac{\sqrt\alpha}{2}\right)^{2-2s}(C_P\abs{P'(\lambda)})^{-\frac{2-2s}{{2s}}}\\
&=\dfrac{\pi{C_{AB}}{C_\Phi}}{2^{3-2s}(2-s)}\alpha^{-s}(1+\lambda)^{-{6s}+2}
+\dfrac{5\pi{C_{AB}}{C_\Phi}}{2^{1-2s}(1-s)}\alpha^{-s}(1+\lambda)^{-{4s}+1}\\
&=\dfrac{5\pi{C_{AB}}{C_\Phi}}{2^{1-2s}(1-s)}\alpha^{-s}(1+\lambda)^{-{4s}+1}\left(1+\dfrac{1-s}{20(2-s)}(1+\lambda)^{-{2s}+1}\right)\\
&\leq\dfrac{2^{2s}5\pi{C_{AB}}{C_\Phi}}{1-s}\alpha^{-s}(1+\lambda)^{-{4s}+1}
\end{split}\end{equation*}
The proof is completed by taking $$C_{R_1}=\dfrac{2^{2+2s}\pi}{s}C_P^2+\dfrac{2^{2s}5\pi}{1-s}{C_{AB}}{C_\Phi}.$$
\end{proof}


We can now prove

\begin{prop}\label{supersol}
There exist small parameters $\eps$ and $\alpha$ such that $V(x,y,z)$ is a super-solution of (\ref{3D}). In fact, $V\in{C}_s^{2}(\R^3)$ and
$${\mathcal L}[V]>0 \quad\text{and}\quad v<V \quad\text{in}~\R^3.$$
\end{prop}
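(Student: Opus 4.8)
The plan is to compute $\mathcal{L}[V]$ pointwise and display it as $S(\alpha x,\alpha y)$ times a bracket that is made strictly positive by first choosing $\eps$ small and then $\alpha$ small. Membership $V\in C^2_s(\R^3)$ is immediate: $\Phi(\hat\mu)$ is bounded and of class $C^2$ by Theorem \ref{1Dexist} and Lemma \ref{varphi1}, $S(\alpha x,\alpha y)$ is bounded and $C^2$ by Lemmas \ref{varphi1} and \ref{S}, and any bounded $C^2$ function lies in $C^2_s(\R^3)$. By linearity of $\sLap$ and the homogeneity in Lemma \ref{homo}, applied to the rescalings $\Phi(\hat\mu)(x,y,z)=\Phi_\alpha(\bar\mu(\alpha x,\alpha y,\alpha z))$ and to the ($z$-independent) function $S(\alpha x,\alpha y)$, one has $\sLap(\Phi(\hat\mu))-(\sLapx\Phi)(\hat\mu)=\alpha^{2s}R_1(\alpha x,\alpha y,\alpha z;\alpha)$ and $\eps\sLap(S(\alpha\cdot,\alpha\cdot))=\alpha^{2s}R_2$ with $R_2=\eps(\sLap S)(\alpha x,\alpha y)$. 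Using \eqref{1D} to replace $(\sLapx\Phi)(\hat\mu)$ by $k\Phi'(\hat\mu)+f(\Phi(\hat\mu))$, the identity $cV_z=(S(\alpha x,\alpha y)+k)\Phi'(\hat\mu)$ coming from the definition \eqref{def S} of $S$, and Taylor expanding $f(V)$ about $\Phi(\hat\mu)$, all terms cancel except
\[
\mathcal{L}[V]=S(\alpha x,\alpha y)\left(-\Phi'(\hat\mu)-\eps\int_0^1 f'\bigl(\Phi(\hat\mu)+t\eps S\bigr)\,dt+\frac{\alpha^{2s}(R_1+R_2)}{S(\alpha x,\alpha y)}\right);
\]
since $S>0$ by Lemma \ref{varphi1}, it suffices to make the bracket positive.

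By Proposition \ref{R}, $|R_1(\alpha x,\alpha y,\alpha z;\alpha)|\le C_{R_1}\alpha^{-s}(1+\lambda(\alpha x,\alpha y))^{-2s}$, and by Lemma \ref{S}, $|R_2|\le\eps C_S(1+\lambda(\alpha x,\alpha y))^{-2s}$ and $S(\alpha x,\alpha y)\ge C_S^{-1}C_P^{-1}(1+\lambda(\alpha x,\alpha y))^{-2s}$; hence the last term of the bracket is bounded in absolute value by $C_*\alpha^s$, uniformly in $(x,y,z)$, with $C_*$ depending only on $s,\tilde\rho,\gamma$. For the first two terms, the bistable structure \eqref{bistable} ($f'(\pm1)<0$) provides $\delta_1,\delta_2>0$ with $f'\le-\delta_1$ on the $\delta_2$-neighbourhoods of $\pm1$. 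Choosing $M>0$ large, $\Phi(\hat\mu)$ lies within $\delta_2/2$ of $-1$ for $\hat\mu\ge M$ and within $\delta_2/2$ of $+1$ for $\hat\mu\le-M$; then for $\eps$ with $\eps(c-k)<\delta_2/2$ and $|\hat\mu|\ge M$ the argument $\Phi(\hat\mu)+t\eps S$ stays in one of those neighbourhoods, so $-\eps\int_0^1 f'\ge\eps\delta_1$, while for $|\hat\mu|\le M$ one has $-\Phi'(\hat\mu)\ge\delta_3:=\min_{|\mu|\le M}(-\Phi'(\mu))>0$ and $\bigl|\eps\int_0^1 f'\bigr|\le\eps C_1$ with $C_1$ a fixed bound for $|f'|$ on a bounded interval. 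Shrinking $\eps$ so that $\eps C_1<\delta_3/2$ and $\eps\delta_1\le\delta_3/2$, the first two terms sum to at least $\eps\delta_1$ everywhere; taking then $\alpha$ so small that $C_*\alpha^s<\eps\delta_1$ gives $\mathcal{L}[V]>0$. These inequalities fix \eqref{param eps} and one constraint in \eqref{param alpha}.

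For $v<V$: since $\eps S(\alpha x,\alpha y)>0$ it is enough to show $\Phi\bigl(\tfrac kc(z-h)\bigr)-\Phi(\hat\mu)<\eps S(\alpha x,\alpha y)$. Writing $\varphi^\alpha(x,y)=\alpha^{-1}\varphi(\alpha x,\alpha y)$ and $w=\sqrt{1+|\nabla\varphi^\alpha(x,y)|^2}$, the definition \eqref{def S} gives $m_*^2-|\nabla\varphi^\alpha|^2=S(\alpha x,\alpha y)\,w(c+kw)$, hence $c-kw=k^2S(\alpha x,\alpha y)\,w$ and the identity
\[
\tfrac kc(z-h)=\tfrac{kw}{c}\,\hat\mu+\tfrac kc\bigl(\varphi^\alpha(x,y)-h(x,y)\bigr).
\]
Since $kw/c<1$ (by $|\nabla\varphi^\alpha|<m_*$, Lemma \ref{varphi1}) and $\varphi^\alpha>h$ ($h$ being positively homogeneous, same lemma), this forces $\tfrac kc(z-h)\ge\hat\mu$, so $\Phi(\tfrac kc(z-h))-\Phi(\hat\mu)\le0<\eps S$, unless $\hat\mu>0$ is large. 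In that remaining regime, writing $a=kw/c$ and $\beta=\tfrac kc(\varphi^\alpha-h)$, the left side equals $\int_{a\hat\mu+\beta}^{\hat\mu}(-\Phi'(t))\,dt$, an integral over an interval contained in $[a\hat\mu+\beta,\infty)$ of length $(1-a)\hat\mu-\beta$; combining this with the algebraic decay $-\Phi'(t)=O(|t|^{-1-2s})$ (Theorem \ref{1Dexist}) yields
\[
\Phi\Bigl(\tfrac kc(z-h)\Bigr)-\Phi(\hat\mu)\le C\,k\,S(\alpha x,\alpha y)\Bigl(1+\tfrac{\varphi^\alpha(x,y)-h(x,y)}{k\,S(\alpha x,\alpha y)\,w}\Bigr)^{-2s}.
\]
By Lemma \ref{varphi3} ($\varphi^\alpha-h\ge\alpha^{-1}C_\varphi^{-1}P(\lambda(\alpha x,\alpha y))$), Lemma \ref{S} ($S(\alpha x,\alpha y)\le C_S|P'(\lambda(\alpha x,\alpha y))|$), $w<c/k$ and Lemma \ref{P}, the ratio inside is $\ge c_3\alpha^{-1}$ with $c_3>0$ independent of $(x,y)$, so the right side is at most $Ck(1+c_3\alpha^{-1})^{-2s}S(\alpha x,\alpha y)$, which is $<\eps S(\alpha x,\alpha y)$ once $\alpha$ is small relative to the already fixed $\eps$. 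Imposing this last constraint on $\alpha$ completes \eqref{param alpha}.

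I expect the main obstacle to be the inequality $v<V$: the naive pointwise comparison of the arguments of $\Phi$ fails because $1/w>k/c$, so $\hat\mu$ overtakes $\tfrac kc(z-h)$ at intermediate heights and $\Phi(\hat\mu)$ dips below $v$ there; one must quantify this dip through the algebraic decay of $\Phi'$ and show it is absorbed by $\eps S$, using that the gap $\varphi^\alpha-h$ beats $S$ by a factor $\alpha^{-1}$. Keeping this estimate uniform in $(x,y,z)$ — both near the projected edge set $E$, where $S$ and $\varphi^\alpha-h$ are comparable positive quantities, and far from it, where both decay — is the delicate point, and it is exactly what dictates the order of choices $\alpha\ll\eps\ll1$.
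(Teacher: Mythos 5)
Your proposal is correct and follows essentially the same route as the paper: the same factorization $\mathcal{L}[V]=S(\alpha x,\alpha y)\cdot(\cdots)$ with the nonlocal remainder controlled by Proposition \ref{R} and Lemma \ref{S}, the same order of choices ($\eps$ first via the bistability/monotonicity dichotomy on $\hat\mu$, then $\alpha$), and the same two-case argument for $v<V$ (either the arguments of $\Phi$ are favourably ordered, or $\hat\mu\gtrsim\omega/\alpha$ and the dip is absorbed into $\eps S$ through the decay of $\mu\Phi'(\mu)$). The only blemish is the harmless slip $c-kw=k^2Sw$ (it should read $c-kw=Sw$), which changes constants but not the order of any estimate.
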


\begin{proof}
We recall the calculations in Section \ref{sect motiv} gives
$$\mathcal{L}[V]=S(\alpha{x},\alpha{y})\left(-\Phi'(\hat\mu)-\eps\int_0^1\!f'(\Phi(\hat\mu)+t\eps{S})\,dt+\alpha^{2s}\dfrac{\bar{R}(\alpha{x},\alpha{y})}{S(\alpha{x},\alpha{y})}\right),$$
where $S$ is defined by \eqref{def S} and $\bar{R}=R_1+R_2$ with
\begin{equation*}\begin{split}
R_1
&=\sLap(\Phi_\alpha(\bar\mu(\alpha{x},\alpha{y},\alpha{z})))-(\sLapx\Phi_\alpha)(\bar\mu(\alpha{x},\alpha{y},\alpha{z})),\\
R_2
&=\eps(\sLap{S})(\alpha{x},\alpha{y}).
\end{split}\end{equation*}
By Proposition \ref{R} and Lemma \ref{S}, there is a constant $C_R=C_R(s,c,m_*,\tilde\rho,\gamma)$ such that
$${\mathcal{L}}[V]\geq{S}(\alpha{x},\alpha{y})\left(-\Phi'(\hat\mu)-\eps\int_0^1\!f'(\Phi(\hat\mu)+t\eps{S})\,dt-C_R\alpha^{s}\right).$$
The rest of the proof is similar to \cite{taniguchi1}. Here we still include it for the sake of completeness.

Since $f$ is $C^1$ and $f'(\pm1)<0$, there exists constants $\delta_*\in(0,1/4)$ and $\kappa_1>0$ such that
$$-f'(t)>\kappa_1\quad\text{for}\quad1-\abs{t}<2\delta_*.$$
Write $\kappa_2=\norm[{L^\infty(-1-\delta_*,1+\delta_*)}]{f'}$ and
$$\Phi_*=\min\set{-\Phi'(\mu)\mid-1+\delta_*\leq\Phi(\mu)\leq1-\delta_*}.$$
Also, in view of Lemma \ref{varphi3} and Lemma \ref{S},
$$\dfrac{\varphi(x,y)-h(x,y)}{S(x,y)}\geq\dfrac{{C_\varphi^{-1}}P(\lambda)}{C_S\abs{P'(\lambda)}}\geq{C_\varphi^{-1}}{C_S^{-1}}{C_P^{-2}}(1+\lambda)^{-1}$$
for all $(x,y)\in\R^2$, so we can write
$$\omega=\min_{(x,y)\in\R^2}\dfrac{\varphi(x,y)-h(x,y)}{S(x,y)}\geq{C_\varphi^{-1}}{C_S^{-1}}{C_P^{-2}}>0.$$
By the decay of $\Phi'(\mu)$, we can find constants $\tilde{C}_\Phi$ and $r_4$ such that
$$\abs{\Phi'(\mu)}\leq\dfrac{\tilde{C}_\Phi}{\abs{\mu}^{1+2s}}\quad\text{for}\quad\abs{\mu}\geq{r_4}.$$
Hence
$$\abs{\mu\Phi'(\mu)}\leq\dfrac{\tilde{C}_\Phi}{\abs{\mu}^{2s}}\quad\text{for}\quad\abs{\mu}\geq{r_4}.$$

Choose
\begin{equation}\label{param eps}
\eps<\min\set{\dfrac12,\dfrac{\delta_*}{c-k},\dfrac{\Phi_*}{3\kappa_2}}
\end{equation}
and then choose
\begin{equation}\label{param alpha}
\alpha<\min\set{\dfrac12,\left(\dfrac{\eps\kappa_1}{2C_R}\right)^{\frac1s},\left(\dfrac{\Phi_*}{3C_R}\right)^{\frac1s},\dfrac{k\omega}{2r_4},\dfrac{k\omega}{2}\left(\dfrac{k\eps}{2\tilde{C}_\Phi}\right)^{\frac{1}{2s}}}.
\end{equation}
We consider two cases.

Case 1: $1-\abs{\Phi(\hat\mu)}<\delta_*$. Then for $0\leq{t}\leq1$, $\abs{t\eps{S}}\leq\eps(c-k)\leq\delta_*$ and so $1-(\Phi(\hat\mu)+t\eps{S})<2\delta_*$. Since $-\Phi'(\hat\mu)>0$, we have
$${\mathcal{L}}[V]\geq{S}(\alpha{x},\alpha{y})(\eps\kappa_1-C_R\alpha^s)\geq\dfrac{\eps\kappa_1}{2}S(\alpha{x},\alpha{y})>0.$$

Case 2: $-1+\delta_*\leq\Phi(\hat\mu)\leq1-\delta_*$. Then
$${\mathcal{L}}[V]\geq{S}(\alpha{x},\alpha{y})\left(\Phi_*-\eps\kappa_2-C_R\alpha^s\right)\geq\dfrac{\Phi_*}{3}S(\alpha{x},\alpha{y})>0.$$

Therefore, ${\mathcal{L}}[V]>0$ on $\R^3$, that is, $V$ is a super-solution. Next we prove that $v<V$ on $\R^3$, that is, for any $j$ and any $(x,y)\in\overline{\Omega_j}$,
$$\Phi\left(\hat\mu\right)+\eps{S}(\alpha{x},\alpha{y})>\Phi\left(\dfrac{k}{c}(z-a_{j}x-b_{j}y)\right).$$
To simplify the notation, we drop the subscript $j$ in $a_j$ and $b_j$. We consider two cases.

Case 1: $\hat\mu\leq\dfrac{k}{c}(z-ax-by)$. Then it is clear that
$$V(x,y,z)>\Phi(\hat\mu)\geq\left(\dfrac{k}{c}(z-ax-by)\right).$$

Case 2: $\hat\mu=\dfrac{z-\alpha^{-1}\varphi(\alpha{x},\alpha{y})}{\sqrt{1+\abs{\nabla\varphi(\alpha{x},\alpha{y})}^2}}>\dfrac{k}{c}(z-ax-by)$. We insert a term $ax+by$ in the left hand side and rearrange the inequality as follows.
$$\dfrac{z-ax-by-\alpha^{-1}(\varphi(\alpha{x},\alpha{y})-a\alpha{x}-b\alpha{y})}{\sqrt{1+\abs{\nabla\varphi(\alpha{x},\alpha{y})}^2}}>\dfrac{k}{c}(z-ax-by),$$
$$\left(\dfrac{c}{\sqrt{1+\abs{\nabla\varphi(\alpha{x},\alpha{y})}^2}}-k\right)(z-ax-by)>\dfrac{c(\varphi(\alpha{x},\alpha{y})-a\alpha{x}-b\alpha{y})}{\alpha\sqrt{1+\abs{\nabla\varphi(\alpha{x},\alpha{y})}^2}},$$
that is,
$$S(\alpha{x},\alpha{y})(z-ax-by)>\dfrac{c(\varphi-h)(\alpha{x},\alpha{y})}{\alpha\sqrt{1+\abs{\nabla\varphi(\alpha{x},\alpha{y})}^2}}.$$
By the definition of $\omega$, we have
$$z-ax-by>\dfrac{c\omega}{\alpha\sqrt{1+\abs{\nabla\varphi(\alpha{x},\alpha{y})}^2}}\geq\dfrac{c\omega}{\alpha}.$$
On the other hand, since $\varphi>h$, we have
$$\hat\mu>\dfrac{z-ax-by}{\sqrt{1+\abs{\nabla\varphi(\alpha{x},\alpha{y})}^2}}.$$
Now
\begin{equation*}\begin{split}
&\quad\,\,V(x,y,z)-\Phi\left(\dfrac{k}{c}(z-ax-by)\right)\\
&\geq\Phi\left(\dfrac{z-ax-by}{\sqrt{1+\abs{\nabla\varphi(\alpha{x},\alpha{y})}^2}}\right)-\Phi\left(\dfrac{k}{c}(z-ax-by)\right)+\eps{S}(\alpha{x},\alpha{y})\\
&=\dfrac{(z-ax-by)S(\alpha{x},\alpha{y})}{c}\\
&\qquad\cdot\int_0^1\!\Phi'\left(\left(\dfrac{t}{\sqrt{1+\abs{\nabla\varphi(\alpha{x},\alpha{y})}^2}}+\dfrac{k}{c}(1-t)\right)(z-ax-by)\right)\,dt\\
&\qquad+\eps{S}(\alpha{x},\alpha{y})\\
\end{split}\end{equation*}
Let us write, for $t\in[0,1]$,
\begin{equation*}\begin{split}
\mu_*=\mu_*(t)
&=\left(\dfrac{t}{\sqrt{1+\abs{\nabla\varphi(\alpha{x},\alpha{y})}^2}}+\dfrac{k}{c}(1-t)\right)(z-ax-by)\\
&=\dfrac{k+tS(\alpha{x},\alpha{y})}{c}(z-ax-by).
\end{split}\end{equation*}
We know from Lemma \ref{S} that $S>0$, thus $\dfrac{z-ax-by}{c}\leq\dfrac{\mu_*}{k}$ and $\mu_*\geq\dfrac{k\omega}{\alpha}$. Now
\begin{equation*}\begin{split}
&\quad\,\,V(x,y,z)-\Phi\left(\dfrac{k}{c}(z-ax-by)\right)\\
&\geq{S}(\alpha{x},\alpha{y})\left(\eps-\dfrac1k\int_0^1\!\mu_*(t)\Phi'(\mu_*(t))\,dt\right)\\
&\geq{S}(\alpha{x},\alpha{y})\left(\eps-\dfrac{1}{k}\sup_{\abs{\mu}\geq\frac{k\omega}{\alpha}}\abs{\mu\Phi'(\mu)}\right)\\
&\geq\dfrac{\eps}{2}S(\alpha{x},\alpha{y})\\
&>0
\end{split}\end{equation*}
This concludes the proof.
\end{proof}

\medskip
\section{Proof of the main result}

Since we have constructed a sub-solution and a super-solution, it suffices to carry out the monotone iteration argument. See \cite{sattinger}, \cite{petrosyan-pop}.


\begin{proof}[Proof of Theorem \ref{main}]
Write $L(u)=\sLap{u}-cu_z+Ku$ where $K>\norm[{L^\infty([-1,1])}]{f'}$ is so large that $\abs{\xi}^{2s}-c\xi_3+K>0$ for any $\xi\in\R^3$.
Using Lemma \ref{solve}, we construct a sequence $\set{w_m}\subset{C}^{2s+\beta}(\R^3)\cap{L}^\infty(\R^3)$ for some $\beta\in(0,1)$ as follows.
Let $w_0=v$. For any $m\geq1$, let $w_m$ be the unique solution of the linear equation
\begin{equation*}
L(w_m)=f(w_{m-1})+Kw_{m-1}.
\end{equation*}

We will prove by induction that
$$-1<v=w_0<w_1<\cdots<w_{m-1}<w_{m}<\cdots<V<1$$
for all $m\geq1$, using the strong maximum principle stated in Lemma \ref{MP}.
If we define $g(u)=f(u)+Ku$, then $g'(u)>0$, showing that $f(u_1)+Ku_1-f(u_2)-Ku_2>0$ if $u_1>{u_2}$.

For $m=1$, we have for each $1\leq{j}\leq{N}$, $L(v_j)=\sLap{v_j}-c(v_j)_z-f(v_j)+f(v_j)+Kv_j=f(v_j)+Kv_j$ and so
$$L(w_1-v_j)=f(v)+Kv-f(v_j)-Kv_j\geq0$$
since $v=\max_{j}v_j\geq{v_j}$ and
$$L(V-w_1)\geq{f}(V)+KV-f(v)-Kv>0.$$
Hence, $v<w_1<V$ unless $w_1=v$, giving a non-smooth solution ${\mathcal{L}}[w_1]=0$, which is impossible by the regularity of $\mathcal{L}$.
By the Schauder estimate \eqref{schauder}, since $v\in{C}^{0,1}(\R^3)\subset{C}^{0,\frac32-s}(\R^3)$, $w_1\in{C}^{2,s-\frac12}(\R^3)$.
In particular, $\sLap{w_1}$ is well-defined by the singular integral.

For any $m\geq2$, if $w_{m-2}<w_{m-1}<V$, then
$$L(w_m-w_{m-1})=f(w_{m-1})+Kw_{m-1}-f(w_{m-2})-Kw_{m-2}\geq0$$
and
$$L(V-w_m)\geq{f}(V)+KV-f(w_{m-1})-Kw_{m-1}>0.$$
Hence, $w_{m-1}<v_m<V$ unless $w_{m-1}=w_m$, in which case ${\mathcal{L}}[w_m]=0$.
Using the Schauder estimate \eqref{schauder} and an interpolation inequality, we have $w_m\in{C}^{2,s-\frac12}(\R^3)$ and
\begin{equation*}\begin{split}
\norm[C^{2,s-\frac12}(\R^3)]{w_m}
&\leq\norm[C^{0,\frac32-s}(\R^3)]{f(w_{m-1})}+K\norm[C^{0,\frac32-s}(\R^3)]{w_{m-1}}\\
&\leq\norm[L^\infty({[-1,1]})]{f}+2K\norm[C^{0,\frac32-s}(\R^3)]{w_{m-1}}\\
&\leq\tilde{C}_8\left(s,K,\norm[L^\infty({[-1,1]})]{f}\right)+\dfrac12\norm[C^{2,s-\frac12}(\R^3)]{w_{m-1}}
\end{split}\end{equation*}
Iterating this yields $\norm[C^{2,s-\frac12}(\R^3)]{w_m}\leq2\tilde{C}_8+\dfrac{1}{2^{m-2}}\norm[C^{2,s-\frac12}(\R^3)]{w_1}$.

Therefore, the sequence $\set{w_m}_{m\geq1}$ is monotone increasing and uniformly bounded in $C^{2,s-\frac12}(\R^3)$.
Hence the pointwise limit
$$u(x)=\lim_{m\to\infty}w_m(x)$$
exists and is unique.
By Arzel\`a-Ascoli theorem, we can extract from each subsequence of $\set{w_m}$ a subsequence converging in $C^{2}(\R^3)$.
Since $u$ is unique, we conclude that
$$w_m\to{u}\qquad\text{in}~C^2(\R^3)$$
Clearly, $\mathcal{L}[u]=0$ in $\R^3$ and the proof is now complete.
\end{proof}

\medskip

\appendix

\section{Properties of the fractional Laplacian}
\label{sect frac lap}
Here we list some elementary properties of the fractional Laplacian mentioned in \cite{silvestre1}, \cite{cabre-sire1}, \cite{palatucci-savin-valdinoci}, \cite{gui-zhao}.

\subsection{Basic properties}

It is convenient to have the following

\begin{lem}\label{C/C}
For any $a\in\R$, we have
$$\int_{\infty}^\infty\!\dfrac{dx}{(x^2+a^2)^{1+s}}=\dfrac{\sqrt{\pi}\Gamma(\frac12+s)}{\Gamma(1+s)}\dfrac{1}{\abs{a}^{1+2s}}=\dfrac{C_{1,s}}{C_{2,s}}\dfrac{1}{\abs{a}^{1+2s}}.$$
As a consequence,
$$\int_{\infty}^\infty\!\dfrac{dx}{(x^2+a^2)^{\frac{n}{2}+s}}=\dfrac{\sqrt{\pi}\Gamma(\frac{n}{2}+s)}{\Gamma(\frac{n+1}{2}+s)}\dfrac{1}{\abs{a}^{n+2s}}=\dfrac{C_{n,s}}{C_{n+1,s}}\dfrac{1}{\abs{a}^{n+2s}}$$
for any integer $n\geq1$.
\end{lem}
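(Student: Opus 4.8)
The lemma is a direct computation, and the plan is to reduce both identities to the standard Beta integral $\int_{-\infty}^{\infty}(1+t^{2})^{-\beta}\,dt$. For the first identity, the substitution $x=\abs{a}t$ gives
\[
\int_{-\infty}^{\infty}\frac{dx}{(x^{2}+a^{2})^{1+s}}=\frac{1}{\abs{a}^{1+2s}}\int_{-\infty}^{\infty}\frac{dt}{(1+t^{2})^{1+s}},
\]
so it remains to evaluate $\int_{-\infty}^{\infty}(1+t^{2})^{-(1+s)}\,dt$. First I would use evenness and then the substitution $t=\tan\theta$ (equivalently $u=t^{2}/(1+t^{2})$), turning the integral into $2\int_{0}^{\pi/2}\cos^{2s}\theta\,d\theta=\Beta\!\left(\tfrac12,\tfrac12+s\right)=\dfrac{\Gamma(\tfrac12)\,\Gamma(\tfrac12+s)}{\Gamma(1+s)}$; since $\Gamma(\tfrac12)=\sqrt\pi$, this is precisely the asserted value $\dfrac{\sqrt\pi\,\Gamma(\tfrac12+s)}{\Gamma(1+s)}$.

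To identify this with $C_{1,s}/C_{2,s}$, I would insert the explicit constant $C_{n,s}=2^{2s}s\,\Gamma(\tfrac n2+s)\big/\big(\Gamma(1-s)\,\pi^{n/2}\big)$ recorded in the introduction: the factors $2^{2s}s/\Gamma(1-s)$ cancel between numerator and denominator, leaving $C_{1,s}/C_{2,s}=\big(\Gamma(\tfrac12+s)/\sqrt\pi\big)\big/\big(\Gamma(1+s)/\pi\big)=\sqrt\pi\,\Gamma(\tfrac12+s)/\Gamma(1+s)$, which is the number just computed.

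The general-$n$ statement is obtained by running the same two steps with the exponent $\tfrac{n+1}{2}+s$ in place of $1+s$ (this is the exponent for which the rescaling $x=\abs{a}t$ produces exactly $\abs{a}^{-(n+2s)}$): one gets $\abs{a}^{-(n+2s)}\int_{-\infty}^{\infty}(1+t^{2})^{-(\frac{n+1}{2}+s)}\,dt$, and the same cosine/Beta evaluation gives $\Beta\!\left(\tfrac12,\tfrac n2+s\right)=\dfrac{\sqrt\pi\,\Gamma(\tfrac n2+s)}{\Gamma(\tfrac{n+1}{2}+s)}$; comparing with the formulas for $C_{n,s}$ and $C_{n+1,s}$ exactly as above yields $C_{n,s}/C_{n+1,s}$. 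There is no genuine obstacle here — the lemma is pure bookkeeping — so the only things to be careful about are the half-integer shifts in the arguments of $\Gamma$ and keeping track of which power of $\abs{a}$ the rescaling produces.
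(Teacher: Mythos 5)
Your proof is correct and follows essentially the same route as the paper's: rescale by $\abs{a}$ and evaluate the resulting one-variable integral as the Beta function $\Beta\left(\tfrac12,\tfrac12+s\right)=\sqrt{\pi}\,\Gamma(\tfrac12+s)/\Gamma(1+s)$ (the paper reaches the Beta integral via the substitution $y=1/(x^2+1)$ where you use $t=\tan\theta$, an immaterial difference), then cancel the common factors in $C_{n,s}/C_{n+1,s}$. You also correctly read the exponent in the second identity as $\tfrac{n+1}{2}+s$, which is what the stated power $\abs{a}^{-(n+2s)}$ and the Gamma ratio actually require.
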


\begin{proof}
Without loss of generality, we can assume that $a=1$ by a simple scaling.

Using the substitution $y=1/(x^2+1)$, we have $x=\sqrt{1/y-1}$ and $dx=-(1/2)y^{-3/2}(1-y)^{-1/2}\,dy$, so
\begin{equation*}\begin{split}
\int_{\infty}^\infty\!\dfrac{dx}{(x^2+1)^{1+s}}
&=2\int_{0}^\infty\!\dfrac{dx}{(x^2+1)^{1+s}}\\
&=\int_0^1\!y^{s-\frac12}(1-y)^{-\frac12}\,dy\\
&=\Beta\left(\dfrac12,\dfrac12+s\right)\\
&=\dfrac{\sqrt{\pi}\Gamma\left(\frac12+s\right)}{\Gamma(1+s)},
\end{split}\end{equation*}
where $\Beta$ denotes the Beta function.

The second equality is obtained by replacing $s$ by $s+n/2$.
\end{proof}

\begin{cor}\label{1var}
If $u(x)=u(x_1,\dots,x_n)$ only depends on $x_1$, then
$$\sLap{u}(x_1)=\sLapx{u}(x_1).$$
\end{cor}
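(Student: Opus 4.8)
The plan is to reduce the $n$-dimensional singular integral defining $\sLap u(x)$ to the one-dimensional one by integrating out the $n-1$ directions in which $u$ is constant. Concretely, since $u(x)=u(x_1)$, I would write
$$\sLap{u}(x_1)=C_{n,s}\int_{\R^n}\!\dfrac{2u(x_1)-u(x_1+\xi_1)-u(x_1-\xi_1)}{2\abs{\xi}^{n+2s}}\,d\xi,$$
using the symmetric form of the definition so that no principal value is needed. The integrand depends on $\xi_2,\dots,\xi_n$ only through $\abs{\xi}$, so I would apply Fubini and integrate over $(\xi_2,\dots,\xi_n)\in\R^{n-1}$ first, for each fixed $\xi_1$.

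The key computation is the slice integral $\int_{\R^{n-1}}\abs{\xi}^{-n-2s}\,d(\xi_2,\dots,\xi_n)$ with $\abs{\xi}^2=\xi_1^2+\xi_2^2+\cdots+\xi_n^2$. I would evaluate this by an iterated application of Lemma \ref{C/C}: each integration in one extra variable $\xi_j$ raises the ``base dimension'' by one, producing a factor $C_{m,s}/C_{m+1,s}$ and leaving a kernel of the form $(\,\cdot\,)^{-\frac{m}{2}-s}$. Starting from $\abs{\xi}^{-n-2s}=(\xi_1^2+\cdots+\xi_n^2)^{-\frac{n}{2}-s}$ and integrating out $n-1$ of the variables, the telescoping product of constants collapses to $C_{n,s}^{-1}C_{1,s}$, and we are left with $C_{1,s}/C_{n,s}\cdot\abs{\xi_1}^{-1-2s}$. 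Substituting back,
$$\sLap{u}(x_1)=C_{n,s}\cdot\dfrac{C_{1,s}}{C_{n,s}}\int_{\R}\!\dfrac{2u(x_1)-u(x_1+\xi_1)-u(x_1-\xi_1)}{2\abs{\xi_1}^{1+2s}}\,d\xi_1=\sLapx{u}(x_1),$$
which is exactly the claim.

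There is no real obstacle here; the only point requiring a little care is making the iterated use of Lemma \ref{C/C} rigorous, i.e.\ checking that each intermediate integral converges absolutely so that Fubini applies. Absolute convergence of the slice integral for $\xi_1\neq0$ follows because $(\xi_1^2+|\xi'|^2)^{-\frac{m}{2}-s}$ decays like $|\xi'|^{-m-2s}$ at infinity with $m\geq1$ and is integrable near $\xi'=0$ thanks to the strictly positive constant $\xi_1^2$; and the remaining one-dimensional integral converges by the regularity of $u$ near $x_1$ (the numerator is $O(\xi_1^2)$) together with the growth condition $u\in C_s^2$. Hence Fubini is justified and the computation above is valid, completing the proof.
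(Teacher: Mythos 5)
Your proposal is correct and follows essentially the same route as the paper: both integrate out the $n-1$ redundant variables via iterated application of Lemma \ref{C/C}, with the constants telescoping from $C_{n,s}$ down to $C_{1,s}$. The only cosmetic difference is that you use the symmetric second-difference form of the kernel to avoid the principal value, while the paper keeps the $\PV$ form throughout.
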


\begin{proof}
Clearly, by induction we have
\begin{equation*}\begin{split}
\sLap{u}(x_1)
&=C_{n,s}\PV\int_{\R^n}\!\dfrac{u(x_1)-u(x_1+\xi_1)}{(\xi_1^2+\cdots+\xi_n^2)^{\frac{n}{2}+s}}\,d{\xi_n}\cdots{d}{\xi_1}\\
&=C_{n-1,s}\PV\int_{\R^{n-1}}\!\dfrac{u(x_1)-u(x_1+\xi_1)}{(\xi_1^2+\cdots+\xi_{n-1}^2)^{\frac{n-1}{2}+s}}\,d{\xi_{n-1}}\cdots{d}{\xi_1}\\
&\,\,\,\vdots\\
&=C_{1,s}\PV\int_{\R}\!\dfrac{u(x_1)-u(x_1+\xi_1)}{\abs{\xi_1}^{1+2s}}\,d{\xi_1}\\
&=\sLapx{u}(x_1).
\end{split}\end{equation*}
\end{proof}

\begin{lem}[Homogeneity]\label{homo}
For any admissible $u:\R^n\to\R$, $x\in\R^n$ and $a\in\R$,
$$\sLap(u(ax))=\abs{a}^{2s}\sLap{u}(ax).$$
In particular, if $u$ is an even function then so is $\sLap{u}$.
\end{lem}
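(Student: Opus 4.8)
The plan is to reduce the identity to a single linear change of variables in the singular integral. Write $v(x) = u(ax)$. If $a = 0$ the claim is trivial, since then $v \equiv u(0)$ is constant, so $\sLap v = 0 = \abs{a}^{2s}\sLap u(0)$; hence I would assume $a \neq 0$ from now on. Using the first integral representation listed in the introduction,
$$\sLap v(x) = C_{n,s}\PV\int_{\R^n}\!\frac{v(x) - v(x+\xi)}{\abs{\xi}^{n+2s}}\,d\xi = C_{n,s}\PV\int_{\R^n}\!\frac{u(ax) - u(ax + a\xi)}{\abs{\xi}^{n+2s}}\,d\xi.$$

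Next I would substitute $\zeta = a\xi$. This is a bijection of $\R^n$ with $d\xi = \abs{a}^{-n}\,d\zeta$ and $\abs{\xi}^{n+2s} = \abs{a}^{-(n+2s)}\abs{\zeta}^{n+2s}$, so that $\dfrac{d\xi}{\abs{\xi}^{n+2s}} = \abs{a}^{2s}\dfrac{d\zeta}{\abs{\zeta}^{n+2s}}$. Moreover $\zeta \mapsto a\zeta$ is a dilation (possibly composed with a reflection when $a<0$) and therefore maps balls centered at the origin to balls centered at the origin, so the deleted neighborhoods defining the principal value are preserved and the P.V.\ limit passes through the substitution unchanged. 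Collecting the factor $\abs{a}^{2s}$ yields
$$\sLap v(x) = \abs{a}^{2s}\, C_{n,s}\PV\int_{\R^n}\!\frac{u(ax) - u(ax + \zeta)}{\abs{\zeta}^{n+2s}}\,d\zeta = \abs{a}^{2s}\,\sLap u(ax),$$
which is the assertion. Along the way one should note that $v$ is again admissible, i.e.\ $v \in C_s^2(\R^n)$, which follows at once from the same change of variables applied to the defining integral of $C_s^2(\R^n)$.

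For the final sentence I would simply specialize to $a = -1$: the identity gives $\sLap(u(-x)) = \sLap u(-x)$ for every $x$. If $u$ is even, the left-hand side equals $\sLap(u(x)) = (\sLap u)(x)$, whence $(\sLap u)(x) = (\sLap u)(-x)$, i.e.\ $\sLap u$ is even. I do not anticipate any real obstacle in this lemma; the only point that deserves a word of care is the behaviour of the principal value under the substitution, and that is harmless precisely because the map $\zeta \mapsto a\zeta$ respects spherical shells about the origin.
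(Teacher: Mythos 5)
Your proof is correct and follows essentially the same route as the paper: the single change of variables $\zeta=a\xi$ in the singular-integral definition, with the Jacobian and kernel scaling combining to give the factor $\abs{a}^{2s}$. The extra remarks (the trivial case $a=0$, the preservation of the principal value under dilation, and the explicit $a=-1$ specialization for evenness) are fine and, if anything, slightly more careful than the paper's one-line computation.
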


\begin{proof}
This is trivial and follows from a change of variable $\eta=a\xi$. Indeed,
\begin{equation*}\begin{split}
\sLap(u(ax))
&=C_{n,s}\PV\int_{\R^n}\!\dfrac{u(ax)-u(ax+a\xi)}{\abs{\xi}^{n+2s}}\,d\xi\\
&=C_{n,s}\abs{a}^{2s}\PV\int_{\R^n}\!\dfrac{u(ax)-u(ax+a\xi)}{\abs{a\xi}^{n+2s}}\abs{a}^{n}\,d\xi\\
&=C_{n,s}\abs{a}^{2s}\PV\int_{\R^n}\!\dfrac{u(ax)-u(ax+\eta)}{\abs{\eta}^{n+2s}}\,d\eta\\
&=\abs{a}^{2s}\sLap{u}(ax).
\end{split}\end{equation*}
\end{proof}

\begin{lem}[Commuting with rigid motions]\label{rigid}
Suppose $M:\R^n\to\R^n$ is a rigid motion, that is, for $x\in\R^n$, $Mx=Ax+b$ where $A\in{O}(n)$ and $b\in\R^n$. Then
$$\sLap(u(Mx))=(\sLap{u})(Mx).$$
\end{lem}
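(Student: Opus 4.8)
The plan is to work directly from the singular-integral definition of the fractional Laplacian and carry out the linear change of variables induced by the orthogonal part $A$ of the rigid motion. The translation $b$ will play no role because it enters the operator only through differences.

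First I would write, using the symmetric (second-difference) form of the operator to avoid any principal-value bookkeeping,
$$\sLap(u(M\cdot))(x)=\frac{C_{n,s}}{2}\int_{\R^n}\frac{2u(Mx)-u(M(x+\xi))-u(M(x-\xi))}{\abs{\xi}^{n+2s}}\,d\xi.$$
Since $M(x\pm\xi)=Ax+b\pm A\xi=Mx\pm A\xi$, the numerator equals $2u(Mx)-u(Mx+A\xi)-u(Mx-A\xi)$, so indeed $b$ disappears. I would then substitute $\eta=A\xi$: because $A\in O(n)$ we have $\abs{\eta}=\abs{A\xi}=\abs{\xi}$ and $\abs{\det A}=1$, hence $d\eta=d\xi$, and the integral becomes
$$\frac{C_{n,s}}{2}\int_{\R^n}\frac{2u(Mx)-u(Mx+\eta)-u(Mx-\eta)}{\abs{\eta}^{n+2s}}\,d\eta=(\sLap u)(Mx),$$
which is the asserted identity. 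The admissibility hypothesis on $u$ ensures both sides are finite, so the manipulation is justified; if one prefers the one-sided form, the same substitution works since $A$ maps each excised ball $B_\varepsilon(0)$ onto itself and so no principal-value contribution is lost.

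There is essentially no obstacle here. The only point deserving a word of emphasis is that the change of variables is \emph{orthogonal}, so it simultaneously preserves the kernel $\abs{\cdot}^{-n-2s}$ and Lebesgue measure; this is precisely the structural reason that $\sLap$, like the classical Laplacian, commutes with the isometries of $\R^n$.
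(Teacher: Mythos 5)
Your proof is correct and is essentially the paper's own argument: both reduce the claim to the change of variables $\xi\mapsto A\xi$, using $\abs{A\xi}=\abs{\xi}$ and $\abs{\det A}=1$ so that the kernel and Lebesgue measure are preserved, with the translation $b$ dropping out of the differences. Your use of the symmetric second-difference form (and the remark that $A$ preserves the excised balls) is a minor cosmetic variation, not a different route.
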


\begin{proof}
It is equivalent to showing that
\begin{equation*}\begin{split}
&C_{n,s}\PV\int_{\R^n}\!\dfrac{u(Ax+b)-u(Ax+b+A\xi)}{\abs{\xi}^{n+2s}}\,d{\xi}\\
&\qquad=C_{n,s}\PV\int_{\R^n}\!\dfrac{u(Ax+b)-u(Ax+b+\xi)}{\abs{\xi}^{n+2s}}\,d{\xi}.
\end{split}\end{equation*}
But since $\abs{A\xi}=\abs{\xi}$ and $\abs{\det{A}}=1$, the result follows from a change of variable $\xi\mapsto{A}\xi$.
\end{proof}

Combining the above simple results, we obtain a useful chain rule.

\begin{lem}[Chain rule for linear transformation]\label{chain}
For any admissible $u:\R^n\to\R$, and $x,a\in\R^n$, there holds
$$\sLap(u(a\cdot{x}))=\abs{a}^s\sLapx{u}(a\cdot{x}).$$
\end{lem}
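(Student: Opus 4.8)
The plan is to deduce this from the three preceding results by rotating coordinates so that the $n$-dimensional problem collapses to a one-dimensional one. Throughout I regard $u$ as a function of one real variable, so that ``$u(a\cdot x)$'' means the composition of $u$ with the linear functional $x\mapsto a\cdot x$ on $\R^n$, consistent with the convention of Corollary \ref{1var}; here, for $a=(a_1,\dots,a_n)$, the symbol $\abs{a}$ stands for $\sum_i a_i^2$, the square of the Euclidean length of $a$. First I would dispose of the degenerate case $a=0$: then $u(a\cdot x)\equiv u(0)$ is constant, so $\sLap(u(a\cdot x))\equiv0$, and the right-hand side vanishes as well since $\abs{0}^s=0$. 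From now on $a\neq0$; set $\ell=\left(\sum_i a_i^2\right)^{1/2}=\abs{a}^{1/2}>0$.

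Next I would pick an orthogonal matrix $A\in O(n)$ whose first row is $a/\ell$; this is possible because $a/\ell$ is a unit vector (complete it to an orthonormal basis). Then $(Ax)_1=(a/\ell)\cdot x$, i.e.\ $a\cdot x=\ell\,(Ax)_1$ for all $x\in\R^n$, so $u(a\cdot x)=\widetilde u(Ax)$ where $\widetilde u(y)=u(\ell\,y_1)$ depends on $y\in\R^n$ only through its first coordinate. Since $Mx=Ax$ is a rigid motion, Lemma \ref{rigid} gives $\sLap\big(u(a\cdot x)\big)=\big(\sLap\widetilde u\big)(Ax)$. Because $\widetilde u$ depends only on the first variable, Corollary \ref{1var} gives $\big(\sLap\widetilde u\big)(y)=\big(\sLapx\widetilde u\big)(y_1)$, where on the right $\widetilde u$ is the one-variable map $t\mapsto u(\ell t)$; and one-dimensional homogeneity (Lemma \ref{homo} with $n=1$) gives $\big(\sLapx\widetilde u\big)(t)=\ell^{2s}\,(\sLapx u)(\ell t)$. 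Chaining these identities and using $\ell\,(Ax)_1=a\cdot x$ yields
\[
\sLap\big(u(a\cdot x)\big)=\big(\sLap\widetilde u\big)(Ax)=\ell^{2s}\,(\sLapx u)\big(\ell\,(Ax)_1\big)=\ell^{2s}\,(\sLapx u)(a\cdot x)=\abs{a}^{s}\,(\sLapx u)(a\cdot x),
\]
which is the assertion.

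I do not anticipate any real obstacle, since every ingredient is already in hand; the only thing to watch is the bookkeeping of the normalization — the one-dimensional homogeneity produces the factor $\ell^{2s}=\left(\sum_i a_i^2\right)^{s}=\abs{a}^{s}$, so the exponent $s$ (rather than $2s$) appears precisely because $\abs{a}$ denotes $\sum_i a_i^2$ here. As an independent check one may argue straight from the singular-integral definition: decomposing $\xi=t\,\tfrac{a}{\ell}+\xi^{\perp}$ with $\xi^{\perp}\perp a$, so that $a\cdot\xi=\ell t$ and $\abs{\xi}^2=t^2+\abs{\xi^{\perp}}^2$, and integrating out $\xi^{\perp}\in\R^{n-1}$ by the iterated identity of Lemma \ref{C/C} (exactly as in the proof of Corollary \ref{1var}) collapses the integral over $\R^n$ to one over $\R$; the substitution $\mu=\ell t$ then produces the same factor $\ell^{2s}$, confirming the computation.
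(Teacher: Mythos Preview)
Your proof is correct and follows essentially the same route as the paper's: reduce to a unit direction via homogeneity (Lemma \ref{homo}), rotate so that the direction becomes the first coordinate axis (Lemma \ref{rigid}), and then invoke Corollary \ref{1var}. The only cosmetic difference is that the paper normalizes first and then rotates, while you rotate first and apply one-dimensional homogeneity at the end; your explicit treatment of the degenerate case $a=0$ and your careful reading of $\abs{a}$ as $\sum_i a_i^2$ (without which the exponent $s$ would be off by a factor of two) are useful clarifications that the paper leaves implicit.
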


\begin{proof}
By Lemma \ref{homo}, we may assume that $\abs{a}=1$. By extending $v_1=a\in\R^n$ to an orthonormal basis $\set{v_1,\dots,v_n}$ in $\R^n$, we can construct an orthogonal matrix $A\in{O}(n)$ whose $i$-th row is the row vector $v_i$. In particular, $(Ax)_1=a\cdot{x}$. By Lemma \ref{rigid} and Corollary \ref{1var}, we have
\begin{equation*}\begin{split}
\sLap(u(a\cdot{x}))
&=\sLap(u((Ax)_1))\\
&=(\sLap{u})((Ax)_1)\\
&=(\sLapx{u})((Ax)_1)\\
&=\sLapx{u}(a\cdot{x}).
\end{split}\end{equation*}
\end{proof}

\subsection{Decay properties}

If a function $u$ decays together with its derivatives $\nabla{u}$ and $D^2{u}$ at infinity, $\sLap{u}$ gains a decay of order $2s$, but never better than $O\left(\abs{x}^{-2s}\right)$ because of its nonlocal nature. In a more subtle case when $D^2{u}(x)$ does not decay in $\abs{x}$, we can still get a decay of order $1$ from $\nabla{u}$. The precise statement is as follows.

\begin{lem}[Decay of $\sLap{u}$]\label{decay1}
Suppose $u\in L^{\infty}(\R^n)$ and $u$ is $C_s^{2}$ outside a set $E$ containing the origin. For any $x\in\R^n$ with $\dist(x,E)\gg{R}>r>0$, we have
\begin{equation*}\begin{split}
&\quad\,\,\abs{\sLap{u}(x)}\\
&\leq{C_{n,s}}\Bigg(\dfrac{r^{2-2s}}{4(1-s)}\norm[\dot{C}^2\left(B_{r}(x)\right)]{u}+\\
&\qquad\,\,+\dfrac{1}{2s-1}\left(\dfrac{1}{r^{2s-1}}-\dfrac{1}{R^{2s-1}}\right)\norm[\dot{C}^1\left(B_R\backslash{B_r}(x)\right)]{u}+\dfrac{1}{sR^{2s}}\norm[L^\infty(\R^n)]{u}\Bigg).
\end{split}\end{equation*}
where
$$\norm[\dot{C}^1(\Omega)]{u}=\sum_{j=1}^{n}\sup_{x\in{\Omega}}\abs{\dfrac{\p{u}}{\p{x_j}}(x)},\quad\text{and}\quad\norm[\dot{C}^2(\Omega)]{u}=\sum_{i,j=1}^n\sup_{x\in{\Omega}}\abs{\dfrac{\p^2{u}}{\p{x_i}\p{x_j}}(x)}.$$
\end{lem}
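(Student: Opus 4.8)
The plan is to estimate $\sLap u(x)$ directly from its singular-integral definition, splitting the increment variable $\xi$ into the three regions $\set{|\xi|<r}$, $\set{r\le|\xi|<R}$, and $\set{|\xi|\ge R}$ dictated by the three norms in the statement. I would work with the fourth of the equivalent representations, namely $\sLap u(x)=C_{n,s}\int_{\R^n}\bigl(u(x)-u(x+\xi)+\chi_D(\xi)\,\nabla u(x)\cdot\xi\bigr)|\xi|^{-n-2s}\,d\xi$ with $D=B_r(0)$, so that the odd first-order term is subtracted off precisely on the inner ball and no principal value is needed there. A preliminary observation does the setup work: since $\dist(x,E)\gg R$, the ball $B_R(x)$ is disjoint from $E$, so $u\in C^2(B_R(x))$, which legitimizes the Taylor expansions near $x$ used below.

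On $\set{|\xi|<r}$ the numerator is $u(x)-u(x+\xi)+\nabla u(x)\cdot\xi$, and a second-order Taylor expansion along the segment $[x,x+\xi]\subset B_r(x)$ bounds it by a multiple of $\norm[\dot{C}^2(B_r(x))]{u}\,|\xi|^2$; since $\int_{|\xi|<r}|\xi|^{2-n-2s}\,d\xi\propto\int_0^r\rho^{1-2s}\,d\rho=r^{2-2s}/(2-2s)$ (finite because $s<1$), this yields the first term. On $\set{r\le|\xi|<R}$ the numerator is $u(x)-u(x+\xi)$, bounded via the mean value theorem — with the connecting segment lying in $B_R(x)\subset\R^n\setminus E$ — by $\norm[\dot{C}^1(B_R\setminus B_r(x))]{u}\,|\xi|$; since $\int_{r\le|\xi|<R}|\xi|^{1-n-2s}\,d\xi\propto\int_r^R\rho^{-2s}\,d\rho=(r^{1-2s}-R^{1-2s})/(2s-1)$, this gives the second term. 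On $\set{|\xi|\ge R}$ the numerator $u(x)-u(x+\xi)$ is merely bounded by $2\norm[L^\infty(\R^n)]{u}$ (no regularity needed, as $u\in L^\infty$ globally), and $\int_{|\xi|\ge R}|\xi|^{-n-2s}\,d\xi\propto\int_R^\infty\rho^{-1-2s}\,d\rho=R^{-2s}/(2s)$, giving the last term. Summing the three estimates, and carrying through the elementary angular integrals that turn the radial bounds into the displayed coefficients, produces the claimed inequality.

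The argument is essentially routine; the steps that genuinely require attention are: (i) choosing the representation with the $\chi_D\nabla u(x)\cdot\xi$ correction ($D=B_r(0)$), so that the inner integral is absolutely convergent and no principal value must be manipulated; (ii) invoking $\dist(x,E)\gg R$ to guarantee that $B_R(x)$ — and hence every Taylor/mean-value segment used — lies in the region where $u$ is $C^2$, with a small additional care that the segments appearing in the middle region dip into $B_r(x)$ and so the gradient bound there must be read off jointly from the $\dot{C}^1$ control on $B_R\setminus B_r(x)$ and the $\dot{C}^2$ control on $B_r(x)$; and (iii) the bookkeeping of the dimensional (angular) constants. No lower bound on $s$ is used here — the restriction $s>1/2$ enters only in the later main estimates — and all three radial integrals converge precisely because $0<s<1$ together with $r,R>0$.
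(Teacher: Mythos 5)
Your proposal is correct and follows essentially the same route as the paper: the same three-region decomposition $B_r$, $B_R\setminus B_r$, $B_R^c$, the same representation with the $\chi_{B_r}\nabla u(x)\cdot\xi$ correction, and the same Taylor/mean-value/$L^\infty$ bounds producing the three displayed coefficients. Your remark about the middle-region segments dipping into $B_r(x)$ is a fair observation (the paper itself quietly passes to $\norm[\dot{C}^1(B_R(x))]{u}$ at that step), but it does not change the argument.
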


We think of $E$ as the set of all edges of the pyramid projected on $\R^2$. In particular, by taking $r=R=1$, $r=R=\dfrac{\abs{x}}{2}$ and $r=1$, $R=\dfrac{\abs{x}}{2}$ respectively, we have

\begin{cor}\label{decay2}
There exists a constant $C_\Delta=C_\Delta(n,s)$ such that
$$\abs{\sLap{u}(x)}\leq{C_\Delta}\left(\norm[\dot{C}^2\left(B_{1}(x)\right)]{u}+\norm[L^\infty(\R^n)]{u}\right),$$
$$\abs{\sLap{u}(x)}\leq{C_\Delta}\left(\norm[\dot{C}^2\left(B_{\frac{\abs{x}}{2}}(x)\right)]{u}\abs{x}^2+\norm[L^\infty(\R^n)]{u}\right)\abs{x}^{-2s}$$
and
$$\abs{\sLap{u}(x)}\leq{C_\Delta}\left(\norm[\dot{C}^2\left(B_1(x)\right)]{u}+\norm[\dot{C}^1\left(B_{\frac{\abs{x}}{2}}(x)\right)]{u}+\norm[L^\infty(\R^n)]{u}\abs{x}^{-2s}\right).$$
\end{cor}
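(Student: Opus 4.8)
The plan is to obtain all three bounds as immediate specializations of Lemma \ref{decay1}, since the corollary records nothing more than three convenient choices of the free radii $r$ and $R$ there (together with the distance hypothesis $\dist(x,E)\gg R$, which is simply inherited). Throughout one uses that $1/2<s<1$, so that $1-s$, $2s-1$ and $2-2s$ are all strictly positive; in particular every denominator occurring in Lemma \ref{decay1} is harmless, the middle term $\frac{1}{2s-1}\bigl(\frac{1}{r^{2s-1}}-\frac{1}{R^{2s-1}}\bigr)$ is nonnegative whenever $r\le R$, and it vanishes when $r=R$.

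For the first estimate I would take $r=R=1$; the $\dot{C}^1$-term then drops out, leaving
\[
\abs{\sLap{u}(x)}\le C_{n,s}\left(\frac{1}{4(1-s)}\norm[\dot{C}^2(B_1(x))]{u}+\frac1s\norm[L^\infty(\R^n)]{u}\right),
\]
which is the claim with any $C_\Delta\ge C_{n,s}\max\{(4(1-s))^{-1},s^{-1}\}$. For the second I would take $r=R=\abs{x}/2$; again the middle term disappears, and pulling $\abs{x}^{-2s}$ out of $\frac{(\abs{x}/2)^{2-2s}}{4(1-s)}\norm[\dot{C}^2(B_{\abs{x}/2}(x))]{u}+\frac{(\abs{x}/2)^{-2s}}{s}\norm[L^\infty(\R^n)]{u}$ and absorbing the powers of $2$ gives the stated inequality. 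For the third I would take $r=1$, $R=\abs{x}/2$ (legitimate once $\abs{x}>2$); here one estimates $\frac{1}{2s-1}\bigl(1-(\abs{x}/2)^{1-2s}\bigr)\le\frac{1}{2s-1}$, enlarges the domain of the $\dot{C}^1$-norm from $B_{\abs{x}/2}\setminus B_1(x)$ to $B_{\abs{x}/2}(x)$, and rewrites $\frac1s(\abs{x}/2)^{-2s}=\frac{2^{2s}}{s}\abs{x}^{-2s}$. One then takes $C_\Delta$ to be the largest of the three constants so produced.

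There is no real analytic obstacle: the entire content lies in Lemma \ref{decay1}, and what remains is bookkeeping of $(n,s)$-dependent constants. The few points requiring attention are the sign conditions forced by $s\in(1/2,1)$; the constraint $R>r$ in the third case, which restricts it to $\abs{x}>2$ (the complementary range $\abs{x}\le 2$ being subsumed by the first estimate, since then $\norm[L^\infty(\R^n)]{u}\le 2^{2s}\abs{x}^{-2s}\norm[L^\infty(\R^n)]{u}$); and the trivial but repeatedly used fact that enlarging the ball over which a seminorm is taken only increases the right-hand side.
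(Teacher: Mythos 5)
Your proposal is correct and is exactly the paper's argument: the corollary is stated immediately after Lemma \ref{decay1} as the specializations $r=R=1$, $r=R=\abs{x}/2$, and $r=1$, $R=\abs{x}/2$, with the remaining work being the same constant bookkeeping you carry out. Your extra care about the range $\abs{x}\le2$ in the third case is a small refinement the paper leaves implicit.
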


\begin{proof}[Proof of Lemma \ref{decay1}]
Integrating in $B_r(0)$, $B_R\backslash{B_r}(0)$ and $B_R(0)^c$, we have
$$\sLap{u}(x)=C_{n,s}(I_1+I_2+I_3)$$
where
\begin{equation*}\begin{split}
I_1&=\int_{B_{r}(0)}\!\dfrac{u(x)-u(x+\xi)-\nabla{u}(x)\cdot\xi}{\abs{\xi}^{n+2s}}\,d\xi\\
I_2&=\int_{B_{R}\backslash{B_r}(0)}\!\dfrac{u(x)-u(x+\xi)}{\abs{\xi}^{n+2s}}\,d\xi\\
I_3&=\int_{B_{R}^c}\!\dfrac{u(x)-u(x+\xi)}{\abs{\xi}^{n+2s}}\,d\xi
\end{split}\end{equation*}
Using second order Taylor expansion
$$u(x+\xi)=u(x)+\nabla{u}(x)\cdot\xi+\dfrac12\xi^{T}D^2{u}(\tilde{x})\xi$$
where $\tilde{x}\in B_{r}(x)$, we estimate
\begin{equation*}\begin{split}
\abs{I_1}
&\leq\int_{B_{r}(0)}\!\dfrac{\norm[\infty]{D^2{u}(\tilde{x})}\abs{\xi}^2}{2\abs{\xi}^{n+2s}}\,d\xi\\
&\leq\dfrac12\norm[\dot{C}^2({B_{r}(x)})]{u}\int_{B_{r}(0)}\!\dfrac{d\xi}{\abs{\xi}^{n+2s-2}}\\
&\leq\dfrac12\norm[\dot{C}^2({B_{r}(x)})]{u}\int_0^{r}\!\dfrac{d\rho}{\rho^{2s-1}}\\
&\leq\dfrac{r^{2-2s}}{4(1-s)}\norm[\dot{C}^2({B_{r}(x)})]{u}.
\end{split}\end{equation*}
For the second term, we have
\begin{equation*}\begin{split}
\abs{I_2}
&\leq\int_{B_R\backslash{B_r}(0)}\!\dfrac{1}{\abs{\xi}^{n+2s}}\int_0^1\!\abs{\nabla{u}(x+t\xi)\cdot\xi}\,d{t}d{\xi}\\
&\leq\norm[L^\infty(B_R\backslash{B_r}(x))]{\nabla{u}}\int_{B_R\backslash{B_r}(0)}\!\dfrac{d\xi}{\abs{\xi}^{n-1+2s}}\\
&\leq\norm[\dot{C}^1(B_R(x))]{u}\int_r^R\!\dfrac{d\rho}{\rho^{2s}}\\
&\leq\dfrac{1}{2s-1}\left(\dfrac{1}{r^{2s-1}}-\dfrac{1}{R^{2s-1}}\right)\norm[\dot{C}^1(B_R(x))]{u}
\end{split}\end{equation*}
The last term is simpler and is estimated as
\begin{equation*}\begin{split}
\abs{I_3}
&\leq2\norm[L^\infty(\R^n)]{u}\int_{B_{R}(0)^c}\!\dfrac{d\xi}{\abs{\xi}^{n+2s}}\\
&\leq2\norm[L^\infty(\R^n)]{u}\int_{R}^\infty\!\dfrac{d\rho}{\rho^{1+2s}}\\
&\leq\dfrac{1}{sR^{2s}}\norm[L^\infty(\R^n)]{u}.
\end{split}\end{equation*}
This completes the proof.
\end{proof}

\subsection{The linear operator}

Consider the linear operator $L$ acting on functions $u\in{C_s^{2}}(\R^n)$ by
$$L_0(u)=\sLap{u}+b_0\cdot\nabla{u}+c_0u,$$
where $b_0\in\R^n$ and $c_0\in\R$.

\begin{lem}[Strong maximum principle]\label{MP}
Suppose $u\in{C}_s^2(\R^n)$, $L_0(u)\geq0$, $u(x)\to0$ as $\abs{x}\to\infty$ and $c_0\geq0$.
Then either $u(x)>0$, $\forall x\in\R^n$, or $u(x)\equiv0$, $\forall x\in\R^n$.
\end{lem}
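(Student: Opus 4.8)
The plan is a direct two-step argument that reduces everything to the pointwise singular-integral formula for $\sLap$ evaluated at a global extremum; no deeper machinery will be needed (compare the strong maximum principles in \cite{silvestre1}, \cite{cabre-sire1}). First I would establish $u\geq0$ on $\R^n$; then, assuming $u\not\equiv0$, I would show that $u$ cannot touch zero.

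\emph{Step 1: $u\geq0$.} Arguing by contradiction, suppose $\inf_{\R^n}u<0$. Since $u\in C^2(\R^n)$ and $u(x)\to0$ as $\abs{x}\to\infty$, this infimum is attained at some $x_0\in\R^n$. At such an interior global minimizer one has $\nabla u(x_0)=0$, so in the representation
$$\sLap u(x_0)=C_{n,s}\int_{\R^n}\dfrac{u(x_0)-u(x_0+\xi)}{\abs{\xi}^{n+2s}}\,d\xi,$$
which is an honest Lebesgue integral --- convergent near $\xi=0$ because $u\in C^2$ there (the numerator being $O(\abs{\xi}^2)$) and convergent near infinity because $u\in C_s^2(\R^n)$ --- the integrand is pointwise $\leq0$, whence $\sLap u(x_0)\leq0$. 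Since $c_0\geq0$ and $u(x_0)<0$ we also have $c_0u(x_0)\leq0$, so $L_0(u)(x_0)\leq0$. Combined with $L_0(u)\geq0$ this forces $\sLap u(x_0)=0$, and a sign-definite integrand that integrates to zero must vanish a.e.; hence $u\equiv u(x_0)<0$, contradicting $u(x)\to0$. Therefore $u\geq0$.

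\emph{Step 2: $u>0$ unless $u\equiv0$.} Assume $u\not\equiv0$ and suppose toward a contradiction that $u(x_1)=0$ for some $x_1\in\R^n$. Then $x_1$ is a global minimizer of the nonnegative function $u$, so again $\nabla u(x_1)=0$, and
$$\sLap u(x_1)=-C_{n,s}\int_{\R^n}\dfrac{u(x_1+\xi)}{\abs{\xi}^{n+2s}}\,d\xi\leq0,\qquad c_0u(x_1)=0.$$
As in Step 1, $L_0(u)\geq0$ forces $\sLap u(x_1)=0$, so $u(x_1+\xi)=0$ for a.e.\ $\xi$, and by continuity $u\equiv0$ --- a contradiction. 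Hence $u>0$ on $\R^n$.

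The argument is essentially routine, and there is no serious obstacle; the two places that genuinely use the hypotheses are (i) that the extremum is attained, which is exactly where $u(x)\to0$ enters --- it is also what excludes a nonzero constant solution, since for $c_0=0$ a negative constant does satisfy $L_0\geq0$ --- and (ii) that $\sLap u$ is well-defined pointwise at the extremum, which follows from $u\in C_s^2(\R^n)$ together with $\nabla u=0$ there, turning the principal value into an ordinary convergent integral.
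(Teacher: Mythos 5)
Your proof is correct and follows essentially the same route as the paper: evaluate $L_0(u)$ at the attained global minimum, use $\nabla u=0$ there, the sign of the singular integral, and $c_0u\leq0$ to derive a contradiction. The only difference is cosmetic --- you split the argument into the cases $u(x_1)<0$ and $u(x_1)=0$ and deduce constancy from $\sLap u(x_1)=0$, whereas the paper asserts the strict inequality $\sLap u(x_1)<0$ directly for non-constant $u$.
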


\begin{proof}
Since $u(x)\to0$ as $\abs{x}\to\infty$, we know that $\inf_{\R^n}u$ is attained at some $x_1\in\R^n$. Suppose $u(x)\not\equiv0$. At the global minimum $x_1$, we have $\sLap{u}(x_1)<0$ and $\nabla{u}(x_1)=0$. If $u(x_1)\leq0$, then $c_0u(x_1)\leq0$ and $L_0[u](x_1)\leq\sLap{u}(x_1)<0$, a contradiction.
\end{proof}

\begin{lem}[Solvability of the linear equation]\label{solve}
Assume that $c_0$ is so large that the symbol $\abs{\xi}^{2s}-c\cdot\xi+M>0$ for any $\xi\in\R^n$. Let $\beta\in(0,1)$ be such that $2<2s+\beta<3$.

Then there exists a constant $C_{L_0}=C_{L_0}(n,s,\beta,\abs{b_0},c_0)$ such that for any $f_0\in{C}^\beta(\R^n)$,
there is a unique solution $u\in{C}^{2,2s+\beta-2}(\R^n)$ to the linear equation
$$L_0(u)=f_0,\quad\text{in}~\R^n,$$
satisfying the Schauder estimate
\begin{equation}\label{schauder}
\norm[C^{2,2s+\beta-2}(\R^n)]{u}\leq{C}_{L_0}\norm[C^\beta(\R^n)]{f_0}.
\end{equation}
\end{lem}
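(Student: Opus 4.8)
The plan is to reduce solvability to an a priori Schauder estimate, which then yields uniqueness at once, and to obtain existence by combining that estimate with an approximation argument (equivalently, with the method of continuity). As elsewhere in the paper, sub-criticality $s>1/2$ enters once more --- here in order to absorb the first order term $b_0\cdot\nabla u$.

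First I would establish the a priori estimate: every $u\in C^{2,2s+\beta-2}(\R^n)\cap L^\infty(\R^n)$ with $L_0(u)=f_0$ satisfies $\norm[C^{2,2s+\beta-2}(\R^n)]{u}\le C_{L_0}\norm[C^\beta(\R^n)]{f_0}$. This rests on two ingredients. (i) An $L^\infty$ bound: since $c_0>0$, at a point where $u$ nearly attains its supremum one has $\sLap u\ge0$ and $\nabla u=0$, hence $c_0u\le f_0$, and together with the analogous lower bound this gives $\norm[L^\infty(\R^n)]{u}\le c_0^{-1}\norm[L^\infty(\R^n)]{f_0}$; because $f_0$, and hence $u$, need not decay, Lemma \ref{MP} does not apply verbatim, so the comparison must be run with a slowly vanishing barrier added. (ii) The interior Schauder estimate for the fractional Laplacian (see \cite{silvestre1}; here one uses that $2s+\beta$ is not an integer), $\norm[C^{2s+\beta}(B_1(x_0))]{u}\le C\big(\norm[C^\beta(B_2(x_0))]{\sLap u}+\norm[L^\infty(\R^n)]{u}\big)$ for every $x_0\in\R^n$. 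Writing $\sLap u=f_0-b_0\cdot\nabla u-c_0u$ and invoking the interpolation inequality $\norm[C^\beta(\R^n)]{\nabla u}\le\delta\norm[C^{2s+\beta}(\R^n)]{u}+C_\delta\norm[L^\infty(\R^n)]{u}$, valid precisely because $1+\beta<2s+\beta$ (i.e. $s>1/2$), one absorbs $b_0\cdot\nabla u$ into the left-hand side, takes the supremum over $x_0$, and combines with (i).

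For existence it is enough to solve $L_0(u)=f_0$ on a dense subclass, say $f_0\in\mathcal{S}(\R^n)$: the symbol $m(\xi)=\abs{\xi}^{2s}+ib_0\cdot\xi+c_0$ satisfies $\abs{m(\xi)}\ge\abs{\xi}^{2s}+c_0>0$, so $m^{-1}$ is a bounded smooth symbol decaying at infinity and $u=\mathcal{F}^{-1}(m^{-1}\hat f_0)\in\mathcal{S}(\R^n)$ solves the equation; alternatively one runs the method of continuity from $\sLap+c_0$ (invertible, being a positive Bessel-type potential, i.e. convolution with a positive kernel of mass $c_0^{-1}$) to $L_0$ along $\sLap+tb_0\cdot\nabla+c_0$, $t\in[0,1]$, using that the a priori estimate is uniform in $t$. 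For general $f_0\in C^\beta(\R^n)$ one picks $f_k\to f_0$ locally with $\norm[C^\beta(\R^n)]{f_k}\le C\norm[C^\beta(\R^n)]{f_0}$, solves $L_0(u_k)=f_k$, and uses the uniform a priori bound together with the Arzel\`a-Ascoli theorem to extract a $C^2_{\loc}(\R^n)$ limit $u$ solving $L_0(u)=f_0$ and satisfying \eqref{schauder}. Uniqueness then follows because the difference of two solutions solves $L_0(u)=0$, and the a priori estimate forces $u\equiv0$.

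The main obstacle is the a priori estimate, in particular the $L^\infty$ bound for non-decaying data: the strong maximum principle of Lemma \ref{MP} as stated requires $u\to0$ at infinity, which fails here, so it must be replaced by the perturbative comparison sketched above; similarly, promoting the interior estimates to a genuinely global Schauder estimate, uniformly in $x_0$ and with constant depending only on $n$, $s$, $\beta$, $\abs{b_0}$, $c_0$, has to be done with some care.
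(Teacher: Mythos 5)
The paper offers no proof of this lemma, only the remark that it ``involves taking a Fourier transform and a density argument'' together with citations to \cite{krylov1} and \cite{petrosyan-pop}; your proposal implements exactly that route (Fourier inversion of the symbol for smooth data, a global Schauder a priori estimate absorbing the drift via $s>1/2$, then approximation and the method of continuity), and it is essentially correct. One minor imprecision: since $m^{-1}$ is not smooth at $\xi=0$ (as $2s$ is not an even integer), $\mathcal{F}^{-1}(m^{-1}\hat f_0)$ need not belong to $\mathcal{S}(\R^n)$, but it is bounded and $C^\infty$ with bounded derivatives of all orders, which is all your density argument actually needs.
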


The proof involves taking a Fourier transform and a density argument, see \cite{krylov1}, \cite{petrosyan-pop}.

\medskip
\section{Decay of the second order derivative of the profile}
\label{sect 1Ddecay}

\begin{prop}
Let $\Phi$ be as in Theorem \ref{1Dexist}. As $\abs{\mu}\to\infty$, we have
$$\Phi''(\mu)=O\left(\abs{\mu}^{-1-2s}\right).$$
\end{prop}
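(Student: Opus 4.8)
The plan is to convert the statement into a comparison estimate for the linear nonlocal equation solved by $\Phi''$. By standard elliptic regularity for the fractional Laplacian (iterating the Schauder estimate \eqref{schauder}, with an approximation argument to accommodate $f\in C^2$), $\Phi\in C^3(\R)$ with $\Phi',\Phi'',\Phi'''$ bounded. Differentiating \eqref{1D} once shows that $\Phi'$ solves $\sLapx\Phi'-k\Phi''-f'(\Phi)\Phi'=0$; differentiating again shows that $w:=\Phi''$ solves
\[
\mathcal{M}w:=\sLapx w-kw'-f'(\Phi)w=g,\qquad g:=f''(\Phi)(\Phi')^{2}.
\]
By Theorem \ref{1Dexist}, $|\Phi'(\mu)|\le C(1+|\mu|)^{-1-2s}$, so $|g(\mu)|\le C_g(1+|\mu|)^{-2-4s}$; and since $f'(\pm1)<0$ while $\Phi(\mu)\to\mp1$, there are $\mu_0>0$ and $\delta>0$ with $-f'(\Phi(\mu))\ge\delta$ for $|\mu|\ge\mu_0$. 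One also records $\Phi''(\mu)\to0$ as $|\mu|\to\infty$, e.g.\ from the interpolation bound $\|\Phi''\|_{L^\infty(B_1(\mu))}^{2}\le C\bigl(\|\Phi'\|_{L^\infty(B_2(\mu))}\|\Phi'''\|_{L^\infty(\R)}+\|\Phi'\|_{L^\infty(B_2(\mu))}^{2}\bigr)$, whose right-hand side tends to $0$.

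The core of the argument is the barrier. For $\sigma\ge1$, $A\ge1$ set $\bar w(\mu)=A(\sigma^{2}+\mu^{2})^{-(1+2s)/2}$, so $0<\bar w(\mu)\le A|\mu|^{-1-2s}$ and $\bar w(\mu)\asymp A(\sigma+|\mu|)^{-1-2s}$. Writing $\bar w(\mu)=A\sigma^{-1-2s}\psi(\mu/\sigma)$ with $\psi(t)=(1+t^{2})^{-(1+2s)/2}$ and using homogeneity (Lemma \ref{homo}), $\sLapx\bar w(\mu)=A\sigma^{-1-4s}(\sLapx\psi)(\mu/\sigma)$. A sharpening of Lemma \ref{decay1} — splitting into near-field and far-field and exploiting $\psi\in L^{1}(\R)$ to gain one extra power in the far field — gives the uniform bound $|\sLapx\psi(t)|\le C(1+|t|)^{-1-2s}\le C'\psi(t)$, hence $|\sLapx\bar w(\mu)|\le C'\sigma^{-2s}\bar w(\mu)$; similarly $|\bar w'(\mu)|\le C\sigma^{-1}\bar w(\mu)$. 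Therefore, on $\{|\mu|\ge\mu_0\}$,
\[
\mathcal{M}\bar w\ \ge\ \bigl(\delta-C'\sigma^{-2s}-kC\sigma^{-1}\bigr)\bar w\ \ge\ \tfrac{\delta}{2}\,\bar w,
\]
once $\sigma$ is fixed large; and since $\bar w\asymp A(\sigma+|\mu|)^{-1-2s}$ dominates $|g|\le C_g(1+|\mu|)^{-2-4s}$ after multiplication by a large $A$, we may then fix $A$ so large that also $\mathcal{M}\bar w\ge|g|$ on $\{|\mu|\ge\mu_0\}$ and $\bar w\ge\|\Phi''\|_{L^\infty([-\mu_0,\mu_0])}$ on $[-\mu_0,\mu_0]$.

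To conclude, the functions $\bar w\mp\Phi''$ satisfy $\mathcal{M}(\bar w\mp\Phi'')=\mathcal{M}\bar w\mp g\ge|g|\mp g\ge0$ on $\{|\mu|>\mu_0\}$, are $\ge0$ on $[-\mu_0,\mu_0]$, and tend to $0$ at $\pm\infty$; since the zeroth-order coefficient $-f'(\Phi)$ is $\ge\delta>0$ there, the maximum principle (the argument of Lemma \ref{MP}, localized to the exterior of $[-\mu_0,\mu_0]$, with $\Phi''\to0$ supplying the behaviour at infinity) forces $\bar w\mp\Phi''\ge0$, i.e.\ $|\Phi''(\mu)|\le\bar w(\mu)\le A|\mu|^{-1-2s}$, which is the claim.

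The step I expect to be delicate is precisely the barrier construction. For any positive, integrable, algebraically decaying $\chi$, the nonlocal tail forces $\sLapx\chi(\mu)$ to be of the \emph{same} order $|\mu|^{-1-2s}$ as $\chi$ — coming from the total mass of $\chi$ seen from far away — and with a sign not controlled by its magnitude, so a naive $|\mu|^{-1-2s}$ barrier need not be an $\mathcal{M}$-supersolution; the rescaling by a large $\sigma$, which drives $\sLapx\bar w/\bar w$ down to $O(\sigma^{-2s})$ below the fixed coercivity constant $\delta$, is what makes the comparison close, and $1+2s$ is exactly the borderline rate at which this is possible (a faster-decaying barrier has $|\sLapx\bar w|$ dominating $\bar w$ with the wrong sign). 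The remaining technical point — legitimizing the equation for $\Phi''$ and the global bounds on $\Phi'',\Phi'''$ when $f$ is only $C^2$ — is routine, e.g.\ via the Caffarelli--Silvestre extension or mollification of $f$.
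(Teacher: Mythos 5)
Your proof is correct, and it takes a genuinely different route from the paper's. Both arguments start from the same twice-differentiated equation for $\Phi''$ and both close via the same nonlocal maximum-principle contradiction at a putative negative global minimum, exploiting that $-f'(\Phi(\mu))\geq\delta>0$ for $\abs{\mu}$ large. The difference is in the barrier. The paper compares $\pm\Phi''$ with $Mv_T'$, where $v_t$ is the Cabr\'e--Sire almost-explicit layer solution: since $v_t'$ solves the linearized equation $\sLapx v_t'=f_t'(v_t)v_t'$ with $f_t'(\pm1)=-1/t$, the ratio $\sLapx(Mv_t')/(Mv_t')$ is read off from that equation and made small by taking $t=T$ large, and the drift term is absorbed using the known $\abs{\mu}^{-2-2s}$ decay of $v_t''$. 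You instead use the explicit algebraic barrier $A(\sigma^2+\mu^2)^{-(1+2s)/2}$ and make the analogous ratio small by the dilation parameter $\sigma$, at the cost of proving the sharpened tail estimate $\abs{\sLapx\psi}\leq C(1+\abs{t})^{-1-2s}$ for $\psi(t)=(1+t^2)^{-(1+2s)/2}$; this is a correct and easy strengthening of Lemma \ref{decay1} (replace the $\norm[L^\infty]{u}R^{-2s}$ tail bound by $\norm[L^1]{u}R^{-1-2s}$ plus the local term $u(x)R^{-2s}$). What your approach buys is self-containedness --- no appeal to the precise asymptotics of $v_t'$ and $v_t''$ imported from \cite{cabre-sire2} --- and a transparent explanation, which you articulate, of why $\abs{\mu}^{-1-2s}$ is exactly the borderline decay rate for which such a comparison can close. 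What the paper's choice buys is that no estimate of $\sLapx$ applied to the barrier is needed at all, since the barrier satisfies an equation. The only points to tighten in a final write-up are the ones you already flag: justifying $\Phi\in C^3$ with bounded third derivative so that the equation for $\Phi''$ and the quantity $\sLapx\Phi''$ are legitimate (the paper is equally silent on this), and the decay $\Phi''(\mu)\to0$ needed so the infimum is attained, which your interpolation inequality supplies.
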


\begin{cor}\label{Phi''}
For $1/2\leq{s}<1$, there exists a constant $C_\Phi=C_\Phi(s)$ such that
$$\sup_{\mu\in\R}\,\abs{\mu^i\Phi^{(j)}(\mu)}\leq{C}_\Phi$$
for $0\leq{i}\leq{j}$ and $1\leq{j}\leq2$.
Equivalently, if $\Phi_\alpha(\mu)=\Phi(\alpha^{-1}\mu)$, then
$$\sup_{{\mu}\in\R}\,\abs{\mu\Phi_\alpha'(\mu)},\,\sup_{{\mu}\in\R}\,\abs{\mu^2\Phi_\alpha''(\mu)}\leq{C_\Phi},$$
$$\sup_{{\mu}\in\R}\,\abs{\Phi_\alpha'(\mu)},\,\sup_{{\mu}\in\R}\,\abs{\mu\Phi_\alpha''(\mu)}\leq{C_\Phi}\alpha^{-1},$$
and
$$\sup_{{\mu}\in\R}\,\abs{\Phi_\alpha''(\mu)}\leq{C_\Phi}\alpha^{-2}.$$
\end{cor}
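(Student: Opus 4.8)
The plan is to deduce everything from the pointwise decay estimates already in hand. Theorem \ref{1Dexist} gives $0<-\Phi'(\mu)=O(|\mu|^{-1-2s})$ and the preceding Proposition gives $\Phi''(\mu)=O(|\mu|^{-1-2s})$ as $|\mu|\to\infty$; moreover $\Phi\in C^2(\R)$, so $\Phi'$ and $\Phi''$ are continuous and hence bounded on every compact interval. The corollary is then a bookkeeping exercise.

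First I would handle the cases $(i,j)$ with $0\le i\le j$, $1\le j\le2$, namely $(0,1),(1,1),(0,2),(1,2),(2,2)$. For $|\mu|$ large, $|\mu^i\Phi^{(j)}(\mu)|=O(|\mu|^{i-1-2s})$ in every case (using the $j=1$ decay when $j=1$ and the $j=2$ decay when $j=2$). Since $i\le j\le2$ and $s\ge1/2$, the exponent satisfies $i-1-2s\le 2-1-2s=1-2s\le0$, so each such product stays bounded as $|\mu|\to\infty$. Combined with boundedness of $\Phi^{(j)}$ on compact sets, each $\sup_{\mu\in\R}|\mu^i\Phi^{(j)}(\mu)|$ is finite; take $C_\Phi$ to be the maximum of these finitely many suprema. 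This is the only place the sub-criticality hypothesis $s\ge1/2$ is used: it is needed precisely so that $\mu^2\Phi''(\mu)$, corresponding to $(i,j)=(2,2)$, remains bounded.

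For the rescaled profile I would differentiate $\Phi_\alpha(\mu)=\Phi(\alpha^{-1}\mu)$, obtaining $\Phi_\alpha'(\mu)=\alpha^{-1}\Phi'(\alpha^{-1}\mu)$ and $\Phi_\alpha''(\mu)=\alpha^{-2}\Phi''(\alpha^{-1}\mu)$, and then rewrite each target quantity using the substitution $\nu=\alpha^{-1}\mu$, which ranges over all of $\R$:
$$\mu\Phi_\alpha'(\mu)=\nu\Phi'(\nu),\quad \mu^2\Phi_\alpha''(\mu)=\nu^2\Phi''(\nu),\quad \mu\Phi_\alpha''(\mu)=\alpha^{-1}\nu\Phi''(\nu),$$
$$\Phi_\alpha'(\mu)=\alpha^{-1}\Phi'(\nu),\quad \Phi_\alpha''(\mu)=\alpha^{-2}\Phi''(\nu).$$
Applying the bounds from the first part (for the pairs $(1,1)$, $(2,2)$, $(1,2)$, $(0,1)$, $(0,2)$ respectively) yields exactly the asserted estimates $C_\Phi$, $C_\Phi$, $C_\Phi\alpha^{-1}$, $C_\Phi\alpha^{-1}$, $C_\Phi\alpha^{-2}$, with the same constant $C_\Phi$.

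There is essentially no serious obstacle: the substance lies entirely in Theorem \ref{1Dexist} and the Proposition above it, and the rest is routine. The one point deserving a moment's care is verifying that $i-1-2s\le0$ in all five cases, which is where $s\ge1/2$ is invoked.
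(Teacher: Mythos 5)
Your proposal is correct and is exactly the routine deduction the paper intends: the paper gives no separate proof of this corollary, treating it as an immediate consequence of the decay $\Phi'(\mu),\Phi''(\mu)=O(|\mu|^{-1-2s})$ together with continuity on compacta and the chain rule for the rescaling $\Phi_\alpha(\mu)=\Phi(\alpha^{-1}\mu)$. Your bookkeeping of the exponents $i-1-2s\le 1-2s\le 0$ (with $s\ge 1/2$ needed only for the pair $(i,j)=(2,2)$) and the substitution $\nu=\alpha^{-1}\mu$ are both accurate.
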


The proof relies on a comparison with the almost-explicit layer solution \cite{cabre-sire2}, see also \cite{gui-zhao}. For $t>0$ and $\mu\in\R$, define
$$p_t(\mu)=\dfrac{1}{\pi}\int_0^\infty\!\cos(\mu{r})e^{-tr^{2s}}\,dr$$
and
$$v_t(\mu)=-1+2\int_{-\infty}^{\mu}\!p(t,r)\,dr.$$
They are smooth functions on $\R$ and $v_t$ is a layer solution of
$$\sLapx{v_t}(\mu)=f_t(v_t(\mu)), \quad \forall \mu\in\R$$
where $f_t\in C^2([-1,1])$ is an odd bistable nonlinearity satisfying $f_t(\pm1)=-\dfrac1t$. Moreover, the asymptotic behaviors of $v_t'$ and $v_t''$ are given by
$$\lim_{\abs{\mu}\to\infty}\abs{\mu}^{1+2s}v_t'(\mu)=\dfrac{4ts\Gamma(2s)\sin(\pi{s})}{\pi}>0$$
$$\lim_{\mu\to\pm\infty}\abs{\mu}^{2+2s}v_t''(\mu)=\mp\dfrac{4t^{1-\frac1s}s(1+2s)\Gamma(2s)\sin{\pi{s}}}{\pi}.$$

\begin{proof}[Proof of Proposition \ref{Phi''}]
Differentiating (\ref{1D}) twice, we see that $\Phi''$ satisfies
$$\sLap{\Phi''(\mu)}-k\Phi'''(\mu)=f'(\Phi(\mu))\Phi''(\mu)+f''(\Phi(\mu))(\Phi'(\mu))^2.$$
Let $w_{M,t}(\mu)=Mv_t'(\mu)+\Phi''(\mu)$. Then
\begin{equation*}\begin{split}
&\quad\ \sLap{w_{M,t}}(\mu)-kw_{M,t}'(\mu)+\dfrac4tw_{M,t}(\mu)\\
&=Mv_t'(\mu)\left(\dfrac2t+f_t'(v_t(\mu))\right)+M\left(\dfrac{v_t'(\mu)}{t}-kv_t''(\mu)\right)\\
&\quad+\left(\dfrac{Mv_t'(\mu)}{t}+f''(\Phi(\mu))(\Phi'(\mu))^2\right)+\Phi''(\mu)\left(\dfrac4t+f'(\Phi(\mu))\right)
\end{split}\end{equation*}
Using the facts that $f_t'(\pm1)=-\dfrac1t$, $f'(\pm1)<0$, $\displaystyle\lim_{\mu\to\pm\infty}v_t(\mu)=\pm1$ and $\displaystyle\lim_{\mu\to\pm\infty}\Phi(\mu)=\mp1$, we can find large $T\geq1$ and $\tilde{r}_1\geq1$ such that
$$\dfrac2T+f_T'(v_T(\mu))>0 \quad \text{ and } \quad \dfrac4T+f'(\Phi(\mu))<0.$$
By the positivity of $v_T'(\mu)$, the boundedness of $f''$ and the asymptotic behaviors
$$v_T'(\mu)=O\left(\abs{\mu}^{-1-2s}\right), \quad v_T''(\mu)=O\left(\abs{\mu}^{-2-2s}\right) \text{ and } \quad \Phi'(\mu)=O\left(\abs{\mu}^{-1-2s}\right),$$
there exist large numbers $M_1\geq1$ and $\tilde{r}_2\geq\tilde{r}_1$ such that if $M\geq M_1$, then
$$\dfrac{v_T'(\mu)}{T}-kv_T''(\mu)>0 \quad \text{ and } \quad \dfrac{M_1v_T'(\mu)}{T}+f''(\Phi(\mu))(\Phi'(\mu))^2>0, \quad \forall \abs{\mu}\geq\tilde{r}_2.$$
Therefore, for $M\geq M_1$ and $x\in\set{\abs{\mu}\geq\tilde{r}_2}\cap\set{\Phi''<0}$ there holds
$$\sLapx{w_{M,T}(\mu)}-kw_{M,T}'(\mu)+\dfrac4tw_{M,T}(\mu)>0.$$
Since $v_T'>0$, there exists $M_2\geq M_1$ such that
$$w_{M_2,T}(\mu)=M_2v_T'(\mu)\geq1>0, \quad \forall \abs{\mu}\leq\tilde{r}_2+1.$$

Now we argue by maximum principle that $w_{M_2,T}(\mu)\geq0$ for all $\mu\in\R$. Suppose on the contrary that
$$\inf_{\mu\in\R}w_{M_2,T}(\mu)<0.$$
Since $w_{M_2,T}(\mu)$ decays as $\abs{\mu}\to\infty$, the infimum is attained at some $\tilde{\mu}\in\R$. But then
$$\sLapx{w_{M_2,T}}(\tilde{\mu})<0, \quad w_{M_2,T}'(\tilde{\mu})=0 \quad \text{ and } \quad w_{M_2,T}(\tilde{\mu})<0,$$
which yields $$\sLapx{w_{M_2,T}}(\tilde{\mu})-kw_{M_2,T}'(\tilde{\mu})+\dfrac{4}{T}w_{M_2,T}(\tilde{\mu})<0.$$ On the other hand, we see that $\abs{\tilde{\mu}}>\tilde{r}_2+1$ by the choice of $M_2$ and $\Phi''(\tilde{\mu})<0$ by the definition of $w_{M_2,T}$, giving
$$\sLapx{w_{M_2,T}}(\tilde{\mu})-kw_{M_2,T}'(\tilde{\mu})+\dfrac{4}{T}w_{M_2,T}(\tilde{\mu})>0,$$ a contradiction. Hence,
$$M_2v_T'(\mu)+\Phi''(\mu)\geq0, \quad \forall \mu\in\R.$$

We can now repeat the whole argument, replacing $\Phi''(\mu)$ by $-\Phi''(\mu)$, to obtain
$$M_2v_T'(\mu)-\Phi''(\mu)\geq0, \quad \forall \mu\in\R.$$
Now, for $\mu\neq0$,
$$\abs{\Phi''(\mu)}\leq M_2v_T'(\mu)\leq C\abs{\mu}^{-1-2s}$$
for some constant $C$. This finishes the proof.
\end{proof}

\end{document}